\newtheorem{thm}{Theorem}
\newtheorem{cor}{Corollary}
\newtheorem{lem}{Lemma}
\newtheorem{conj}{Conjecture}
\newtheorem{defn}{Definition}
\newtheorem{prop}{Proposition}
\numberwithin{equation}{section} \numberwithin{thm}{section}
\numberwithin{lem}{section}
\numberwithin{problem}{section}
\numberwithin{prop}{section}
\numberwithin{cor}{section}
\numberwithin{conj}{section}
\newcommand{\Z}{\mathbb Z}
\newcommand{\E}{\mathbb E}
\newcommand{\Po}{\mathbb{P}}
\renewcommand{\le}{\leqslant}
\renewcommand{\ge}{\geqslant}
\def\mod{\mathop{\rm mod}\nolimits}
\begin{document}

\title{Sidon basis}
\author{Javier Cilleruelo}
\address{Instituto de Ciencias Matem\'aticas (ICMAT) and Departamento de Matem\'aticas\\
Unversidad Aut\'onoma de Madrid\\
Madrid 28049\\
Espa\~na.
}
\email{franciscojavier.cilleruelo@uam.es}

\date{\today}


\begin{abstract} Erd\H{o}s conjectured the existence of an infinite Sidon sequence of positive integers which is also an asymptotic basis of order $3$.
We make progress towards this conjecture in several directions. First we prove the conjecture for all cyclic groups $\Z_N$ with $N$ large enough.

In second place we prove by probabilistic methods that there is an infinite  $B_2[2]$ sequence which is an asymptotic basis of order $3$.

Finally we prove that for all $\varepsilon >0$ there is a Sidon sequence which is an asymptotic basis of order $3+\varepsilon $,
that is to say, any positive sufficiently large integer $n$ can be written as a sum of $4$ elements of the sequence, one of them smaller than $n^{\varepsilon}$.
\end{abstract}

\maketitle
\section{Introduction}
 A sequence of positive integers $A$ is a Sidon basis of order $h$ if all the sums $a+a',\ a\le a',\ a,a'\in A$ are distinct (Sidon property) and
 if  any positive integer $n$, large enough, can be written as a sum of $h$ elements of $A$ (asymptotic basis of order $h$).
 It is not difficult to prove that there cannot be a Sidon basis of order $2$, however Erd\H os stated the following (\cite{E2}, \cite{ESS}, \cite{ESS2}):
 \begin{conj}\label{H}
There is a Sidon  basis of order $3$.
\end{conj}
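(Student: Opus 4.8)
\smallskip
\noindent\textbf{A strategy towards Conjecture~\ref{H}.}\quad
The plan is to deduce the infinite statement from its cyclic analogue announced above, by amalgamating infinitely many dilated finite pieces into a single infinite set. The cyclic result provides, for every sufficiently large $N$, a Sidon set $S_N\subseteq\Z_N$ with $|S_N|\asymp\sqrt N$ and $S_N+S_N+S_N=\Z_N$; read as a set of integers in $\{0,1,\dots,N-1\}$, such an $S_N$ is automatically a Sidon set in $\Z$ each of whose nonzero differences is represented at most once. The infinite set will have the shape
\[
 A=\bigcup_{k\ge1}\bigl(r_k+S_{N_k}\bigr),\qquad S_{N_k}\subseteq\{0,1,\dots,N_k-1\},
\]
with $N_k\to\infty$ ranging over admissible moduli and translates $r_1<r_2<\cdots$ to be calibrated. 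I would analyse the two competing requirements on $(r_k,N_k)$ separately and only then reconcile them.

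First, the Sidon property of $A$. A relation $a_1+a_2=a_3+a_4$ with $a_i$ in the $k_i$-th block is, after comparing orders of magnitude, forced to have matching top scales; what remains is a relation inside one block or one straddling exactly two blocks, and each is killed by the Sidon property of the individual $S_{N_k}$ together with the uniqueness of their difference representations. This step is routine \emph{provided} the translates are sufficiently lacunary, say $r_{k+1}\ge C(r_k+N_k)$ and with the pairwise sums $r_i+r_j$ kept well separated on the scale $\max_i N_i$; the cleanest way to secure this is to let the centres $\{r_k\}$ themselves form a sparse Sidon set.

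Second --- and this is the heart of the matter --- the asymptotic basis property. For large $n$ lying between consecutive blocks one needs $n=a_1+a_2+a_3$ with each $a_i\le n$, hence with all $a_i$ drawn from blocks of index at most the relevant $k$; were the $r_k$ to grow super-polynomially this would be impossible, since $A+A+A$ would be far too thin in that range. So the calibration must keep the growth of $r_k$ merely moderate, $r_{k+1}\asymp Cr_k$ or at worst $r_{k+1}\asymp r_k^{1+o(1)}$. Two structural facts make moderate growth sufficient: (i) because $S_{N_k}+S_{N_k}+S_{N_k}=\Z_{N_k}$, the integer triple-sums coming from block $k$ already meet every residue class modulo $N_k$ inside the single interval $[\,3r_k,\,3r_k+3N_k\,)$, so the triple-sums of a bounded number of consecutive blocks tile any target window near $n$ up to a residue correction; and (ii) the number of triple-sums available below $n$ exceeds $n$ by a fixed power of $n$, giving slack. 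Concretely I would represent $n$ as a coarse part, obtained by adding one element from each of a few of the top available blocks, plus a fine correction realised inside the single largest relevant $S_{N_j}$ via its ternary covering property, and check that the coarse part can be steered onto $n$ exactly for every $n$ in the gap.

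The genuine obstacle is this simultaneous calibration: growth fast enough to keep $A$ Sidon, yet slow enough to leave $A+A+A$ without gaps, using only the uniformity actually available from the cyclic theorem. If that theorem delivered the sets $S_N$ only along a multiplicatively sparse set of $N$, the coverage step would collapse; hence part of the program is that it hold for all large $N$ (as the abstract asserts) and, ideally, with uniform control on the distribution of $S_N+S_N$. I would attack the residual clash by letting two or three consecutive dilated copies overlap slightly, arranged so that the cross-block Sidon violations number $o(N_k)$ in block $k$; these are repaired by deleting one endpoint from each violation, and since the coverage argument carries a polynomial surplus of representations, deleting an $o(N_k)$-sized repair set reopens no gap. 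Showing that the overlap can indeed be made this small without the repair set eating into the coverage slack is the step I expect to be the main difficulty, and the one where the fine structure of the Bose--Chowla type sets --- not merely their size --- will have to be exploited.
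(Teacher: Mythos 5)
You are attempting to prove a statement that the paper itself does not prove: Conjecture~\ref{H} is Erd\H{o}s's open conjecture, and the author explicitly says ``while we are not able to prove this we prove some results approaching it'' (the modular version, Theorem~\ref{modular2}; a $B_2[2]$ basis of order $3$, Theorem~\ref{Main2}; and a Sidon basis of order $3+\varepsilon$, Theorem~\ref{Main1}). So there is no proof in the paper to compare against, and your text is in any case a programme, not a proof --- you say yourself that the decisive step is ``the one where the fine structure \dots will have to be exploited.''

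The concrete gap is the cross-block Sidon property, and it is not a technicality that a deletion argument of size $o(N_k)$ can absorb. The finite sets supplied by Theorem~\ref{modular2} (or by the Ruzsa/Erd\H{o}s--Tur\'an constructions underlying it) are \emph{dense} Sidon sets: $|S_{N}|\asymp\sqrt{N}$, so the difference set of $S_{N}$, read in the integers, occupies a positive proportion of $[-N,N]$ (for perfect-difference-set type constructions, essentially all of it). Consequently, for two blocks $r_j+S_{N_j}$ and $r_k+S_{N_k}$ with $N_j\le N_k$, the number of coincidences between the two difference sets --- i.e.\ of quadruples $a_1+a_2=a_3+a_4$ straddling the blocks --- is of order $N_j$, not $o(N_k)$, no matter how you choose the translates $r_j,r_k$; translation does not change difference sets. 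Repairing these forces you to delete a set of elements whose size is comparable to the blocks themselves (each element sits in only $\asymp\sqrt{N}$ differences, so killing $\asymp N_j$ collisions costs $\gtrsim\sqrt{N_j}$ deletions per pair of blocks, summed over all earlier blocks), and this destroys precisely the covering surplus you are counting on. This tension between density (needed for $A+A+A$ to have no gaps at order $3$) and the rigidity of difference sets of dense Sidon sets is exactly why the conjecture is open; it is also why the paper retreats either to $\Z_N$ (where wrap-around removes the tiling problem), to $B_2[2]$ instead of Sidon, or to order $3+\varepsilon$, in each case via a probabilistic construction with a deletion step whose cost is controlled by sunflower-type counting rather than by a block amalgamation. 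A second, smaller issue: Theorem~\ref{modular2} gives covering of residues in $\Z_{N}$, so the integer triple sums of one block cover each residue class mod $N_k$ somewhere in a window of length $3N_k$, not every integer in that window; your ``coarse plus fine'' steering therefore needs quantitative equidistribution of the representations (as in the paper's proof of Theorem~\ref{modular2} via Proposition~\ref{g}), which you invoke only implicitly.
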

While we are not able to prove this we prove some results approaching it. The first one is the modular version of Conjecture \ref{H}.
\begin{thm}\label{modular2}For  all $N$ large enough, the cyclic group $\Z_N$ contains a Sidon set $S\subset \Z_N$ which is  a basis of order $3$ in $\Z_N$.
\end{thm}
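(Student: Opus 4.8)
The plan is to realize $S$ as an explicit algebraic Sidon set of nearly maximal size — the Bose construction — and to translate "basis of order $3$" into a statement about representing every element of a finite field as a product of three linear forms, which can then be settled by Weil's bound. Concretely, fix a prime power $q$ and a generator $\theta$ of the cyclic group $\mathbb{F}_{q^2}^{\times}$, and set
\[
S \;=\; \bigl\{\, \log_{\theta}(\theta+a) \;:\; a\in\mathbb{F}_q \,\bigr\}\ \subseteq\ \Z_{q^2-1},
\]
so $|S|=q\sim\sqrt{q^2-1}$. That $S$ is a Sidon set is standard: a relation $\log_{\theta}(\theta+a)+\log_{\theta}(\theta+b)=\log_{\theta}(\theta+c)+\log_{\theta}(\theta+d)$ forces $(\theta+a)(\theta+b)=(\theta+c)(\theta+d)$ in $\mathbb{F}_{q^2}$, and comparing coordinates in the $\mathbb{F}_q$-basis $\{1,\theta\}$ gives $a+b=c+d$ and $ab=cd$, hence $\{a,b\}=\{c,d\}$. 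Since $\theta^{t}$ runs through all of $\mathbb{F}_{q^2}^{\times}$ as $t$ runs through $\Z_{q^2-1}$, the assertion "$S$ is a basis of order $3$ of $\Z_{q^2-1}$" is exactly equivalent to
\[
\mathbb{F}_{q^2}^{\times}\;=\;\bigl\{\,(\theta+a)(\theta+b)(\theta+c)\;:\;a,b,c\in\mathbb{F}_q\,\bigr\}.
\]

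Next I would turn this into an estimate for a character sum. Fix $z\in\mathbb{F}_{q^2}^{\times}$ and single out one factor $\theta+a$, which is always a unit since $\theta\notin\mathbb{F}_q$; it then suffices to find $a\in\mathbb{F}_q$ with $z(\theta+a)^{-1}\in W:=\{(\theta+b)(\theta+c):b,c\in\mathbb{F}_q\}$. Writing the minimal polynomial of $\theta$ as $x^{2}-ux-v$ and expanding in $\{1,\theta\}$, one checks that $w_{1}\theta+w_{0}\in W$ if and only if $(w_{1}-u)^{2}-4(w_{0}-v)$ is a square in $\mathbb{F}_q$ (zero allowed), and that the coordinates of $z(\theta+a)^{-1}$ are explicit rational functions of $a$ with common denominator $a^{2}+ua-v$ — which never vanishes on $\mathbb{F}_q$, because its discriminant is again $u^{2}+4v$, a non-square. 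Clearing this (square) denominator, the condition "$z(\theta+a)^{-1}\in W$" becomes "$F_{z}(a)$ is a square in $\mathbb{F}_q$", where $F_{z}$ is an explicit polynomial in $a$ of degree exactly $4$ whose leading coefficient is the discriminant $u^{2}+4v$ of the minimal polynomial of $\theta$, a non-square. So everything reduces to proving: \emph{for every $z\neq 0$ there is some $a\in\mathbb{F}_q$ with $F_{z}(a)\in(\mathbb{F}_q^{\times})^{2}\cup\{0\}$.}

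I would prove this with Weil's bound. If $F_{z}$ is squarefree, then $\bigl|\sum_{a\in\mathbb{F}_q}\chi(F_{z}(a))\bigr|\le 3\sqrt{q}$ for the quadratic character $\chi$ (extended by $\chi(0)=0$), so the number of $a$ with $\chi(F_{z}(a))=1$ is at least $(q-3\sqrt{q}-4)/2>0$ once $q$ is large, and we are done. Because $\deg F_{z}=4$ and its leading coefficient is a fixed non-square, the only way this can fail is $F_{z}=\mu\cdot h(a)^{2}$ with $\mu$ a non-square and $h$ a quadratic; if $h$ splits (or is the square of a linear form) its $\mathbb{F}_q$-roots already give a good $a$, so the one genuinely problematic configuration is $F_{z}=(\text{non-square})\cdot(\text{irreducible quadratic})^{2}$. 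Ruling this out by direct inspection of the coefficients of $F_{z}$ (equivalently, showing no $z\in\mathbb{F}_{q^{2}}^{\times}$ makes the "discriminant in $b$" into a perfect square with irreducible radical for every $a$) is the step I expect to be the main obstacle, together with the finitely many exceptional $z$ to be checked by hand.

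Finally, to pass from $N=q^{2}-1$ to \emph{all} large $N$: given $N$, choose a prime power $q$ with $n_{0}:=q^{2}-1\le N$ and $N-n_{0}=O(\sqrt{N})$, which is possible since the numbers $q^{2}-1$ have gaps $O(\sqrt{N})$. The Bose set $B\subseteq\{0,\dots,n_{0}-1\}$ is a Sidon set of integers whose threefold sumset meets every residue class modulo $n_{0}$; I would place it inside $\Z_{N}$ and adjoin a short auxiliary Sidon set $B'$ of size $O(N^{\eps})$, supported on a well-separated interval and chosen so that $B\cup B'$ is still Sidon in $\Z_{N}$, whose role — together with $B$ — is to represent the $O(\sqrt{N})$ residue classes not yet covered. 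Verifying that the glued set remains Sidon modulo $N$ (this is where the slack between $n_{0}$ and $N$ is used) and that its threefold sumset is now all of $\Z_{N}$ is, after the Weil estimate, the other place where real care is required; alternatively one could bypass the padding step by running the whole construction directly in $\Z_{N}$ with a Singer-type Sidon set adapted to $N$.
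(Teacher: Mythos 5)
Your route (Bose--Chowla set in $\Z_{q^2-1}$, Weil's bound, then pad up to general $N$) is genuinely different from the paper's. For all large $N$ the paper picks a prime $p \equiv 1 \pmod 3$ with $4p^2 < N < 5p^2$ and takes the Erd\H{o}s--Tur\'an Sidon set $A = \{x + (x^2)_p\cdot 2p : 0 \le x < p\}$, which lies in $[0,2p^2) \subset [0,N/2)$ and is therefore automatically Sidon in $\Z_N$; coverage is proved by an equidistribution theorem of Granville--Shparlinski--Zaharescu applied to the quadric $x_1^2+x_2^2+(x_1+x_2-r_1)^2 \equiv r_2 \pmod p$. (The Bose--Chowla/Ruzsa flavour of argument is closer to the paper's Theorem \ref{modular}, which only gives infinitely many $N$, via a point count on an elliptic curve.) Beyond the gap you yourself flag in the Weil step --- ruling out $F_z = (\text{non-square})\cdot(\text{irreducible quadratic})^2$ uniformly in $z$ is not a formality, and it is exactly the kind of degeneracy analysis that tends to eat such arguments --- the padding step at the end has a more fundamental flaw.

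With $n_0 = q^2-1 \le N$ and $N - n_0 = O(\sqrt N)$ you have $N < 2n_0 - 2$ once $N$ is large, so pair sums $b+b'$ of elements of $B \subset [0,n_0)$ can exceed $N$ and wrap around; a coincidence $b_1+b_2 = b_3+b_4+N$ is \emph{not} forbidden by the Sidon property of $B$ modulo $n_0$ (reducing mod $n_0$ only gives $b_1+b_2 \equiv b_3+b_4 + (N-n_0)$, with $N-n_0\not\equiv 0$), so $B$ need not be Sidon in $\Z_N$. Similarly, ``$B+B+B$ meets every residue class modulo $n_0$'' tells you essentially nothing about residues modulo $N$: to cover $r \pmod N$ you need some threefold sum to equal one of the specific integers $r,\ r+N,\ r+2N$ lying in $[0,3n_0-3]$, not merely some integer $\equiv r \pmod{n_0}$, so the estimate ``$O(\sqrt N)$ classes not yet covered'' is unsupported and the auxiliary set $B'$ has no well-defined job to do. The paper avoids both problems at once by building in $\Z_N$ from the start: choosing $4p^2<N$ keeps $A\subset[0,N/2)$ so no pair sum wraps, and choosing $N<5p^2$ lets it establish coverage directly at modulus $N$ (every integer of the form $r_1 + 2p\,r_2$ in a window of length $5p^2>N$ is representable), rather than trying to transport a coverage statement from a smaller modulus.
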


  We will proof this theorem in section \S 2.
An important ingredient in the proof of Theorem \ref{H} is a result  of Granville, Shparlinski and Zaharescu \cite{G} about distributions in the s-dimensional torus of points coming from  curves in $\mathbb F_p^r$. We start section \S 2 with a weaker theorem that has a cleaner proof and  is  used in the proof of Theorem \ref{Main2}.

Theorem \ref{Main2} is concerned with $B_2[g]$ sequences, a natural generalization of Sidon sequences.
\begin{defn}
A sequence of positive integers $A$ is a $B_2[g]$ sequence if any integer $n$ has at most $g$ representations of the form $n=a+a',\ a\le a',\  a,a'\in A$.
The $B_2[1]$ sequences are just the Sidon sequences.
\end{defn}

Erd\H os claimed in \cite{E2} that there exists a $B_2[g]$ sequence of positive integers
which is an asymptotic basis of order $3$ for some $g$ and he asked for the minimum possible $g$ (see also \cite{K}).
 While Conjecture \ref{H}  would imply that the minimum is $g=1$, we prove that $g\le 2$.
\begin{thm}\label{Main2}
There exists a $B_2[2]$ sequence of positive integers which is an asymptotic basis of order $3$.
\end{thm}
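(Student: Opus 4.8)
The plan is to construct the desired $B_2[2]$ sequence by a probabilistic argument, combining a random Sidon-like core with a carefully chosen ``thin'' auxiliary set that repairs the basis property. First I would set up the probability space: let $A$ be a random subset of $\NN$ where each integer $n$ is included independently with probability $p_n \asymp n^{-1/3}\log^{O(1)} n$ (this is the natural density for a set where sums $a+a'$ have expected multiplicity bounded and triple sums cover $[1,N]$). With this choice, the expected number of representations of $N$ as $a_1+a_2+a_3$ is of order $\sum_{a_1+a_2+a_3=N} p_{a_1}p_{a_2}p_{a_3} \asymp \log^{O(1)} N \to \infty$, so a second-moment/Janson-type concentration inequality should give that, almost surely, every sufficiently large $N$ has such a representation. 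The delicate point is that a purely random set at this density is \emph{not} Sidon: the expected number of coincidences $a_1+a_2 = a_3+a_4$ with $\{a_1,a_2\}\neq\{a_3,a_4\}$ up to $N$ is of order $N\cdot (N^{-1/3})^4 \cdot N = N^{2/3}\to\infty$.

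The key idea to overcome this is to allow a slightly larger representation function — i.e.\ settle for $B_2[2]$ rather than $B_2[1]$ — and to remove the rare ``bad'' configurations. Concretely, I would estimate the number of ``triple collisions'' (solutions of $a_1+a_2=a_3+a_4=a_5+a_6$ with the pairs genuinely distinct) and show its expectation is $o(N^{2/3})$, hence by Markov these are sparse; deleting one element from each such configuration destroys all representation-function values exceeding $2$ while only thinning $A$ negligibly. The nontrivial part is checking that this deletion does \emph{not} damage the asymptotic-basis-of-order-$3$ property: one must verify that every large $N$ retains at least one ``clean'' representation $N=a_1+a_2+a_3$ none of whose summands was deleted. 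Here I would use the fact that each $N$ has $\gg \log^{O(1)} N$ representations while only $o(N^{\delta})$ elements total get deleted below $N$; a union-bound / correlation estimate showing the representations of a fixed $N$ are sufficiently ``spread out'' across distinct summands then guarantees survival of one of them. This interplay — simultaneously controlling collisions from below (enough triple sums) and from above (few $B_2[2]$-violations) for the \emph{same} random set — is the main obstacle, and it is exactly where Theorem \ref{modular2} (or rather the weaker distributional theorem mentioned in the excerpt) enters: it provides the equidistribution input needed to bound the relevant exponential sums / counting functions for the probabilistic estimates to close.

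Finally I would assemble the argument: fix a realization $A$ of the random set for which (a) the number of $B_2[3]$-violations below $N$ is $o(N^{1/3})$ for all large $N$, and (b) every large $N$ has $\gg \log^{O(1)} N$ representations as a sum of three elements with pairwise-disjoint summand structure — both events holding almost surely, hence simultaneously. Remove a minimal set $D$ of integers so that $A' = A\setminus D$ is $B_2[2]$; then $|D\cap[1,N]| = o(N^{1/3})$. Since the surviving representations of each large $N$ vastly outnumber the deleted elements that could intersect them, $A'$ remains an asymptotic basis of order $3$. This yields a $B_2[2]$ sequence which is an asymptotic basis of order $3$, proving Theorem \ref{Main2}. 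I expect the bookkeeping in step (b) — proving that the representation count of a fixed $N$ stays large after deletions, with genuinely independent-enough summands — to require the most care, since it couples the global sparsity of $D$ with a local robustness statement about each individual $N$.
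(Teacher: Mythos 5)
Your overall strategy --- a random set at density roughly $n^{-2/3}$, followed by an alteration step that deletes elements involved in triple collisions to force the $B_2[2]$ property, then an argument that enough representations of each $n$ survive --- is indeed the same high-level plan as the paper's proof. But there is a genuine gap at exactly the step you flag as ``the most care''-demanding, and the way you propose to close it would not work.

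The gap is in the passage from a \emph{global} sparsity bound on the deleted set $D$ to a \emph{per-$n$} survival statement. You argue that since $|D\cap[1,N]|$ is small (you say $o(N^{1/3})$, or $o(N^\delta)$) while each large $n$ has many representations, a union bound or ``spread-out'' correlation estimate shows some representation of each $n$ survives. This does not follow: nothing in the Markov/first-moment bound on $|D|$ prevents the deleted elements from being adversarially concentrated so that, for infinitely many $n$, \emph{every} representation $n=a_1+a_2+a_3$ contains a deleted element. A representation of $n$ uses elements of size comparable to $n$, so roughly $n^{1/3}$ candidates are in play for each $n$; a set $D$ of size $o(n^{1/3})$ up to $n$ can easily hit all of them for some $n$. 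The paper instead bounds, for \emph{each} $n$ separately, the random variable $|T_n(A)|$ counting destroyed representations of $n$ --- encoded as $8$-tuples $(x_1,\dots,x_8)$ with $x_1+x_2+x_3=n$ and $x_1+x_4=x_5+x_6=x_7+x_8$ --- and proves $\sum_n \Po\bigl(|T_n(A)|>C\bigr)<\infty$. That upper-tail control is where the Erd\H{o}s--Tetali sunflower machinery (here in a new vectorial form) is essential; a union bound over $n$ of a Markov estimate for $|T_n(A)|$ would diverge. This is the missing idea in your sketch.

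Two secondary issues. First, the exponent: with $\gamma$ in $(5/8, 2/3)$ (the paper takes $\gamma=7/11$) the expected number of representations of $n$ as a sum of three elements is $\asymp n^{2-3\gamma}$, a positive power of $n$ (here $n^{1/11}$), not merely $\log^{O(1)}n$; you need this polynomial gap between the representation count and the bounded per-$n$ loss, so $\gamma=2/3$ exactly (which is what $\log^{O(1)}$ would correspond to) is too aggressive. The lower bound $\gamma>5/8$ is also needed so the deletion process removes only a summable expected number of elements, giving $B_2[2]$. Second, the role of Theorem~\ref{modular2} is different from what you describe: it is not used as an equidistribution input for counting. Rather, the random set is confined a priori to residue classes lying in a modular Sidon set $S\subset\Z_N$ that is also a basis of order $3$ in $\Z_N$; because $S$ is Sidon, the only way $a+a'=a''+a'''$ can occur with all four elements in $A$ is with matching residues, which drastically reduces the combinatorial casuistry of repeated coordinates when one computes $\E|T_n(A)|$. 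This ``modular trick,'' together with the vectorial sunflower lemma, is what makes the per-$n$ destruction count bounded and the argument close.
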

We next introduce a new generalization of basis that appears in the statement of our strongest approximation to Conjecture \ref{H}.
\begin{defn}
For any $\varepsilon,\ 0<\varepsilon< 1$ we say that $A$ is an asymptotic basis of order $h+\varepsilon$
if any sufficiently large positive integer $n$ can be written as a sum of $h+1$ elements of $A$, one of them smaller than $n^{\varepsilon}$:
$$n=a_1+\cdots +a_{h+1},\quad a_1,\dots,a_{h+1}\in A,\quad a_{h+1}\le n^{\varepsilon}.$$
We say that $A$ is a Sidon basis of order $h+\varepsilon$ if in addition it is a Sidon sequence.
\end{defn}
\begin{thm}\label{Main1}
For any $\varepsilon>0$ there exists a Sidon basis of order $3+\varepsilon$. In other words: for any $\varepsilon>0$ there exists a Sidon sequence $A$ of positive integers such that all positive integer $n$, large enough, can be written as
$$n=a_1+a_2+a_3+a_4,\quad a_1,a_2,a_3,a_4\in A,\quad a_4\le n^{\varepsilon}.$$
\end{thm}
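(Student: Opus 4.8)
The plan is to construct $A$ by the probabilistic method, the crucial point being that relaxing ``order $3$'' to ``order $3+\varepsilon$'' makes the expected number of representations of a large integer $n$ a \emph{positive power} of $n$, and that power is exactly the reserve needed to prune the random set down to a genuine Sidon set and still run a union bound over all $n$. Fix a small parameter $\eta=\eta(\varepsilon)>0$ and let $A_0$ be the random set in which each positive integer $m$ lies with probability $p_m\approx m^{-2/3-\eta}$, independently, so that almost surely $A_0(x)\approx x^{1/3-\eta}$. At this density the expected number of coincidences $a+b=c+d$ with $\{a,b\}\neq\{c,d\}$ inside $[1,x]$ is of order $x^{1/3-4\eta}=o(A_0(x))$, and the same bound holds almost surely; deleting one endpoint from each coincidence removes a set $D$ with $|D\cap[1,x]|=o(A_0(x))$ and leaves a \emph{Sidon} set $A:=A_0\setminus D$ with $A(x)=(1+o(1))A_0(x)$.

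For the basis property I would first analyse $A_0$. With
$$r(n)=\#\bigl\{(a_1,a_2,a_3,a_4)\in A_0^{4}:\ a_1+a_2+a_3+a_4=n,\ a_4\le n^{\varepsilon}\bigr\},$$
a direct computation gives $\E[r(n)]\approx n^{\,2-3\alpha+\varepsilon(1-\alpha)}=n^{\,\varepsilon/3-(3+\varepsilon)\eta}$ for $\alpha=2/3+\eta$, which is $n^{c}$ with $c=c(\varepsilon)>0$ once $\eta<\varepsilon/(9+3\varepsilon)$. I would then apply Janson's inequality to the events $\{a_i\in A_0,\ i=1,\dots,4\}$, indexed by admissible quadruples: after bounding the sum $\Delta$ of $\Pr[\text{both}]$ over \emph{dependent} pairs of these events by $O(\E[r(n)])$ — two admissible quadruples rarely share many entries at this density — Janson yields $\Pr[r(n)=0]\le\exp(-c'n^{c})$, which is summable, so by Borel--Cantelli $r(n)\ge1$ for all large $n$ almost surely.

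Next I must pass from $A_0$ to $A$. Among the quadruples counted by $r(n)$, those using at least one element of $D$ fall into two types, according to whether the deleted element plays the role of the small summand $a_4$ or of one of $a_1,a_2,a_3$; in each type, combining $|D\cap[1,y]|=O(y^{1/3-4\eta})$ with the (small, negative) powers of $n$ that bound the relevant two- and three-fold representation counts shows that the expected number of such ``spoiled'' quadruples is smaller than $\E[r(n)]$ by a factor $n^{-\Omega(\eta)}$. Hence the expected number of quadruples lying entirely in $A$ is still $(1-o(1))\E[r(n)]=n^{c-o(1)}$, and repeating the Janson/Borel--Cantelli argument gives, almost surely, a representation $n=a_1+a_2+a_3+a_4$ with all $a_i\in A$ and $a_4\le n^{\varepsilon}$ for every large $n$; together with the Sidon property of $A$ this is the theorem.

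The most delicate step is this last one: establishing \emph{uniformly in $n$} that the deletions forced by the Sidon condition kill only a lower-order fraction of the representations, with a tail bound strong enough to survive the union bound — one likely needs a second-moment or polynomial-concentration estimate for the number of spoiled quadruples rather than just Markov's inequality. This is precisely where ``$3+\varepsilon$'' rather than ``$3$'' is indispensable: for an honest basis of order $3$ the analogous expectation is only bounded (and one cannot even reach the density that makes the random construction Sidon-prunable), leaving no margin for the pruning, whereas the averaging over $a_4\le n^{\varepsilon}$ supplies a genuine power-of-$n$ cushion. An alternative, more arithmetic route would take $A$ to be an explicit dense Sidon sequence and show that $A+A+A$ meets each interval $[\,n-n^{\varepsilon},n\,]$ in a point of the shape $n-a_4$ with $a_4\in A$, via an equidistribution estimate for threefold sums of the sequence in the spirit of the theorem of Granville, Shparlinski and Zaharescu used for Theorem~\ref{modular2}; the bottleneck there is the same, namely controlling the threefold sumset finely enough.
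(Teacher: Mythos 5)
Your proposal follows the same high-level skeleton as the paper—random set of density $x^{-\gamma}$ with $\gamma$ slightly above $2/3$, Janson plus Borel--Cantelli for the lower tail of the representation count, an alteration/deletion step to enforce the Sidon property—and you correctly isolate the step where the difficulty lies. But precisely there is a genuine gap. You propose to control the spoiled quadruples by showing their \emph{expectation} is $n^{-\Omega(\eta)}$ and then to ``repeat the Janson/Borel--Cantelli argument''. Janson gives a \emph{lower}-tail bound; what is needed here is an \emph{upper}-tail bound on the number of spoiled representations, uniform in $n$ and summable. Markov from an expectation of order $n^{-\Omega(\eta)}$ only yields $\Po(\ge 1)\ll n^{-\Omega(\eta)}$, which is not summable; Chebyshev fares no better because the relevant second moment is dominated by the highly correlated configurations. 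Moreover, once you pass to $A=A_0\setminus D$ the events $\{a\in A\}$ are no longer independent (membership depends on the global Sidon structure of $A_0$), so Janson cannot be re-applied to quadruples in $A$. You acknowledge that a ``second-moment or polynomial-concentration estimate'' is likely required, but no such estimate is supplied, and it is exactly this estimate that constitutes the heart of the proof.

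The paper closes this gap with two devices you do not use. First, rather than bounding the number of spoiled representations in expectation, it obtains a \emph{deterministic} bound: one encodes each spoiled representation of $n$ as a $7$-tuple in a family $B_n(A)$, proves a vectorial variant of the Erd\H os--Rado sunflower lemma (Lemma~\ref{sunflower}), and shows (Proposition~\ref{MP2}) that with probability $1-o_m(1)$ no $B_n(A)$ contains a vectorial sunflower of $K$ petals, for any $n$. The sunflower lemma then forces $|B_n(A)|\le 7!\bigl((7^2-7+1)K\bigr)^7=O(1)$ uniformly in $n$, so $|R_n(A_{\mathrm{Sidon}})|\ge |R_n(A)|-O(1)\gg n^{\delta}-O(1)>0$ for all large $n$. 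The absence-of-sunflowers claim is itself reduced, type by type, to the absence of $K$-tuples of coordinate-disjoint vectors in auxiliary families $U_{2r},U_{3r},V_{2r},V_{3r}$, handled via Proposition~\ref{ind} and the expectation bounds of Lemma~\ref{VV2}. Second, the paper restricts the random sequence to residues in a modular Sidon basis $S\subset\Z_N$ of order $4$ (Corollary~\ref{modular3}); this ``modular trick'' collapses the otherwise unwieldy casuistry of coincidences among the coordinates in the expectation computation of Lemma~\ref{B1n}. Without a concrete substitute for the sunflower machinery (or the Kim--Vu polynomial concentration route that Kiss, Rozgonyi and S\'andor use independently), your argument does not close.
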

We mention a couple of previous related results. Deshoulliers and Plagne \cite{DP} constructed a Sidon basis of order 7 and  Kiss \cite{K2} proved the existence of a Sidon basis of order to $5$. See also note 1.1.

To prove Theorems \ref{Main2} and \ref{Main1} we will use the probabilistic method invented by Erd\H os and Renyi \cite{ER}.
 In the study of sequences satisfyng certain additive properties they considered a probabilistic space $\mathcal S(\gamma)$ of sequences of positive integers where all the events $x\in A$ are independent and $\Po(x\in A)=x^{-\gamma}$.

On the one hand an easy application of this method shows that
if $\gamma>3/4$ then almost all sequences in $\mathcal S(\gamma)$ are Sidon sequences (after we remove a finite number of elements  from the sequence).

On the other hand  Erd\H os and Tetali \cite{ET} proved that if $\gamma<1-1/h$, then almost all sequences are asymptotic basis of order $h$.
Thus, for any $\gamma$ in the interval $(3/4,4/5)$ we have that almost all sequences in $\mathcal S(\gamma)$ are simultaneously Sidon sequences and asymptotic basis of order $5$. This is the argument used in \cite{K2}.

In order to get basis of order $3+\varepsilon$ we need to take $\gamma$ close  to $2/3$.
In this case the sequences are far from being Sidon sequences since we expect  infinitely many repeated sums.
A way to circumvent this obstacle is  to remove the elements involved in such repetitions to obtain a true Sidon sequence. This general idea, which is called alteration method or deletion technique is standard in the probabilistic method (see \cite{AS}) and has been used previously in a similar context (\cite{C1,CKRV,ST}).

The main difficulty that appears when we apply the alteration method in our problem is that  we have to prevent the destruction of all the representations of infinitely many integers $n$. So we need to prove that the number of removed elements  involved in each representation is not too large.

As far as Theorem \ref{Main2} is concerned, the standard application of the probabilistic method also proves that if $\gamma > \frac{g+2}{2g+2}$ then with probability $1$ a sequence in $\mathcal S(\gamma)$ is a $B_2[g]$ sequence (after we remove a finite number of elements). Thus, if $\frac{5}{8}<\gamma<\frac 23$, a random sequence in $\mathcal S(\gamma)$ is, with probability $1$, simultaneously a $B_2[3]$ sequence and  an asymptotic basis of order three.
This result appears in \cite{AS} \S 8.6. To get a $B_2[2]$ basis of order 3 we need to use a more involved argument.

In section \S 3 we explain in more detail the strategy of the proofs and the new ingredients we introduce: the vectorial sunflowers and  the use of a modular Sidon basis.

The Sunflower Lemma was discovered by Erd\H os and Rado \cite{ERa} and has many applications.
In the probabilistic method it has been used to deal with  dependent events when each event can be identified with a set. In our proofs it is more convenient to identify each event with a vector and then we have to use a vectorial version of the Sunflower Lemma. We refer to \cite{AA} for a recent study of other variants of sunflowers.

The modular Sidon basis are used as a trick to simplify  the casuistry in the computations of the random variables appearing in the proofs. See section \S 3 for a more detailed explanation.

The proofs of Theorems \ref{Main2} and \ref{Main1} are quite similar, except that the last one is technically a little more involved.
We prove them in sections \S 4 and \S 5 respectively, however we have left to the last section all the boring calculations of the expected values of the random variables appearing in the proofs.
\subsection{Note added on  April 23, 2013}  Kiss, Rozgonyi and  Sandor \cite{KRS} have proved the existence of a Sidon sequence which is an asymptotic basis of order 4. Their result, which appeared in Arxiv only one day after the first version \cite{C2} of our preprint, has been  obtained independently from our work. They use the method of Kim and Vu \cite{KV} to control the concentration of sums of dependent variables.

\subsection{General Notation} Through the paper we will use the following notation:

 $\bullet$ $f(n)\gg g(n)$ means that there exists $C>0$ such that $f(n)>Cg(n)$ for $n$ large enough. We observe that this includes the possibility that $f(n)=0$ for a finite number of positive integers $n$.

 $\bullet$ $f(n)\ll g(n)$ means that there exists $C>0$ such that $f(n)<Cg(n)$ for all $n$.

 $\bullet$ $f(n)=o(g(n))$ means that $f(n)/g(n)\to 0$ as $n\to \infty$.

 $\bullet$ We write $o_m(1)$ to mean a quantity tending to $0$ as $m\to \infty$.

\section{The modular version of the conjecture.}
Our first theorem about modular Sidon basis is essentially contained in Theorem \ref{modular2}. However it includes the extra condition that $s_1,s_2,s_3$ are pairwise distinct, which is convenient to be used in the proof of Theorem \ref{Main2}. Furthermore the proof is short and has an amusing relation with elliptic curves.
\begin{thm}\label{modular}There exist infinitely many cyclic groups $\Z_N$ containing  a Sidon set $S\subset \Z_N$  and such that any element $x\in \Z_N$ can be written in the form \begin{equation}\label{rep}x=s_1+s_2+s_3,\quad s_i\in S\end{equation} with $s_1,s_2,s_3$ pairwise distinct.
\end{thm}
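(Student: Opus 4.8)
Here is the plan. The idea is to realise $S$ as a Bose/Singer-type Sidon set parametrised by a line in $\mathbb{F}_{q^2}$, and to reduce the ternary representability (with distinct summands) of a given residue to counting $\mathbb{F}_q$-points on a curve of the shape $y^2=(\text{quartic in one variable})$, i.e.\ to the Hasse bound for a curve of genus $\le 1$ — this is the promised appearance of elliptic curves. Fix an odd prime power $q$, a generator $\theta$ of $\mathbb{F}_{q^2}^{\ast}$ with minimal polynomial $X^2-c_1X-c_0$ over $\mathbb{F}_q$, and set $N=q^2-1$. Since $\theta\notin\mathbb{F}_q$ this quadratic is irreducible, so $c_1^2+4c_0$ is a \emph{non-square} in $\mathbb{F}_q$, in particular nonzero. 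Identify $\Z_N$ with $\mathbb{F}_{q^2}^{\ast}$ via $x\leftrightarrow\theta^x$ and let $S=\{\log_\theta(\theta+a):a\in\mathbb{F}_q\}$. If $(\theta+a)(\theta+b)=(\theta+c)(\theta+d)$ then $(X+a)(X+b)-(X+c)(X+d)\in\mathbb{F}_q[X]$ has degree $\le1$ and vanishes at $\theta\notin\mathbb{F}_q$, hence is zero and $\{a,b\}=\{c,d\}$; thus $|S|=q$ and $S$ is Sidon. Writing $\theta^x=u+v\theta$ with $u,v\in\mathbb{F}_q$, the relation $x=s_1+s_2+s_3$ with $s_i\in S$ means $\prod_{i=1}^{3}(\theta+a_i)=u+v\theta$ for some $a_i\in\mathbb{F}_q$; expanding and reducing $\theta^2,\theta^3$ modulo the minimal polynomial expresses two of the elementary symmetric functions of $a_1,a_2,a_3$ through the third, and with $s:=a_1+a_2+a_3$ as parameter the condition becomes that the monic cubic
\[
Q_{s,x}(X)=X^3+sX^2+(v-c_1^2-c_0-c_1s)X+(u-c_1c_0-c_0s)
\]
splits into linear factors over $\mathbb{F}_q$; the summands $s_1,s_2,s_3$ are pairwise distinct exactly when these roots are distinct. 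So it suffices to produce, for every $x$ and every large $q$, an $s\in\mathbb{F}_q$ for which $Q_{s,x}$ has three distinct roots in $\mathbb{F}_q$.

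Let $N_3(x)$ be the number of such good $s$ and put $T(x)=\#\{s\in\mathbb{F}_q:\Delta_x(s)\in(\mathbb{F}_q^{\ast})^2\}$, where $\Delta_x(s)=\mathrm{disc}_X Q_{s,x}$. A direct expansion shows that $\Delta_x$ is a polynomial in $s$ of degree exactly $4$ with leading coefficient $c_1^2+4c_0$. Over $\mathbb{F}_q$ with $q$ odd, a separable cubic has square discriminant iff it is irreducible or totally split, whereas a cubic of factorisation type $(1{+}2)$ has non-square discriminant and exactly one $\mathbb{F}_q$-root; hence $T(x)=N_3(x)+N_0(x)$, where $N_0(x)$ counts the irreducible $Q_{s,x}$. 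Moreover $\sum_{s\in\mathbb{F}_q}\#\{\text{roots of }Q_{s,x}\text{ in }\mathbb{F}_q\}=q$, because solving $Q_{s,x}(r)=0$ for $s$ is legitimate for every $r\in\mathbb{F}_q$: the coefficient of $s$ there is $r^2-c_1r-c_0$, the minimal polynomial of $\theta$ evaluated at $r$, which never vanishes. Counting the left-hand side by factorisation type gives $3N_3(x)+N_1(x)=q+O(1)$ and $N_3(x)+N_1(x)+N_0(x)=q+O(1)$, the $O(1)$'s coming from the at most four $s$ with $\Delta_x(s)=0$; subtracting yields $N_0(x)=2N_3(x)+O(1)$ and therefore $N_3(x)=\tfrac13 T(x)+O(1)$, with absolute implied constant.

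It then remains to bound $T(x)$ from below, and this is where the curve enters. If $\Delta_x(s)$ is not a constant multiple of the square of a polynomial in $s$, then Weil's bound for the character sum $\sum_{s\in\mathbb{F}_q}\big(\tfrac{\Delta_x(s)}{q}\big)$ — equivalently the Hasse--Weil bound for the genus-$\le1$ curve $y^2=\Delta_x(s)$ — gives $T(x)\ge \tfrac q2-3\sqrt q-O(1)$. Combined with $N_3(x)=\tfrac13T(x)+O(1)$ this forces $N_3(x)>0$ for every $q$ larger than an absolute constant, uniformly in $x$; letting $q$ run over the odd primes then proves Theorem~\ref{modular}.

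The one configuration this argument does not settle is $\Delta_x(s)=(c_1^2+4c_0)\,g(s)^2$ for some $g\in\mathbb{F}_q[X]$ (comparison of leading coefficients forces the constant to be exactly the non-square $c_1^2+4c_0$): then $\Delta_x(s)$ is a non-square for every $s$ with $g(s)\ne0$, so $Q_{s,x}$ is irreducible for almost all $s$ and $x$ fails to be a sum of three elements of $S$. I expect pinning this down to be the main obstacle. The condition "$\Delta_x=(c_1^2+4c_0)g^2$" amounts to two polynomial equations in $(u,v)$ with coefficients built from $c_0,c_1$, so it can hold for only $O(1)$ values of $x$ for a given $q$; one then has to check that no such $x$ actually occurs, either by a direct (if slightly tedious) inspection of the quartic $\Delta_x$ — e.g.\ showing it always has a root of odd multiplicity — or by exploiting the freedom to change the generator $\theta$, which replaces $S$ by another Sidon set in the same $\Z_N$. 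Once this degenerate case is ruled out (for at least an infinite family of $q$), the remaining ingredients — the Sidon property, the discriminant bookkeeping above, and the Hasse--Weil estimate — are routine.
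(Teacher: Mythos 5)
Your construction is genuinely different from the paper's. The paper uses Ruzsa's Sidon set $S=\{(x,g^x):x=0,\dots,p-2\}\subset\Z_{p-1}\times\Z_p\simeq\Z_{p(p-1)}$ and, after eliminating one variable and an ad hoc rational change of variables, lands directly on a Weierstrass cubic $U^2=4V^3+(bV+\lambda)^2$; Hasse then gives $p+O(\sqrt p)$ points, and removing the $O(1)$ degenerate solutions finishes. You instead use the Bose--Chowla/Singer set $S=\{\log_\theta(\theta+a):a\in\mathbb F_q\}\subset\Z_{q^2-1}$, parametrise representations by the sum $s=a_1+a_2+a_3$, and reduce to deciding when the cubic $Q_{s,x}$ splits, which you convert into a point count on the hyperelliptic curve $y^2=\Delta_x(s)$ with $\Delta_x$ a quartic in $s$. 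Both reductions are ``curve plus Hasse--Weil,'' but the underlying Sidon sets, the elimination, and the final curve are different; aside from some harmless sign slips in $Q_{s,x}$ (the leading term should be $-sX^2$ and the constant should be $-(u-c_1c_0-c_0s)$, and the coefficient of $s$ in $Q_{s,x}(r)$ is then the value at $r$ of the minimal polynomial of $-\theta$ rather than of $\theta$ --- irrelevant, since both are irreducible), the bookkeeping with discriminants and factorisation types is correct, as is the identity $N_3(x)=\tfrac13T(x)+O(1)$ and the leading coefficient $c_1^2+4c_0$ of $\Delta_x(s)$.

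However, there is a genuine gap, and you yourself flag it: the argument collapses precisely when $\Delta_x(s)=(c_1^2+4c_0)\,g(s)^2$ for some $g\in\mathbb F_q[s]$, since then $\Delta_x(s)$ is a non-square for every $s$ off the zero set of $g$, so $Q_{s,x}$ is never totally split and $N_3(x)=O(1)$ --- not enough to guarantee even one admissible representation. The theorem requires \emph{every} residue $x\in\Z_N$ to be representable, so it does not help to know that the bad $x$ are $O(1)$ in number; you would need to show there are exactly none (for at least an infinite family of $q$), and the proposal only sketches possible strategies (inspecting the quartic directly, or varying $\theta$) without carrying either out. This is not a cosmetic omission: it is exactly the kind of failure your reduction is vulnerable to and the paper's is not, because the paper's Weierstrass cubic has $p+O(\sqrt p)$ affine points regardless of whether the right-hand cubic in $V$ has a repeated root (singular cubics still carry $\approx p$ points), whereas a quartic that is a non-square constant times a square gives a conic with \emph{no} $\mathbb F_q$-points away from $g=0$. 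Until the degenerate case is actually excluded, the proof is incomplete.
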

\begin{proof} Ruzsa \cite{Ru} observed that for all prime $p$ and $g$ a generator of $\mathbb F_p^*$, the set $$S=\{(x,g^x):\ x=0,\dots,p-2\}$$ is a Sidon set in $\Z_{p-1}\times\Z_p.$ Since $\Z_{p-1}\times \Z_p$ is isomorphic to $\Z_{(p-1)p}$ the set $S$ provides an easy construction of a dense Sidon set in a cyclic group. We will prove that $S$ is also a basis of order $3$. In other words, that any element $(a,b)\in \Z_{p-1}\times\Z_p$ can be written as \begin{equation}\label{abr}(a,b)=(x_1,g^{x_1})+(x_2,g^{x_2})+(x_3,g^{x_3}).\end{equation} Indeed we will prove that the number of solutions of \eqref{abr} is exactly the number of points $(U,V),\ V\ne 0$ of the elliptic curve $U^2=4V^3+ (bV+g^a)^2$ in $\mathbb F_p$.

We count, for any $(a,b)\in\Z_{p-1}\times\Z_p$, the number of solutions $(x_1,x_2,x_3)$ of the system
\begin{eqnarray}\label{xxx}
x_1+x_2+x_3&\equiv& a\pmod{p-1}\\
g^{x_1}+g^{x_2}+g^{x_3}&\equiv& b\pmod p.
\end{eqnarray}
This can be written as $$g^{x_1}+g^{x_2}+g^{a-x_1-x_2}\equiv b\pmod p,$$ which is also equivalent to
\begin{equation}\label{XY}X+Y+\frac{\lambda}{XY}\equiv b \pmod p \end{equation} after the change $g^{x_1}=X,\ g^{x_2}=Y,\ g^a=\lambda$.
Now we make another change of variables:
\begin{eqnarray*}
X=\frac{2V^2}{U-bV-\lambda},\qquad
Y=-\frac{\lambda}{V}.
\end{eqnarray*}
Since $XY\ne 0$ we have to add the condition $V\ne 0,\ U\ne bV+\lambda$. With these restrictions the change of variables is bijective.
Applying this lucky change of variables in \eqref{XY} we have
\begin{eqnarray*}
\frac{2V^2}{U-bV-\lambda}-\frac{\lambda}V-\frac{U-bV-\lambda}{2V}&\equiv& b\pmod p\\
\iff \qquad \qquad \qquad\frac{2V^2}{U-bV-\lambda}&\equiv &\frac{U+bV+\lambda}{2V}\pmod p\\
\iff \qquad \qquad 4V^3+ (bV+\lambda)^2&\equiv& U^2\pmod p.
\end{eqnarray*}
Each point of this elliptic curve (except $(U,V)=(\pm \lambda,0)$) corresponds to a solution $(X,Y)$ of \eqref{XY}. By Hasse's Theorem \cite{H}  we know that the elliptic curve has $p+O(\sqrt p)$ points $(U,V)$.

 We  have to remove the solutions $(x_1,x_2,x_3)$ of \eqref{xxx} such that $x_i=x_j$ for some $i\ne j$. Suppose that $x_1=x_2$. In that case the equation \eqref{XY} is $2X+\frac{\lambda}{X^2}\equiv b\pmod p$, which is a cubic equation having at most three solutions. Thus, the number of solutions $(x_1,x_2,x_3)$ of \eqref{xxx} with some repeated coordinates is at most $9$ and the number of representations of $(a,b)$ as a sum of three pairwise distinct elements of $S$ is $p+O(\sqrt p)$.
 \end{proof}
Corollary \ref{modular3} is a byproduct of the proof above and  will be used  in the proof of Theorem \ref{Main1}.
\begin{cor}\label{modular3}There exist infinitely many cyclic groups $\Z_N$ containing  a Sidon set $S\subset \Z_N$  and such that any element $x\in \Z_N$ can be written in the form $$x=s_1+s_2+s_3+s_4,\quad s_i\in S$$ with $s_1,s_2,s_3,s_4$ pairwise distinct.
\end{cor}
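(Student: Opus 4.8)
The plan is to reuse the setup of Theorem \ref{modular} verbatim, working with Ruzsa's Sidon set $S=\{(x,g^x):x=0,\dots,p-2\}\subset\Z_{p-1}\times\Z_p$, and to count representations of a given $(a,b)$ as a sum of \emph{four} elements of $S$. First I would fix one coordinate, say write $s_4=(x_4,g^{x_4})$, and reduce the problem of representing $(a,b)$ as $s_1+s_2+s_3+s_4$ to representing $(a-x_4,b-g^{x_4})$ as a sum of three elements of $S$. By the proof of Theorem \ref{modular}, for each value of $x_4$ the number of such representations with $s_1,s_2,s_3$ pairwise distinct is $p+O(\sqrt p)$, with the implied constant absolute. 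Summing over the $p-1$ choices of $x_4$ gives $p^2+O(p^{3/2})$ quadruples $(x_1,x_2,x_3,x_4)$ solving the analogue of \eqref{xxx}, with $x_1,x_2,x_3$ already forced to be pairwise distinct.

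It then remains to subtract the quadruples in which $x_4$ coincides with one of $x_1,x_2,x_3$. By symmetry it suffices to bound the number of solutions with $x_4=x_1$ (say) and multiply by $3$. Fixing $x_4=x_1=t$, the system becomes: represent $(a-2t, b-2g^t)$ as a sum $(x_2,g^{x_2})+(x_3,g^{x_3})$ of two elements of $S$. Since $S$ is a Sidon set, for each $t$ there is at most one such representation (as an unordered pair), hence $O(1)$ ordered solutions; summing over the $p-1$ values of $t$ contributes only $O(p)$ quadruples to be removed. Therefore the number of representations of $(a,b)$ as a sum of four pairwise distinct elements of $S$ is $p^2+O(p^{3/2})$, which is positive for $p$ large. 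Since $\Z_{p-1}\times\Z_p\cong\Z_{(p-1)p}$, letting $p$ range over the primes yields infinitely many cyclic groups $\Z_N$ with the desired property.

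The one point needing a little care — and the main (minor) obstacle — is making sure the error terms are genuinely uniform in $(a,b)$: the $O(\sqrt p)$ in Theorem \ref{modular} comes from Hasse's theorem applied to the curve $U^2=4V^3+(bV+g^a)^2$, whose coefficients depend on $(a,b)$, but Hasse's bound is uniform over all elliptic curves over $\mathbb F_p$ (and the finitely many degenerate values of $(a,b)$ where the cubic in $U,V$ is singular contribute only a bounded extra error), so after summing over $x_4$ the total error is $O(p\cdot\sqrt p)=O(p^{3/2})$ uniformly. With that observed, no further calculation is required beyond what is already in the proof of Theorem \ref{modular}.
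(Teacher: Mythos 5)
Your argument is correct, but it takes a slightly different route from the paper's. The paper does not sum over all $p-1$ choices of $s_4$; it simply \emph{fixes} $s_4=(0,g^0)=(0,1)$ once and for all, applies Theorem \ref{modular} to $(a,b)-(0,1)$ to get $p+O(\sqrt p)$ representations of $(a,b)$ as $s_1+s_2+s_3+(0,1)$ with $s_1,s_2,s_3$ pairwise distinct, and then discards the representations in which some $s_i=(0,1)$, i.e. some $x_i=0$. Setting $x_1=0$ (say) forces $X=g^{x_1}=1$ in \eqref{XY}, which becomes a quadratic in $Y$; hence at most $6$ representations over the three choices of $i$ are lost, and the count stays positive. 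Fixing one summand this way sidesteps entirely the uniformity question you rightly flag: no averaging over $x_4$, hence no need to worry about how the Hasse error depends on $(a,b)$. Your averaging approach does work and yields the stronger count $p^2+O(p^{3/2})$, but at the cost of having to justify that the error in Theorem \ref{modular} is uniform in $(a,b)$ and that degenerate cubics are harmless.

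One minor inaccuracy in your uniformity discussion: it is not true that there are only ``finitely many'' (i.e.\ $O(1)$) pairs $(a,b)$ for which $U^2=4V^3+(bV+g^a)^2$ is singular; the discriminant vanishing is a polynomial condition that can be satisfied by $\Theta(p)$ pairs. What saves the argument is that for a cubic $f(V)$ of genuine degree $3$ (which $4V^3+(bV+\lambda)^2$ always is, since its leading coefficient is $4\ne 0$ and an odd-degree polynomial cannot be a square), the affine curve $U^2=f(V)$ has $p+O(\sqrt p)$ points whether or not $f$ has repeated roots: in the nonsingular case by Hasse, and in the singular (nodal or cuspidal) case the curve is rational and has $p+O(1)$ points. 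So the bound $p+O(\sqrt p)$ is uniform and your summation over $x_4$ goes through; it just requires saying this rather than appealing to ``finitely many degenerate $(a,b)$.''
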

\begin{proof}
We will see that the set $S\subset \Z_{p-1}\times \Z_{p}\simeq \Z_{(p-1)p}$ described in Theorem \ref{modular} satisfies the conditions of Corollary \ref{modular3}. From the proof of Theorem \ref{modular} we know that   the number of representations of $(a,b)$ as
\begin{equation}
(a,b)=(x_1,g^{x_1})+(x_2,g^{x_2})+(x_3,g^{x_3})+(0,1),\quad x_i\ne x_j,\ 1\le i<j\le 3
\end{equation}
is $p+O(\sqrt p)$. We observe that all these representations of $(a,b)$ satisfy the conditions of Corollary \ref{modular3} except those  with $x_i=0$ for some $i=1,2,3$.  In theses cases the equation \eqref{XY} is a quadratic equation and the number of these special representations is at most $6$ for each $(a,b)$.
\end{proof}
\subsection{Proof of Theorem \ref{modular2}}
We need the following weaker version  of a result  of Granville, Shparlinski and Zaharesku \cite{G}:
\begin{thm}\label{GSZ}Let $\mathcal C$ be a curve of degree $d$ in $\mathbb F_p^r$ which is absolutely irreducible  in $\mathbb A^r(\overline{\mathbb F}_p)$. Let $h:\mathcal C \rightarrow \mathbb A^s(\overline{\mathbb F}_p)$ be a function $h(X)=(h_1(X),\dots, h_s(X))$ where $h_i(X),\ i=1,\dots,r$ are polynomial functions.

Assume also that there exists $L=L(p)\to \infty$ such that $c_1=\cdots=c_s=0$ whenever $|c_i|\le L,\ i=1,\dots,s$ and $c_1h_1(X)+\cdots +c_sh_s(X)$ is constant along $\mathcal C$.

Under these conditions, the set
$$S=\left \{\left (\frac{h_1(X)}p,\dots, \frac{h_s(X)}p    \right ):\ X\in \mathcal C\right \}  $$
is well distributed in $\mathbb T^s$ when $p\to \infty$.
\end{thm}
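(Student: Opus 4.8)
The plan is to prove the stated equidistribution by reducing it, via the Weyl criterion, to a bound on exponential sums along the curve $\mathcal C$, and then to obtain that bound from Weil's estimate for character sums on curves over finite fields. The whole argument is, in spirit, the ``one nontrivial frequency at a time'' strategy, where the hypothesis involving $L(p)\to\infty$ is exactly what is needed to know that, for each fixed Weyl frequency, we are not in the degenerate (no cancellation) case once $p$ is large.

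First I would invoke the Weyl criterion: the family $S=S(p)$ becomes well distributed in $\mathbb T^s$ as $p\to\infty$ precisely when, for every fixed nonzero integer vector $\mathbf{c}=(c_1,\dots,c_s)$,
\[
\frac{1}{|\mathcal C(\mathbb F_p)|}\sum_{X\in\mathcal C(\mathbb F_p)} e\!\left(\frac{c_1h_1(X)+\cdots+c_sh_s(X)}{p}\right)\longrightarrow 0,
\]
where $e(t)=e^{2\pi i t}$. Since $\mathcal C$ is absolutely irreducible of degree $d$ (with $d$ fixed), the Weil / Lang--Weil estimate gives $|\mathcal C(\mathbb F_p)|=p+O_d(\sqrt p)$, so it suffices to show that the numerator is $O(\sqrt p)$, in particular $o(p)$, with an implied constant that may depend on the (bounded) degrees in play but not on $p$.

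Next, fix $\mathbf{c}\ne 0$ and put $f=c_1h_1+\cdots+c_sh_s$, a polynomial function on $\mathcal C$ whose degree is at most $\max_i\deg h_i$, a bound independent of $\mathbf{c}$ and of $p$. Because $L=L(p)\to\infty$, for all sufficiently large $p$ we have $\max_i|c_i|\le L(p)$, and then the hypothesis of the theorem forces $f$ to be non-constant along $\mathcal C$ (otherwise all the $c_i$ would vanish). At this point I would apply the Weil bound for exponential sums along an absolutely irreducible affine curve: once $p$ exceeds a threshold depending only on $d$ and $\deg f$ — which rules out the degenerate Artin--Schreier possibility $f=\varphi^{\,p}-\varphi+\mathrm{const}$ — non-constancy of $f$ yields
\[
\left|\sum_{X\in\mathcal C(\mathbb F_p)} e\!\left(\frac{f(X)}{p}\right)\right|\ll_{d,\,\deg f}\sqrt p .
\]
Since $d$ and $\deg f\le\max_i\deg h_i$ are bounded independently of $\mathbf{c}$, the implied constant does not depend on $\mathbf{c}$; combining this with the count of $|\mathcal C(\mathbb F_p)|$ makes the normalized sum $O(p^{-1/2})\to 0$, which is exactly what the Weyl criterion requires.

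The step I expect to be the main obstacle is the clean application of the Weil bound: one must be careful that all constants are uniform in the (bounded) degree $d$ and in the degrees of the $h_i$, must absorb the contribution of the singular points of $\mathcal C$ and of its points at infinity (a routine $O_d(1)$, coming from passing to the smooth projective model), and, most importantly, must use the growth condition $L(p)\to\infty$ in the right place — it is precisely what guarantees, for each fixed frequency $\mathbf{c}$, that $f$ is non-constant on $\mathcal C$ for $p$ large, hence that square-root cancellation is available. Everything else (reduction mod $p$, identifying ``constant on $\mathbb F_p$-points'' with ``geometrically constant'' for $p$ large, etc.) is standard bookkeeping.
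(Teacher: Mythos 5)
The paper does not prove Theorem~\ref{GSZ}: it is quoted (as a ``weaker version'' of the result in \cite{G}) and used as a black box to derive Proposition~\ref{g}, so there is no in-paper proof to compare against. With that caveat, your outline correctly reconstructs the standard route taken in \cite{G}: reduce equidistribution in $\mathbb T^s$ via the Weyl criterion to bounding, for each fixed nonzero integer frequency $\mathbf c$, the complete additive character sum $\sum_{X\in\mathcal C(\mathbb F_p)} e\bigl(f(X)/p\bigr)$ with $f=\sum_i c_ih_i$; count $|\mathcal C(\mathbb F_p)|=p+O_d(\sqrt p)$ by Lang--Weil; use $L(p)\to\infty$ to conclude that for $p$ large and this fixed $\mathbf c$ the hypothesis forces $f$ to be non-constant on $\mathcal C$; and then invoke Bombieri/Weil for additive characters along an absolutely irreducible curve to get $O(\sqrt p)$. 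Your treatment of the Artin--Schreier obstruction is the right one: since $\deg f$ is bounded independently of $p$, a relation $f=g^p-g+\text{const}$ with $g$ non-constant would force a pole of order divisible by $p$, which is impossible once $p$ exceeds that fixed degree, so geometric non-constancy alone suffices. Two points you should make explicit rather than leave as ``bookkeeping'': (i) the statement tacitly assumes the degree of $\mathcal C$ and the degrees of the $h_i$ are bounded uniformly in $p$ (otherwise the Weil constant and the Lang--Weil error are not uniform, and the implied constant in $O(\sqrt p)$ could swallow the main term); (ii) the hypothesis ``constant along $\mathcal C$'' must be read as geometric constancy on the absolutely irreducible model — which, as you note, coincides with constancy on $\mathcal C(\mathbb F_p)$ once $p$ is large relative to the degrees — since it is the geometric statement that feeds into the Weil bound. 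With those two clarifications your argument is sound and matches the cited source.
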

Proposition \ref{g} is a consequence of Theorem \ref{GSZ}.
\begin{prop}\label{g}Let $p\equiv 1\pmod 3$ be a prime. For any integers $r_1,r_2$, let $\mathcal C_{r_1,r_2}$ be the curve in $\mathbb F_p^2$ defined by \begin{eqnarray}\label{cu}
x_1^2+x_2^2+(x_1+x_2-r_1)^2&\equiv &r_2\pmod p.
\end{eqnarray}
The set \begin{equation}\label{Set}S_{r_1,r_2}=\left \{\left (\frac{(x_1)_p}p,\frac{(x_2)_p}p,\frac{(x_1^2)_p}{p},\frac{(x_2^2)_p}p\right ):\ (x_1,x_2)\in \mathcal C_{r_1,r_2}\right \}\end{equation} is well distributed in  $[0,1]^4$ when $p\to \infty$. In particular, given $c>0$, any box $B\subset [0,1]^4$ of size $|B|>c$ contains an element of $S_{r_1,r_2}$ if $p$ is large enough.
\end{prop}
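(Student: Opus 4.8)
The plan is to read Proposition~\ref{g} off Theorem~\ref{GSZ}. The point is that $S_{r_1,r_2}$ in \eqref{Set} is exactly the set attached by Theorem~\ref{GSZ} to the data $r=2$, $s=4$, the curve $\mathcal C=\mathcal C_{r_1,r_2}\subset\mathbb F_p^2$ of \eqref{cu}, and the polynomial map $h(x_1,x_2)=(h_1,h_2,h_3,h_4)=(x_1,x_2,x_1^2,x_2^2)$. So the whole task reduces to checking the two hypotheses of that theorem: that $\mathcal C_{r_1,r_2}$ is absolutely irreducible over $\overline{\mathbb F}_p$, and that for some $L=L(p)\to\infty$ no linear combination $c_1x_1+c_2x_2+c_3x_1^2+c_4x_2^2$ with $0<\max_i|c_i|\le L$ is constant along $\mathcal C_{r_1,r_2}$. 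Granting well-distribution of $S_{r_1,r_2}$, the final ``in particular'' assertion is a routine covering argument: a box $B\subset[0,1]^4$ with $|B|>c$ has all four side lengths $>c$, hence contains a cell of a fixed grid of mesh $<c$ (one independent of $p$), and each such cell meets $S_{r_1,r_2}$ once $p$ is large.

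First I would settle the geometry. Expanding \eqref{cu}, $\mathcal C_{r_1,r_2}$ is the plane conic
\[
Q(x_1,x_2):=2x_1^2+2x_1x_2+2x_2^2-2r_1(x_1+x_2)+(r_1^2-r_2)\equiv 0\pmod p .
\]
A short computation shows that the discriminant of this conic — the determinant of the $3\times 3$ symmetric matrix of the homogenization of $Q$ — equals $r_1^2-3r_2$, while the determinant of its quadratic part $\left(\begin{smallmatrix}2&1\\1&2\end{smallmatrix}\right)$ is $3\ne 0$ (here $p\ne 3$ because $p\equiv 1\pmod 3$). Hence $\mathcal C_{r_1,r_2}$ is a smooth conic, so absolutely irreducible, whenever $r_1^2\not\equiv 3r_2\pmod p$; and when $r_1^2\equiv 3r_2\pmod p$ it degenerates into two distinct lines meeting at $(r_1/3,r_1/3)$ whose slopes are the primitive cube roots of unity in $\mathbb F_p$ — which exist precisely because $p\equiv1\pmod3$ makes $-3$ a square, so these two lines are defined over $\mathbb F_p$. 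In the degenerate case one applies Theorem~\ref{GSZ} to each line separately and takes the union of the well-distributed sets obtained.

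The independence hypothesis is immediate in the smooth case, and this is the clean heart of the matter. If $f=c_1x_1+c_2x_2+c_3x_1^2+c_4x_2^2$ is constant along $\mathcal C_{r_1,r_2}$, then $f-\gamma$ vanishes along the conic for some constant $\gamma$; since $Q$ is irreducible and $\deg(f-\gamma)\le 2$, either $f-\gamma$ and $Q$ have no common component — impossible, as then they would meet in at most $4$ points while $\mathcal C_{r_1,r_2}$ has $p+O(\sqrt p)$ points — or $f-\gamma=\lambda Q$, and this forces $\lambda=0$ because $f$ has no $x_1x_2$ monomial whereas $Q$ has coefficient $2\ne 0$ there. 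So $f=\gamma$ and $c_1=\cdots=c_4=0$: no nonzero coefficient vector works at all, and the hypothesis holds with any $L(p)\to\infty$. On a line $x_1=\omega x_2+\beta$ in the degenerate case one instead needs a size cutoff: writing $c_1h_1+\cdots+c_4h_4$ as a polynomial in $t=x_2$, its being constant forces $c_3\omega^2+c_4\equiv 0\pmod p$ and, once this is used, a relation of the shape $c_1\omega+c_2\equiv 0\pmod p$; using $\omega^2+\omega+1\equiv 0$ each of these becomes a congruence $c^2-cc'+c'^2\equiv 0\pmod p$ with $c,c'$ among the $c_i$, and since $c^2-cc'+c'^2$ is a positive-definite integral form we get $0\le c^2-cc'+c'^2\le 3\max(|c|,|c'|)^2<p$ as soon as $\max_i|c_i|\le L(p):=\lfloor\sqrt{(p-1)/3}\rfloor$, whence $c=c'=0$. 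With this $L(p)\to\infty$ both hypotheses of Theorem~\ref{GSZ} hold in either case, and Proposition~\ref{g} follows.

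The one place I expect to have to be careful is exactly this degenerate locus $r_1^2\equiv 3r_2\pmod p$: there the monomials $1,x_1,x_2,x_1^2,x_2^2$ are no longer ``independent enough'' along the curve, and one genuinely uses both the hypothesis $p\equiv 1\pmod 3$ (to make the components honest $\mathbb F_p$-lines rather than a Galois-conjugate pair over $\mathbb F_{p^2}$, which would break absolute irreducibility) and the explicit threshold $L(p)=\lfloor\sqrt{(p-1)/3}\rfloor$ to kill the spurious linear relations. Everything else — the determinant computation, the reduction to Theorem~\ref{GSZ}, the covering argument — is routine.
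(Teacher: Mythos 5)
Your proposal is essentially correct, and for the irreducible case it takes a genuinely different and cleaner route than the paper's. You identify the degenerate locus by the determinant $r_1^2-3r_2$ of the $3\times3$ symmetric matrix of the homogenized conic, which is (up to the unit $-2$) the same as the paper's $6r_2-2r_1^2$. In the irreducible case the paper verifies the independence hypothesis of Theorem~\ref{GSZ} by a rather laborious elimination: it solves the conic for $x_1^2$, substitutes, clears denominators by multiplying through by $P(x_2)^2$, reads off the $x_2^4$-coefficient $4c_3^2(c_3^2-c_3c_4+c_4^2)$, and only then invokes the positive-definiteness of $c_3^2-c_3c_4+c_4^2$ together with the bound $L=\sqrt p/3$. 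Your Bézout argument bypasses all of that: if $f-\gamma$ vanishes on the $p+O(\sqrt p)$ points of the absolutely irreducible conic $Q=0$ then $Q\mid f-\gamma$, and since $Q$ carries an $x_1x_2$-monomial with coefficient $2$ while $f$ does not, the proportionality constant is $0$ and so $f\equiv\gamma$, giving $c_i\equiv 0\pmod p$ outright. This buys two things: the argument is structural rather than computational, and no fine-tuned threshold $L(p)$ is needed in the irreducible case — any $L<p$ does.

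For the degenerate case $r_1^2\equiv 3r_2\pmod p$ your argument coincides with the paper's (split into two lines over $\mathbb F_p$, which exist because $p\equiv1\pmod3$, then use the positive-definite form $c^2-cc'+c'^2$ and a cutoff $L\ll\sqrt p$). One small gloss: after substituting $x_1=\omega x_2+\beta$ into $f$, the $t$-coefficient is $c_1\omega+c_2+2c_3\omega\beta$, not just $c_1\omega+c_2$; you must first deduce $c_3=c_4=0$ from the $t^2$-coefficient before the linear term simplifies to $c_1\omega+c_2\equiv 0$. You do flag this with ``once this is used,'' so the logical order is correct, but it is worth spelling out that the two positive-definite congruences are handled sequentially rather than in parallel.
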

\begin{proof}
The equation \eqref{cu} can be written as
\begin{equation}\label{curve} 3(2x_1+x_2-r_1)^2+(3x_2-r_1)^2\equiv 6r_2-2r_1^2\pmod p.\end{equation}
Thus, the curve is absolutely irreducible if $6r_2-2r_1^2\not\equiv 0 \pmod p$.

In the first place we consider the case $6r_2-2r_1^2\not \equiv 0\pmod p$.

We will prove that $L=\sqrt p/3$ works for the condition of Theorem \ref{GSZ}. Suppose that \begin{equation}\label{linear}c_1x_1+c_2x_2+c_3x_1^2+c_4x_2^2=c_0\end{equation} for all $(x_1,x_2)\in \mathcal C_{r_1,r_2}$.
From \eqref{cu} we have that $$x_1^2=x_1(r_1-x_2)-x_2^2+r_1x_2+\frac{r_2-r_1^2}2.$$
Substituting $x_1^2$ in \eqref{linear} by this expression we have that
\begin{eqnarray*}c_1x_1+c_2x_2+c_3\left (x_1(r_1-x_2)-x_2^2+r_1x_2+\frac{r_2-r_1^2}2  \right )+c_4x_2^2=c_0. \end{eqnarray*}
which is equivalent to
$$(c_1+c_3(r_1-x_2))  x_1=(c_3-c_4)x_2^2-(c_3r_1+c_2)x_2+c_0+\frac{c_3(r_1^2-r_2)}2.$$
We write this in a short way as
$P(x_2)x_1=Q(x_2)$ with
\begin{eqnarray*}
P(x_2)&=&-c_3x_2+  c_1+c_3r_1.\\
Q(x_2)&=&(c_3-c_4)x_2^2-(c_3r_1+c_2)x_2+c_0+\frac{c_3(r_1^2-r_2)}2.
\end{eqnarray*}
Multiplying \eqref{linear} by $4c_3$ and completing squares we have
\begin{eqnarray*}(2c_3x_1+c_1)^2+4c_2c_3x_2+4c_3c_4x_2^2=4c_3c_0+c_1^2.\end{eqnarray*}
Multiplying by $P(x_2)^2$ and using that $P(x_2)x_1=Q(x_2)$ we have
\begin{eqnarray*}(2c_3Q(x_2)+c_1P(x_2))^2+P(x_2)^2\left (4c_2c_3x_2+4c_3c_4x_2^2-4c_3c_0-c_1^2\right )=0.\end{eqnarray*}

 This equality must be satisfied for all $x_2$ corresponding to a point $(x_1,x_2)\in \mathcal C_{r_1,r_2}$.
 Since the left hand of the equality above is a polynomial in $x_2$ of degree less than or equal to $4$, this can only be possible if  it is the zero polynomial.
 It is easy to check that the coefficient of $x_2^4$ in the polynomial is $$4c_3^2(c_3-c_4)^2 +4c_3^3c_4=4c_3^2(c_3^2+c_4^2-c_3c_4).$$
If $c_3\ne 0$ we have that $c_3^2+c_4^2-c_3c_4\equiv 0\pmod p$. The inequality
$$\left |c_3^2+c_4^2-c_3c_4\right |\le 3L^2<p$$
 implies that $c_3^2+c_4^2-c_3c_4=0\implies c_3=c_4=0$. Thus $c_3=0$ in any case.

Since $x_1$ and $x_2$ play the same role, we can proceed in the same way to deduce that $c_4=0$. Now we have to consider the possibility that
$c_1x_1+c_2x_2=c_0$ for any $(x_1,x_2)\in \mathcal C_{r_1,r_2}$. But this means that all the solutions of the curve $\mathcal C_{r_1,r_2}$ lie on that line, which is impossible unless $c_0=c_1=c_2=0$.

We have proved that the conditions of Theorem \ref{GSZ} are satisfied when $6r_2- 2r_1^2\not \equiv 0\pmod p$ and then the sets $S_{r_1,r_2}$ are well distributed in this case.

\

Assume now that $6r_2- 2r_1^2\equiv 0\pmod p$.

We observe that in this case the curve \eqref{curve} is not absolutely irreducible.
Let $\omega$ be a solution of $\omega^2+\omega+1\equiv 0 \pmod p$, which exists because $p\equiv 1 \pmod 3$.
It is easy to check that the points $(x_1,x_2)$ of \eqref{curve} are those satisfying
either $x_1=\omega x_2+(1-\omega)r_1$ or $x_1=-\omega x_2+(1+\omega)r_1$ and the curve \eqref{curve} is the union of the two lines:
\begin{eqnarray*}\mathcal C^+_{r_1}&=&\{(x_1,x_2):\ x_1=\omega x_2+(1-\omega)r_1:\ x_1,x_2\in \mathbb F_p\}\\
\mathcal C^-_{r_1}&=&\{(x_1,x_2):\ x_1=-\omega x_2+(1+\omega)r_1:\ x_1,x_2\in \mathbb F_p\}.
\end{eqnarray*} We use again Theorem \ref{GSZ} to prove that the set \eqref{Set} is well distributed when $(x_1,x_2)$ belongs to both curves.
We will do the work for the first one (for the second one the task is similar).

It is clear that $\mathcal C^+_{r_1}$ is absolutely irreducible because it has degree 1. We will prove that $L=\sqrt p/3$ satisfies the condition of Theorem \ref{GSZ}.

Suppose that there exist constants $c_0,c_1,c_2,c_3,c_4$ such that
$$c_1x_1+c_2x_2+c_3x_1^2+c_4x_2^2=c_0$$ for all $(x_1,x_2)\in \mathcal C_{r_1}^+$. In this case we would have
$$c_1(\omega x_2+(1-\omega)r_1)+c_2x_2+c_3(\omega x_2+(1-\omega)r_1)^2+c_4x_2^2=c_0$$ for all $x_2\in \mathbb F_p$ which is not possible if
the coefficient of $x_2^2$ is not $0$. So  $c_4=-c_3\omega^2$ and  using that $\omega^2+1=-\omega$ we have that
$c_4-c_3=-c_3\omega^2-c_3=c_3\omega$. Thus
$$(c_4-c_3)^2+c_3(c_4-c_3)+c_3^2=(c_3\omega)^2+c_3^2\omega+c_3^2=c_3^2(\omega^2+\omega+1)\equiv 0\pmod p.$$
On the one hand we know that $$|(c_4-c_3)^2+c_3(c_4-c_3)+c_3^2|\le (2L)^2+L(2L)+L^2\le 7L^2<p.$$
This implies that $(c_4-c_3)^2+c_3(c_4-c_3)+c_3^2=0$ which implies that $c_3=c_4-c_3=0$ and then $c_3=c_4=0$.

Then the coefficient of $x_2$ must be also $0$, so $c_2=-c_1\omega$ and we have that
$$c_2^2-c_1c_2+c_1^2=c_1^2\omega^2+c_1^2\omega+c_1^2=c_1^2(\omega^2+\omega+1)\equiv 0\pmod p.$$
On the other hand $$\left |c_2^2-c_1c_2+c_1^2\right |\le 3L^2<p.$$
This implies that $c_2^2-c_1c_2+c_1^2=0$ which implies that $c_1=c_2=0$. So $c_0=\cdots =c_4$.

We have proved that the conditions of Theorem \ref{GSZ} are also satisfied when $6r_2- 2r_1^2\equiv 0\pmod p$ for $\mathcal C_{r_1}^+$ (and similarly for $\mathcal C_{r_1}^-$) and then the set $S_{r_1,r_2}$ is well distributed in all the cases.
\end{proof}

\subsection{End of the proof of Theorem \ref{modular2}}Erd\H os and Turan \cite{ETu} showed that the set $$A=\{x+(x^2)_p(2p):\ x=0,\dots, p-1\}$$
is a Sidon set of integers for any odd prime $p$.

For given $N$ let $p$ be a prime such that $p\equiv 1\pmod 3$ and $4p^2<N<5p^2$. This prime exists if $N$ is large enough.
Since $A\subset [0,2p^2)\subset [0,N/2)$, the set $A$  is  a Sidon set in $\Z_N$. We will prove that $A$ is also a basis of order 3 in $\mathbb Z_N$.

We observe that for any integer $K$, the set of integers of the form
\begin{equation}\label{r12}r_1+r_2(2p),\quad K\le r_1,r_2\le  \frac{5p-1}2+K\end{equation}
covers an interval of length $5p^2$. This is clear for $K=0$ and, by translation, for all $K$.
Since $5p^2>N$, in order to prove that $A$ is a basis of order 3 in $\Z_N$ it is enough to prove that any element of the form \eqref{r12} can be written as a sum of $3$ elements of $A$.
We will take $K=\lceil p/4\rceil$ through the proof.

For each $(r_1,r_2)$ we consider the box $B_{r_1,r_2}\subset [0,1]^4$ of all points $(y_1,y_2,y_3,y_4)$ satisfying  the restrictions
\begin{eqnarray*}
\left |y_{1}-\frac{r_1}{3p}\right |,\
\left |y_{2}-\frac{r_1}{3p}\right |,\
\left |y_{3}-\frac{r_2}{3p}\right |,\
\left |y_{4}-\frac{r_2}{3p}\right |\le \frac{K}{12p}.
\end{eqnarray*}
We have to check that $0<y_i<1,\ i=1,\dots,4$ and then that $B_{r_1,r_2}\subset [0,1]^4$. Indeed, since $p\ge 7$, we have
$$  y \le \frac{r_i}{3p}+\frac{K}{12p}\le \frac{\frac{5p-1}2+K}{3p}+\frac{K}{12p}<\frac{5p-1}{6p}+\frac{5K}{12p}\le \frac{5p-1}{6p}+\frac{5(p+3)}{48p}\le \frac{45p+7}{48p}<1$$
and
$y\ge   \frac{r_1}{3p}-\frac{K}{12p}\ge \frac{K}{3p}-\frac{K}{12p}>0$.

The size of this box is $|B_{r_1,r_2}|\ge \left (\frac{K}{12p}\right )^{4}>48^{-4}$ and then Proposition \ref{g} implies that for $p$ large enough  there exists an element, say $\left (\frac{x_1}p,\frac{x_2}p,\frac{(x_1^2)_p}{p},\frac{(x_2^2)_p}p\right )$, with $0\le x_1,x_2\le p-1,\ (x_1,x_2)\in \mathcal C_{r_1,r_2}$ satisfying
\begin{eqnarray*}
\left |\frac{x_{1}}p-\frac{r_1}{3p}\right |,\
\left |\frac{x_{2}}p-\frac{r_1}{3p}\right |,\
\left |\frac{(x_{1}^2)_p}p-\frac{r_2}{3p}\right |,\
\left |\frac{(x_{2}^2)_p}p-\frac{r_2}{3p}\right |\le \frac{K}{12p}.
\end{eqnarray*}
Since $(x_1,x_2)\in \mathcal C_{r_1,r_2}$ there exists an integer $x_3,\ 0\le x_{3}\le p-1$ satisfying
 \begin{eqnarray*}
 x_1+x_2+x_{3}&\equiv &r_1\pmod p\\
  x_1^2+x_2^2+x_{3}^2&\equiv &r_2\pmod p.
 \end{eqnarray*}
Let $m$ be such that $x_1+x_2+x_3=r_1+mp$. We have
\begin{eqnarray*}|m|&\le &\left |\frac{x_1}p-\frac{r_1}{3p}\right |+\left |\frac{x_2}p-\frac{r_1}{3p}\right |+\left |\frac{x_3}p-\frac{r_1}{3p}\right |\\ &\le& \frac{K}{12p}+\frac{K}{12p}+\max\left (\frac{r_1}{3p},1-\frac{r_1}{3p}   \right )\\ &\le &\frac{K}{6p}+\max\left (\frac{5p/2+K}{3p},1-\frac{K}{3p}   \right )\\
&\le &\max \left (\frac{5p+3K}{6p},1-\frac{K}{6p}\right )<1,
\end{eqnarray*}
since $K=\lceil \tfrac p4\rceil$ and $p\ge 7$. This proves that indeed $x_1+x_2+x_3=r_1$. The same argument proves that $(x_1^2)_p+(x_2^2)_p+(x_{3}^2)_p=r_2$.
Thus we have
$$r_1+r_2(2p)=x_1+(x_1^2)_p(2p)+x_2+(x_2^2)_p(2p)+x_3+(x_3^2)_p(2p),$$
that is what we wanted to prove.
\section{The probabilistic method with some new tools} The proofs of Theorems \ref{Main2} and \ref{Main1} are based on the probabilistic method introduced by Erd\H os and Renyi \cite{ER} to study sequences satisfying certain arithmetic properties. The book \cite{AS} is the most complete  reference on the probabilistic method and \cite{HR} is a classic reference for the probabilistic method applied to sequences of integers.

For a given $\gamma$, with $\ 0<\gamma<1$, Erd\H os and Renyi introduced the probabilistic space $\mathcal S(\gamma)$ of all sequences of positive integers $A$ such that all the events $x\in A$ are independent and $\Po(x\in A)=x^{-\gamma}$.

Generally speaking the goal is to prove that a sequence $A$ in $\mathcal S(\gamma)$ satisfies certain arithmetic property (or properties) with high probability. To be more precise, we consider  certain families  $\Omega_n$ of sets of positive integers and the random families $$\Omega_n(A)=\{\omega\in \Omega_n:\ \omega\subset A\}$$ generated by a random sequence $A$ in $S(\gamma)$. Typically we are interested in the random variable $$X_n(A)=|\Omega_n(A)|=\sum_{\omega\in \Omega_n}I(\omega\subset A).$$
For example if \begin{equation}\label{ex3}\Omega_n=\{\omega=\{x_1,x_2,x_3\}:\ x_1+x_2+x_3=n\},\end{equation} the random variable $X_n(A)$ counts the number of representation of $n$ as a sum of three elements of a random sequence $A$ in $\mathcal S(\gamma)$. In general we are interested in proving that $X_n(A)$  satisfies a certain property $P_n$.
The standard strategy is to first prove that \begin{equation}\label{Borel}\sum_n\Po(X_n(A)\text{ does not satisfies }P_n)<\infty\end{equation} and then apply Borel-Cantelli lemma to deduce that with probability $1$, the random variable $X_n(A)$ satisfies property $P_n$ for all $n$ large enough.

We will modify the probabilistic space $\mathcal S(\gamma)$ to force that all the elements of $A$ lie in some residue classes $s\in S\pmod N$ for some $S\subset \Z_N$ satisfying suitable conditions. At the end of this section  we explain the advantage of this modification. We will write $x\equiv S\pmod N$ to mean that $x\equiv s\pmod N$ for some $s\in S$.

Also it is technically more convenient to introduce a parameter $m$ to force that  the elements  of $A$ are greater than a fixed $m$.   This  idea was introduced before in \cite{CKRV} and allows us to bound \eqref{Borel} by a quantity which is $o_m(1)$. At a later step we take $m$ as large as we want.
\begin{defn}Let $S$ be a non empty set of a cyclic group $\Z_N$. For a given $\gamma,\ 0<\gamma<1$ and a given positive integer $m$, let $\mathcal S_m(\gamma;\ {\scriptstyle S\mod N} )$ be the probabilistic space of all sequences of positive integers $A$ such that all the events $x\in A$ are independent and such that $$\Po(x\in A)=\begin{cases}x^{-\gamma}\quad\text{  if  }\quad x\equiv S\pmod N \text{ and } x>m\\ 0 \qquad  \text{ otherwise}.\end{cases}$$
\end{defn}
Since $X_n(A)$ is a sum of boolean variables we expect that $X_n(A)$ is  concentrated around its expected value, $\mu_n=\E(X_n(A))$, with high probability.

When the variables $I(\omega\in A)$ are independent (the sets $\omega \in \Omega_n$ are disjoint), Chernoff's theorem is enough to prove that $X_n(A)$ is strongly concentrated around $\mu_n$. However, when the sets in $\Omega_n$ are not disjoint, as in the example \eqref{ex3}, the study of the concentration is more involved.

It is expected, however, that if the dependent events have small correlation we still have enough concentration.
Janson's inequality \cite{J} serves our purpose for the lower tail:
 \begin{thm}[Janson's inequality]
Let $\Omega$ be a family of sets  and let $A$ be a random subset. Let $X(A)=|\{\omega\in \Omega:\ \omega\subset A\}|$ with finite expected value $\mu=\E(X(A))$. Then
$$\Po(X\le (1-\varepsilon)\mu )\le exp\left (-\varepsilon^2\mu^2/(2\mu+\Delta(\Omega))  \right )$$ where
$$\Delta(\Omega)=\sum_{\substack{\omega,\omega'\in \Omega\\ \omega\sim \omega'}}\Po(\omega,\omega'\subset A)$$ and $\omega\sim \omega'$ means that $\omega\cap \omega'\ne \emptyset$ and $\omega\ne \omega'$.
In particular, if $\Delta(\Omega)<\mu$ we have that
$$\Po (X\le \mu/2)\le \exp \left (-\mu/12  \right ).   $$
 \end{thm}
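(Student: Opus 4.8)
I would prove this by the classical exponential-moment argument, combining a Chernoff-type estimate with the Harris/FKG correlation inequality and reducing the lower tail to the ``$X=0$'' case of the inequality. Throughout, for $\omega\in\Omega$ write $B_\omega$ for the event $\{\omega\subset A\}$. Since $A$ is formed by including each element of the ground set independently, every $B_\omega$ is an \emph{increasing} event in the family of independent Bernoulli indicators $\bigl(I(x\in A)\bigr)_x$, and $X=\sum_{\omega}I(B_\omega)$ with $\mu=\E(X)=\sum_\omega\Po(B_\omega)<\infty$. The plan rests on two steps. First, the exponential Markov inequality: for every $t\ge 0$,
$$\Po\bigl(X\le (1-\varepsilon)\mu\bigr)=\Po\bigl(e^{-tX}\ge e^{-t(1-\varepsilon)\mu}\bigr)\le e^{t(1-\varepsilon)\mu}\,\E\bigl(e^{-tX}\bigr).$$
Thus everything reduces to a good upper bound for the Laplace transform $\E(e^{-tX})$, after which one optimizes in $t$.

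For the Laplace transform I would pass through the ``no $B_\omega$ occurs'' form of Janson's inequality: if $C_\omega$, $\omega\in\Omega$, are increasing events of the above type, then
$$\Po\Bigl(\bigcap_{\omega}\overline{C_\omega}\Bigr)\le\exp\Bigl(-\sum_\omega\Po(C_\omega)+\tfrac12\sum_{\omega\sim\omega'}\Po(C_\omega\cap C_{\omega'})\Bigr).$$
This one is proved by fixing an ordering of $\Omega$ and telescoping $\Po\bigl(\bigcap_\omega\overline{C_\omega}\bigr)=\prod_\omega\Po\bigl(\overline{C_\omega}\mid\bigcap_{\omega'\ \mathrm{earlier}}\overline{C_{\omega'}}\bigr)$: the earlier events disjoint from $\omega$ are independent of $C_\omega$, while the finitely many earlier $C_{\omega'}$ with $\omega'\sim\omega$ are controlled by the Harris inequality (FKG for product measures), which gives $\Po\bigl(\overline{C_\omega}\mid\cdots\bigr)\le 1-\Po(C_\omega)+\sum_{\omega'\sim\omega,\ \omega'\ \mathrm{earlier}}\Po(C_\omega\cap C_{\omega'})$; applying $1+u\le e^u$ and multiplying over $\omega$ yields the displayed bound, the factor $\tfrac12$ coming from counting only ordered pairs with $\omega'$ earlier.

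To connect the two I would use a thinning device. Put $\beta=1-e^{-t}\in[0,1)$, adjoin to the ground set fresh independent Bernoulli$(\beta)$ variables $(\xi_\omega)_{\omega\in\Omega}$, and set $C_\omega:=B_\omega\cap\{\xi_\omega=1\}$, still an increasing event in the enlarged independent ground set. Since $e^{-tI(B_\omega)}=1-\beta I(B_\omega)=\Po\bigl(\overline{C_\omega}\mid (I(x\in A))_x\bigr)$ and the $\xi_\omega$ are independent across $\omega$, taking expectations gives $\E(e^{-tX})=\Po\bigl(\bigcap_\omega\overline{C_\omega}\bigr)$; moreover $\sum_\omega\Po(C_\omega)=\beta\mu$ and $\sum_{\omega\sim\omega'}\Po(C_\omega\cap C_{\omega'})=\beta^2\Delta(\Omega)$. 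Hence $\E(e^{-tX})\le\exp\bigl(-\beta\mu+\tfrac12\beta^2\Delta(\Omega)\bigr)$. Feeding this into the Markov bound and using the elementary estimates $\beta\le t=-\log(1-\beta)\le\beta+\tfrac12\beta^2+O(\beta^3)$ to compare the two exponents, the right-hand side is $\le\exp\bigl(-\varepsilon\beta\mu+\tfrac12\beta^2(2\mu+\Delta(\Omega))\bigr)$ (after the cheap bound $1-\varepsilon\le 1$); choosing $\beta$ (equivalently $t\ge0$) of order $\varepsilon\mu/(2\mu+\Delta(\Omega))$ and carrying out the one-variable minimization yields a bound of the shape $\exp\bigl(-c\,\varepsilon^2\mu^2/(2\mu+\Delta(\Omega))\bigr)$ with an absolute constant $c>0$; pushing the elementary inequalities to their sharp form gives the stated constant. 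The ``in particular'' is then immediate on taking $\varepsilon=\tfrac12$ and noting $2\mu+\Delta(\Omega)<3\mu$ when $\Delta(\Omega)<\mu$ --- and in any case any positive exponential decay in $\mu$ of this type is all that the later Borel--Cantelli arguments require.

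The only genuinely delicate point --- and the one place where real work is needed --- is the Harris/FKG step inside the $X=0$ estimate: one must arrange that, after conditioning on a down-set of earlier non-occurrences, the events disjoint from $\omega$ (being independent of $C_\omega$) cost nothing, while the finitely many that share a ground-set element with $\omega$ contribute only the $\Delta(\Omega)$ term. Monotonicity of all the events is exactly what makes the correlation inequality applicable, which is also why the thinning above is implemented by \emph{adjoining} new independent coordinates $\xi_\omega$ rather than by deleting old ones: deletion would break the increasing-event structure. Everything else is bookkeeping with $1+u\le e^u$ and the optimization in $t$.
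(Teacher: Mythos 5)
The paper states Janson's inequality as a cited result from \cite{J} and gives no proof, so there is no in-paper argument for you to match; what you have reconstructed is the standard literature proof of the lower-tail Janson bound. Your four-step skeleton --- exponential Markov, the thinning device $\E e^{-tX}=\Po\bigl(\bigcap_\omega\overline{C_\omega}\bigr)$ with $C_\omega=B_\omega\cap\{\xi_\omega=1\}$, the ``$X=0$'' form of Janson's inequality proved by telescoping over a fixed ordering of $\Omega$ together with Harris/FKG, and a one-variable optimization --- is correct, and the remark on why the thinning must \emph{adjoin} fresh independent coordinates rather than subsample old ones (so that the $C_\omega$ remain increasing in a product of independent Bernoullis, which is exactly what Harris needs) is the right point to isolate.

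The one inaccuracy is the closing claim that ``pushing the elementary inequalities to their sharp form gives the stated constant.'' It does not. After optimizing over $t$, the exponent $t(1-\varepsilon)\mu-\beta\mu+\tfrac12\beta^2\Delta$ with $\beta=1-e^{-t}$ is strictly larger than $-\varepsilon^2\mu^2/(2\mu+\Delta)$ once $\Delta$ is comparable to $\mu$: for instance with $\mu=\Delta=1$ and $\varepsilon=\tfrac12$ the minimum is about $-0.077$, versus the target $-\tfrac1{12}\approx-0.083$, and as $\Delta/\mu\to\infty$ the thinning exponent is asymptotically only half the stated one. Getting the precise constant requires a sharper estimate of the Laplace transform than the thinning trick delivers (this is where Janson's original argument does extra work). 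You are right, however, that this is immaterial for the present paper: any bound of the form $\exp\bigl(-c\,\varepsilon^2\mu^2/(2\mu+\Delta)\bigr)$ with an absolute $c>0$ feeds into Propositions \ref{MP12} and \ref{MP1} and the Borel--Cantelli step equally well, so the gap is cosmetic rather than substantive.
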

To deal with the upper tail  Erd\H os and Tetali \cite{ET} introduced the Sunflowers trick.
\subsection{Sunflowers and vectorial Sunflowers}
A  collection of sets $S_1,\dots, S_k$ forms a sunflower if there exists a set $C$ such that $S_i\cap S_j=C$ for any $i\ne j$. The sets $S_i\setminus C$ are the petals and $C$ is the core of the sunflower. Erd\H os and Rao \cite{ERa} proved the following interesting lemma.
\begin{lem}[Sunflower lemma]\label{Sunflower} Let $\Omega$ a family of $h$-sets.  If $\Omega$ does not contain a  sunflower  of $k$ petals then $|\Omega|\le h!(k-1)^h$.
\end{lem}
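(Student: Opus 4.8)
The plan is to prove Lemma \ref{Sunflower} by induction on $h$. For the base case $h=1$ the family $\Omega$ consists of singletons; any $k$ distinct singletons are pairwise disjoint and hence already form a sunflower with empty core and $k$ petals, so the hypothesis that $\Omega$ contains no sunflower with $k$ petals forces $|\Omega|\le k-1=1!(k-1)^1$.

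For the inductive step, assume the bound holds for every family of $(h-1)$-sets, and let $\Omega$ be a family of $h$-sets with no sunflower of $k$ petals. First I would pick a \emph{maximal} subcollection $S_1,\dots,S_t\in\Omega$ of pairwise disjoint sets. Pairwise disjoint sets constitute a sunflower with empty core, whose petals are the $S_i$ themselves, so the absence of a $k$-petal sunflower gives $t\le k-1$. Set $Y=S_1\cup\dots\cup S_t$, so that $|Y|\le h(k-1)$, and note that by maximality of the subcollection every member of $\Omega$ meets $Y$.

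Next, for each $y\in Y$ I would form the ``link'' family $\Omega_y=\{\,S\setminus\{y\}:\ S\in\Omega,\ y\in S\,\}$, a family of $(h-1)$-sets. The key point is a lifting observation: if $\Omega_y$ contained a sunflower with $k$ petals, then adjoining $y$ to each of its members would produce a sunflower with $k$ petals in $\Omega$, the core simply gaining the element $y$ while the petals are unchanged. Hence $\Omega_y$ has no sunflower of $k$ petals either, and the induction hypothesis yields $|\Omega_y|\le (h-1)!(k-1)^{h-1}$. Since every $S\in\Omega$ contains at least one element of $Y$, we conclude
$$|\Omega|\ \le\ \sum_{y\in Y}|\Omega_y|\ \le\ |Y|\,(h-1)!(k-1)^{h-1}\ \le\ h(k-1)\cdot (h-1)!(k-1)^{h-1}\ =\ h!(k-1)^h,$$
completing the induction.

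The only genuinely delicate point is the lifting step: one must verify that adjoining the removed element $y$ back to each (core$\,\cup\,$petal) set of a sunflower in $\Omega_y$ really gives a sunflower in $\Omega$, i.e. that all pairwise intersections are exactly the enlarged core. This is immediate once one observes that $y$ belongs to every set of the lifted family, so it merely joins the core and leaves each petal intact; everything else in the argument is routine bookkeeping and counting.
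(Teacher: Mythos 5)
The paper does not give a proof of this lemma; it is stated and attributed to Erd\H os and Rado with a citation to \cite{ERa}, and is then used as a black box in the proof of the vectorial version (Lemma \ref{sunflower}). So there is no internal proof to compare against. Your argument is the classical Erd\H os--Rado induction and it is correct: the base case is right, the greedy extraction of a maximal pairwise-disjoint subfamily gives $|Y|\le h(k-1)$, the link families $\Omega_y$ inherit the no-$k$-sunflower property via the lifting observation (the added element $y$ joins the core and leaves petals untouched), and the double count $|\Omega|\le\sum_{y\in Y}|\Omega_y|$ closes the induction to give $h!(k-1)^h$. One cosmetic remark: it is worth noting explicitly that for a fixed $y$ the map $S\mapsto S\setminus\{y\}$ from $\{S\in\Omega: y\in S\}$ to $\Omega_y$ is injective, which is what makes $\sum_{y\in Y}|\Omega_y|$ an overcount of $|\Omega|$ rather than merely a heuristic bound; you implicitly use this but do not say it.
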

We will work with a variant of the Sunflower lemma which deals with vectors instead of sets. The reason is that  in our proofs sometimes it will be more convenient to work with families $\Omega$ of vectors (instead of sets).
\begin{defn}For a given vector $\overline x=(x_1,\dots,x_h)$ we define $\text{Set}(\overline x)=\{x_1,\dots,x_h\}$. We say that a collection of $k$  distinct vectors $\overline x_j,\ j=1,\dots ,k$ forms a  disjoint set of $k$ vectors ($k$-d.s.v. for short) if $ \text{Set}(\overline x_j)\cap \text{Set}(\overline x_{j'})=\emptyset $ for any $j\ne j'$.
\end{defn}
\begin{defn}We say that $k$  distinct vectors with $h$ coordinates form a  vectorial sunflower (of $k$ petals) if for some $I\subset [h]$ the following two conditions are satisfied:
\begin{itemize}
  \item For all $i\in I$ all the vectors have the same i-th coordinate.
  \item The set of vectors obtained by removing all the i-th coordinates, $i\in I$, forms a $k$-d.s.v.
\end{itemize}
We say that the vectorial sunflower is of type $I$. \end{defn}
We observe that a vectorial sunflower (of $k$ petals)  of type $I=\emptyset$ is a $k$-d.s.v. The example belove forms a vectorial sunflower (of $4$ petals) of  type $I=\{2,5\}$.
\begin{eqnarray*}
\overline{x}_1&=&(\ 7, \ \ \textbf{7},\ \ 1,\   13,\  \textbf{8})\\
\overline{x}_2&=&(17, \ \ \textbf{7},\ \ 6, \ \ 6,\  \textbf{8})\\
\overline{x}_3&=&(\ 8, \ \ \textbf{7},\  18,\ \  8,\  \textbf{8})\\
\overline{x}_4&=&(11, \ \ \textbf{7},\ \ 4,\ \ 5,\  \textbf{8})
     \end{eqnarray*}
We need a vectorial version of Lemma \ref{Sunflower}.
\begin{lem}[Vectorial sunflower lemma]\label{sunflower} Let $\Omega$ be a family of vectors of $h$ coordinates.  If $\Omega$ does not contain a  vectorial sunflower of $k$ petals  then $|\Omega|\le h!((h^2-h+1)k)^h$.
\end{lem}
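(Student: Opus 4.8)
The plan is to reduce the vectorial sunflower lemma to the classical Sunflower Lemma (Lemma \ref{Sunflower}) by encoding each $h$-coordinate vector $\overline x$ as an honest set, in such a way that a classical sunflower in the encoded family forces a vectorial sunflower in the original family.

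First I would fix the encoding. Given a vector $\overline x=(x_1,\dots,x_h)$, I replace it by the set of \emph{labelled coordinates}
$$\Phi(\overline x)=\{(1,x_1),(2,x_2),\dots,(h,x_h)\},$$
so that $\Phi(\overline x)$ is always an $h$-set, the map $\overline x\mapsto \Phi(\overline x)$ is injective, and $\Phi(\Omega)=\{\Phi(\overline x):\overline x\in\Omega\}$ is a family of $h$-sets with $|\Phi(\Omega)|=|\Omega|$. Now suppose $\Phi(\overline x_1),\dots,\Phi(\overline x_m)$ form a classical sunflower with core $C$; since every element of $C$ is a pair $(i,x_i)$ with the same value across all petals, the set $I=\{i\in[h]: (i,v)\in C\text{ for some }v\}$ consists exactly of the coordinates on which all the $\overline x_j$ agree, and the petals $\Phi(\overline x_j)\setminus C$ are pairwise disjoint, i.e.\ for $i\notin I$ the values $x_{j,i}$ over $j$ are such that $\{(i,x_{j,i}):i\notin I\}$ are pairwise disjoint across $j$. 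This last condition says precisely that the vectors obtained by deleting the coordinates in $I$ have pairwise \emph{coordinatewise-disjoint} label sets — but that is slightly stronger than forming a $k$-d.s.v., which only requires $\mathrm{Set}(\overline x_j')\cap\mathrm{Set}(\overline x_{j'}')=\emptyset$. So a classical sunflower does give a vectorial sunflower, but to get the stated bound I need to be careful about how many classical sunflower-petals I must extract to guarantee a vectorial sunflower of $k$ petals.

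The quantitative heart is this: from a classical sunflower of $M$ petals in $\Phi(\Omega)$ with core-index-set $I$, I want to extract $k$ petals forming a vectorial sunflower. The petals $\Phi(\overline x_j)\setminus C$ are pairwise disjoint as sets of pairs, but after forgetting the first coordinate (the label) two different petals could still share a bare value, since value $v$ could appear in coordinate $i$ of one petal and coordinate $i'$ of another. Each petal has exactly $h-|I|\le h$ bare values; by a greedy/union-bound argument, starting from $M$ petals I can iteratively pick petals whose value-sets are pairwise disjoint, losing at most $(h-|I|)^2\le h^2$ petals worth of "conflicts" at each step — more precisely, each chosen petal blocks at most $(h-|I|)\cdot(h-|I|)$ values hence at most that many other petals per coordinate, and summing gives that $M\ge (h^2-h+1)(k-1)+1$ petals suffice to find $k$ pairwise value-disjoint ones. (The constant $h^2-h+1$ is exactly what makes the final bound come out; I'd verify it by the counting just sketched, noting that a petal with $h-|I|$ entries can collide with another in at most $(h-|I|)(h-|I|-1)+1\le h^2-h+1$ relevant ways per already-chosen petal. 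This combinatorial bookkeeping is where I expect the only real friction.)

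Putting it together: suppose $\Omega$ contains no vectorial sunflower of $k$ petals. Then by the previous paragraph $\Phi(\Omega)$ contains no classical sunflower of $(h^2-h+1)(k-1)+1$ petals, hence no classical sunflower of $(h^2-h+1)k$ petals either. Applying Lemma \ref{Sunflower} to the family $\Phi(\Omega)$ of $h$-sets with $k$ replaced by $(h^2-h+1)k$ gives
$$|\Omega|=|\Phi(\Omega)|\le h!\bigl((h^2-h+1)k-1\bigr)^h\le h!\bigl((h^2-h+1)k\bigr)^h,$$
which is the claimed bound. The main obstacle, as noted, is pinning down the extraction constant in the middle step; everything else is just the labelling trick plus a direct invocation of the classical Sunflower Lemma.
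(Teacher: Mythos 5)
Your proposal is correct and follows essentially the same route as the paper: encode each vector as an $h$-set by tagging coordinates with their positions (the paper uses the numerical encoding $\mathrm{Set}_h(\overline x)=\{hx_1+1,\dots,hx_h+h\}$, which is equivalent to your pair-labelling $\Phi$), invoke the classical Sunflower Lemma to obtain a classical sunflower of $(h^2-h+1)(k-1)+1$ petals, and then greedily extract $k$ petals that are disjoint as value-sets, losing at most $h(h-1)$ petals per selection — exactly the paper's bookkeeping. One slip worth flagging: where you write that pairwise coordinatewise-disjointness is \emph{slightly stronger} than being a $k$-d.s.v., the inequality runs the other way (a $k$-d.s.v.\ requires disjoint value sets in full, which \emph{implies} coordinatewise distinctness but not conversely); your subsequent extraction step does treat it correctly as the weaker condition, so the substance of the argument is unaffected.
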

\begin{proof}
Suppose that $|\Omega|>h!((h^2-h+1)(k-1))^h$. For any $\overline x=(x_1,\dots,x_h)\in \Omega$ we consider the set $\text{Set}_h(\overline x)=\{hx_1+1,hx_2+2,\dots, hx_h+h\}$ and the family $\hat \Omega=\{\text{Set}_h(\overline x):\ \overline x\in \Omega\}$. The sunflower lemma of Erd\H os-Rao applied to $\hat \Omega$  implies that there exists a classical sunflower with $(h^2-h+1)(k-1)+1$ petals, say $\text{Set}_h(\overline x_1),\dots ,\text{Set}_h(\overline x_{(h^2-h+1)(k-1)+1})$. It is clear that from these sets we can recover the corresponding  vectors  $\overline x_1,\dots ,\overline x_{(h^2-h+1)(k-1)+1}$ which satisfy the following conditions:
\begin{itemize}
  \item There exists $I\subset \{1,\dots, h\}$ such that for each $i\in I$ all the $(h^2-h+1)(k-1)+1$ vectors have the same $i$-th coordinate.
  \item For each $i\not \in I$, the $i$-th coordinates of all these vectors are pairwise distinct.
\end{itemize}
We observe that the conditions above are not enough to make sure that the vectors form a vectorial sunflower.
We will proof, however, that  the set $\{\overline x_1,\dots ,\overline x_{(h^2-h+1)(k-1)+1}\}$ contains a vectorial sunflower of $k$ petals.

Select  one vector, say $\overline x_1$. We know that if $i\not \in I$, the $i$-th coordinate of $\overline x_1$ cannot be equal to the $i$-th coordinate of a distinct vector. However it may be equal to a different $i'$-th coordinate ($i'\not \in I$) of a distinct vector. We observe that for each $i\not \in I$ and for each $i'\not \in I,\ i'\ne i$ there is at most one such vector.

We remove, for each $i\not \in I$ and for each $i'\not \in I ,\ i'\ne i$, that vector (if it exists). Thus removing at most $h(h-1)$ vectors we make sure that for all $i\not \in I$, the $i$-th coordinate of $x_1$ is not equal to any $i'$-th coordinate ($i'\not \in I,\ i'\ne i$) of a distinct vector.

 Now we select a second vector and proceed as above. Since the number of original vectors was $(h(h-1)+1)(k-1)+1$  we can select at least $k$ vectors in this way forming a vectorial sunflower of $k$ petals.
\end{proof}
Typically we will deal with families of vectors $\Omega$ and with the corresponding random families $\Omega(A)=\{\overline x\in \Omega:\ \text{Set}(\overline x)\subset A\}$.
\begin{cor}\label{KV} Let $\Omega_n$ be a sequence of families of vectors of $h$ coordinates. Suppose that with probability $1-o_m(1)$ the random families $\Omega_n(A)$ do not contain vectorial sunflowers of $K$ petals for any $n$. Then, with probability $1-o_m(1)$ it holds that $|\Omega_n(A)|\le h!((h^2-h+1)K)^h$ for all $n$.
\end{cor}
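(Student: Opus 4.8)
The plan is to obtain Corollary~\ref{KV} as an immediate consequence of the Vectorial Sunflower Lemma (Lemma~\ref{sunflower}), with the probabilistic hypothesis serving only to package the deterministic statement.

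First I would fix the event on which everything will be run: let $E$ be the event that for every $n$ the random family $\Omega_n(A)$ contains no vectorial sunflower of $K$ petals. By assumption $\Po(E)=1-o_m(1)$, so it suffices to show that the bound $|\Omega_n(A)|\le h!((h^2-h+1)K)^h$ holds for all $n$ whenever $A$ realizes the event $E$.

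So I would now argue deterministically. Fix a realization of $A$ lying in $E$ and fix $n$. The family $\Omega_n(A)=\{\overline x\in\Omega_n:\ \text{Set}(\overline x)\subset A\}$ is a family of vectors of $h$ coordinates, and by the definition of $E$ it contains no vectorial sunflower of $K$ petals. Lemma~\ref{sunflower} applied to $\Omega_n(A)$ then gives directly $|\Omega_n(A)|\le h!((h^2-h+1)K)^h$. Since $n$ was arbitrary and the bound does not depend on $n$, this inequality holds for all $n$ at once on the event $E$; intersecting over $n$ costs nothing. Combining with $\Po(E)=1-o_m(1)$ finishes the proof.

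There is no serious obstacle here: the entire content is in Lemma~\ref{sunflower}, and the corollary is just its convenient probabilistic reformulation for use in Sections~\S 4 and \S 5. The only two small points worth a sentence in the writeup are (i) that no union bound over $n$ is needed, because once $A\in E$ the estimate is a uniform deterministic consequence of the lemma, and (ii) that a family $\Omega_n(A)$ with no $K$-petal vectorial sunflower is automatically finite — indeed, an infinite family of $h$-coordinate vectors would, via the Erd\H os--Rado lemma underlying Lemma~\ref{sunflower}, necessarily contain such a sunflower — so the membership $A\in E$ already rules out the degenerate case $|\Omega_n(A)|=\infty$.
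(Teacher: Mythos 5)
Your proof is correct and coincides with what the paper intends: Corollary~\ref{KV} is stated without proof precisely because it is an immediate deterministic consequence of Lemma~\ref{sunflower} applied pointwise on the event $E$, as you explain. The two remarks you add — that no union bound over $n$ is needed once $A\in E$, and that the absence of a $K$-petal vectorial sunflower already forces $\Omega_n(A)$ to be finite — are both accurate and worth keeping.
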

The following proposition will be used several times in the proofs of Theorems \ref{Main2} and \ref{Main1}.
\begin{prop}\label{ind}Let $\{\Omega_n\}$ be a sequence of families of vectors and $\{\Omega_n(A)\}$ the corresponding random family  where $A$ is a random sequence in $\mathcal S_m(\gamma, {\scriptstyle S\pmod N})$.  Suppose that there is $\delta>0$ such that $\E(|\Omega_n(A)|)\ll (n+m)^{-\delta}$. If $K>1/\delta$ then
$$\Po(\Omega_n(A) \text{ contains a K-d.s.v. for some } n)=o_m(1).$$
\end{prop}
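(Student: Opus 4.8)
The plan is to bound the probability that $\Omega_n(A)$ contains a $K$-d.s.v.\ by a first-moment computation, summed over $n$, and then show this sum is $o_m(1)$. First I would fix $n$ and let $Y_n(A)$ denote the number of ordered $K$-tuples $(\overline{x}^{(1)},\dots,\overline{x}^{(K)})$ of distinct vectors in $\Omega_n$ that form a $K$-d.s.v.\ and all of whose coordinates lie in $A$. The key observation is that a $K$-d.s.v.\ consists of $K$ vectors whose coordinate sets are pairwise disjoint, so the events $\set{\text{Set}(\overline{x}^{(j)})\subset A}$ for $j=1,\dots,K$ are \emph{independent} (this is exactly where disjointness is used). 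Hence
\[
\E(Y_n(A)) \;\le\; \sum_{\substack{\overline{x}^{(1)},\dots,\overline{x}^{(K)}\in\Omega_n \\ \text{pairwise coord-disjoint}}} \;\prod_{j=1}^{K}\Po(\text{Set}(\overline{x}^{(j)})\subset A) \;\le\; \bigl(\E(|\Omega_n(A)|)\bigr)^{K},
\]
since dropping the disjointness constraint only enlarges the sum and turns it into $\bigl(\sum_{\overline{x}\in\Omega_n}\Po(\text{Set}(\overline{x})\subset A)\bigr)^{K} = \E(|\Omega_n(A)|)^{K}$.

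Next I would use the hypothesis $\E(|\Omega_n(A)|)\ll (n+m)^{-\delta}$ to get $\E(Y_n(A)) \ll (n+m)^{-K\delta}$. Since $\Omega_n(A)$ contains a $K$-d.s.v.\ if and only if $Y_n(A)\ge 1$, Markov's inequality gives $\Po(\Omega_n(A)\text{ contains a }K\text{-d.s.v.}) \le \E(Y_n(A)) \ll (n+m)^{-K\delta}$. Summing over $n\ge 1$,
\[
\Po\bigl(\Omega_n(A)\text{ contains a }K\text{-d.s.v.\ for some }n\bigr) \;\le\; \sum_{n\ge 1}\E(Y_n(A)) \;\ll\; \sum_{n\ge 1}(n+m)^{-K\delta}.
\]
Because $K>1/\delta$ we have $K\delta>1$, so the series converges, and the tail $\sum_{n\ge 1}(n+m)^{-K\delta}$ is bounded by $\int_{m}^{\infty} t^{-K\delta}\,dt \ll m^{1-K\delta}\to 0$ as $m\to\infty$. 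This is precisely $o_m(1)$, completing the proof.

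The only genuinely delicate point is the independence step: one must be careful that ``$K$-d.s.v.'' is defined via disjointness of the \emph{sets} $\text{Set}(\overline{x}^{(j)})$, so the boolean indicators $I(\text{Set}(\overline{x}^{(j)})\subset A)$ depend on disjoint blocks of the independent events $\set{x\in A}$ and hence multiply; everything else is a routine first-moment/Borel--Cantelli argument, with the parameter $m$ supplying the decay $m^{1-K\delta}$ that makes the bound $o_m(1)$ rather than merely finite. A minor subtlety is that one should make sure the implied constant in $\E(|\Omega_n(A)|)\ll (n+m)^{-\delta}$ is uniform in $m$, which it is by the meaning of $\ll$ in the paper's conventions, so raising to the $K$-th power and summing is legitimate.
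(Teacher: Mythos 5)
Your proposal is correct and follows essentially the same route as the paper: a union bound (phrased as Markov's inequality on the first moment) over $K$-tuples of coordinate-disjoint vectors, exploiting independence of the disjoint events $\mathrm{Set}(\overline{x}^{(j)})\subset A$ to factor the probability, then dropping disjointness to bound by $\E(|\Omega_n(A)|)^K\ll (n+m)^{-K\delta}$, and finally summing over $n$ and noting the tail is $o_m(1)$ since $K\delta>1$. The only cosmetic difference is that the paper counts unordered tuples and so picks up an extra harmless factor of $1/K!$, whereas you count ordered tuples.
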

\begin{proof}
\begin{eqnarray*}
\Po\left (\Omega_n(A) \text{ contains a K-d.s.v.}\right )&\le &\sum_{\substack{\overline x_1,\dots,\overline x_K\in \Omega_n\\ \text{ form a } K\text{-d.s.v.}}}\Po\big (\text{Set}(\overline x_1),\dots, \text{Set}(\overline x_K)\subset A\big )\\
&=&\sum_{\substack{\overline x_1,\dots,\overline x_K\in \Omega_n\\ \text{ form a } K\text{-d.s.v.}}}\Po\big (\text{Set}(\overline x_1)\subset A\big)\cdots \Po\big( \text{Set}(\overline x_K)\subset A\big)\\
&\le &\frac 1{K!}\left (\sum_{\overline x\in \Omega_n}\Po\big(\text{Set}(\overline x)\subset A\big )   \right )^K\\
& = &\frac{\E(|\Omega_n(A)|)^K}{K!}\ll \frac{(n+m)^{-\delta K}}{K!}.
\end{eqnarray*} Then,
\begin{eqnarray*}\Po(\Omega_n(A) \text{ contains a K-d.s.v. for some } n)&\ll & \sum_{n}\Po\left (\Omega_n(A) \text{ contains a K-d.s.v. }\right )\\ &\ll & \sum_n\frac{(n+m)^{-\delta K}}{K!}=o_m(1).\end{eqnarray*}
\end{proof}

\subsection{The modular trick}
Before we explain  the strategy of the proof of Theorem \ref{Main2} we advance that we will deal with sums of the form
\begin{equation}\label{ss}\sum_{\substack{\overline x=(x_1,\dots,x_8)\\ x_1+x_2+x_3=n\\x_1+x_4=x_5+x_6=x_7+x_8\\ \{x_1,x_4\}\ne \{x_5,x_6\}\ne\{x_7,x_8\}}}\Po(x_1,\dots, x_8\in A)\end{equation} where  $A$ is a random sequence. If the coordinates of a vector $\overline x$ are  pairwise distinct, then $\Po(x_1,\dots, x_8\in A)=\prod_{i=1}^8\Po(x_i\in A)$ and the computation of \eqref{ss} is straightforward. Unfortunately we have also to consider those vectors with repeated coordinates. There are many patterns to consider and the computation of the sum above would be hard in a standard probabilistic space $\mathcal S(\gamma)$.  To reduce this unpleasant task we will restrict the sequences $A$ to be in some residue classes $s\in S,\pmod N$ for some $S\subset \Z_N$ given in Theorem \ref{modular}.  This trick will simplify a lot the casuistry of the possible coincidences between the coordinates in the proofs of Lemmas \ref{Tn} and \ref{B1n}.

\section{$B_2[2]$ sequences which are asymptotic basis of order $3$}
In this section we prove Theorem \ref{Main2}.
 \subsection{Strategy of the proof}We start by fixing a cyclic group $\Z_N$ and a set $S\subset \Z_N$ satisfying the conditions of Theorem \ref{modular}.
Throughout this section  we will consider the probabilistic space $\mathcal S_m(7/11;{\scriptstyle{S\mod N}}).$ Indeed,  it would work  any $\gamma,\quad \frac{5}8<\gamma<\frac 23$.
We consider the sequence of sets $$Q_n=\Big \{\omega=\{x_1,x_2,x_3\}:\ x_1+x_2+x_3=n,\ x_i\not \equiv x_j\pmod N,\ i\ne j\Big \}.$$
Given a sequence of positive integers $A$ we define, for each $n$, the set
$$Q_n(A)=\{\omega\in Q_n:\ \omega\subset A\}.$$
\begin{defn}[Lifting process] The $B_2[2]$-lifting process of a sequence $A$ consists in removing  from $A$ those elements $a_1\in A$ such that there exist $a_2,a_3,a_4,a_5,a_6\in A$ with  $a_1+a_2=a_3+a_4=a_5+a_6$ and $\{a_1,a_2\}\ne \{a_3,a_4\}\ne \{a_5,a_6\}$.

We  denote by $A_{B_2[2]}$ the surviving elements of $A$ after this process. The sequence $A_{B_2[2]}$ clearly  is a $B_2[2]$ sequence.
\end{defn}
We define
\begin{eqnarray*}T_{n}&=&\{\overline x=(x_1,\dots, x_8):\  \overline x\text{ satisfies  } \text{cond}(T_{n})\}\quad \text{ where}
\\ \text{cond}(T_{n})&:=&\begin{cases}\{x_1,x_2,x_3\}\in Q_n \\x_1+x_4=x_5+x_6=x_7+x_8,\qquad \{x_1,x_4\}\ne \{x_5,x_6\}\ne \{x_7,x_8\}
\\ x_1\equiv x_5\equiv x_7\pmod{N},\ x_4\equiv x_6\equiv x_8\pmod{N}.\end{cases}   \end{eqnarray*}
We define also $$T_n(A)=\{\overline x\in T_n:\ \text{Set}(\overline x)\subset A\}.$$
We will see that $|T_{n}(A)|$ is an upper bound for the number of the representations of $n$ counted in $Q_n(A)$ that are destroyed in the $B_2[2]$-lifting process of $A$ defined above.

Suppose that $\omega=\{x_1,x_2,x_3\}\in Q_n(A)$ contains an element, say $x_1$, which is removed in the $B_2[2]$-lifting process. Then there exist $x_4,x_5,x_6,x_7,x_8\in A$ such that $x_1+x_4=x_5+x_6=x_7+x_8$ with $\{x_1,x_4\}\ne \{x_5,x_6\}\ne \{x_7,x_8\}$. On the other hand, since all $x_i\equiv S\pmod N$ and $S$ is a Sidon set in $\Z_N$, interchanging $x_5$ with $x_6$ and $x_7$ with $x_8$ if needed, we have that $x_1\equiv x_5\equiv x_7\pmod N$ and $x_4\equiv x_6\equiv x_8\pmod N$. Thus, any $\omega\in Q_n(A)$ destroyed in the $B_2[2]$-lifting process is counted at least once in $T_n(A)$ and we have
$$|Q_n(A_{B_2[2]})|\ge |Q_n(A)|-|T_{n}(A)|.$$
Since $A_{B_2[2]}$ is a $B_2[2]$ sequence for any sequence $A$, to proof Theorem \ref{Main2} it is enough to prove that there exists a sequence $A$ such that $|Q_n(A)|\gg n^{\delta}$ for some $\delta>0$ and for $n$ large enough and such that  $|T_{n}(A)|\ll 1$. We perform these tasks in Propositions \ref{MP12} and \ref{MP22}.

\begin{prop}\label{MP12} With probability 1 we have $|Q_n(A)|\gg n^{1/11}$ for $n$ large enough.
\end{prop}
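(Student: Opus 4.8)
The plan is to apply Janson's inequality to the random variable $X_n(A)=|Q_n(A)|$. First I would compute the expected value $\mu_n = \E(|Q_n(A)|)$. Since $Q_n$ consists of sets $\omega=\{x_1,x_2,x_3\}$ with $x_1+x_2+x_3=n$ and the three residue classes mod $N$ pairwise distinct, and since each such triple is counted with probability $\Po(x_1,x_2,x_3\in A)=(x_1x_2x_3)^{-\gamma}$ when the $x_i>m$ and lie in $S\pmod N$, the sum defining $\mu_n$ is (up to the residue restrictions and the factor $1/3!$ for ordering) a three-fold convolution of the weight $x^{-\gamma}$ over $x\le n$. A routine estimate gives $\mu_n \gg n^{2-3\gamma}$; with $\gamma=7/11$ this is $n^{2-21/11}=n^{1/11}$. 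The residue condition mod $N$ only affects the implied constant (each class $s\in S$ contributes a positive proportion), and the requirement that the three classes be distinct removes a lower-order piece since $S$ has at least three elements for $N$ large; the restriction $x>m$ likewise only changes the constant. I would defer this computation to the final "calculations" section as the paper indicates, and simply record $\mu_n\gg n^{1/11}$.

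Next I would bound the dependency sum $\Delta(Q_n)=\sum_{\omega\sim\omega'}\Po(\omega\cup\omega'\subset A)$, where $\omega\sim\omega'$ means $\omega\cap\omega'\neq\emptyset$, $\omega\neq\omega'$. Two distinct triples summing to $n$ that share a common element are determined by $4$ free variables (one shared element, plus two more from each triple with one linear constraint each), so $|\{(\omega,\omega'):\omega\sim\omega'\}|$ is of size roughly $n^2$ and the associated probability is $\prod$ of five factors $x_i^{-\gamma}$; summing gives $\Delta(Q_n)\ll n^{3-5\gamma}=n^{3-35/11}=n^{-2/11}=o(1)$. In particular $\Delta(Q_n)<\mu_n$ for $n$ large, so Janson's inequality in the stated form yields $\Po(X_n\le \mu_n/2)\le \exp(-\mu_n/12)\le \exp(-c\,n^{1/11})$.

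Finally, since $\sum_n \exp(-c\,n^{1/11})<\infty$, the Borel–Cantelli lemma gives that with probability $1$ we have $|Q_n(A)|\ge \mu_n/2 \gg n^{1/11}$ for all sufficiently large $n$, which is the assertion. I expect the main obstacle to be purely bookkeeping: verifying the lower bound $\mu_n\gg n^{1/11}$ cleanly in the presence of the three constraints (the linear equation $x_1+x_2+x_3=n$, the membership in $S\pmod N$, and the pairwise-distinct-residues condition) and the truncation $x>m$, making sure none of these destroys the main term. The dependency estimate is easy because $\gamma>3/5$ forces $3-5\gamma<0$. No genuinely new idea is needed here beyond the standard Janson-plus-Borel–Cantelli scheme; the work is entirely in the deferred moment computation.
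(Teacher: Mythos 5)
Your proposal matches the paper's proof essentially step for step: apply Janson's inequality to $X_n=|Q_n(A)|$, compute $\mu_n\gg n^{2-3\gamma}=n^{1/11}$ and $\Delta(Q_n)\ll n^{3-5\gamma}=n^{-2/11}$ (deferring these two estimates to the final section, exactly as the paper does in Lemma \ref{Qn} and Proposition \ref{DeltaQn}), note $\Delta(Q_n)<\mu_n$, and finish with Borel--Cantelli. No substantive difference.
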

\begin{proof}
We apply Janson's inequality to $\Omega=Q_n$ and $X=|Q_n(A)|=\{\omega\in Q_n:\ \omega\subset A\}$ where $A$ is a random sequence in $\mathcal S_m(7/11,{\scriptstyle S\pmod N})$. In Lemma \ref{Qn} we prove that $\mu_n=\E(Q_n(A))\gg n^{1/11}$ and in Proposition \ref{DeltaQn} we prove that $\Delta(Q_n)\ll n^{-2/11}$ for $$\Delta(Q_n)=\sum_{\substack{\omega,\omega'\in Q_n\\ \omega\sim\omega'}}\Po(\omega,\omega'\in A).$$ Thus for $n$ large enough we have that $\Delta_n<\mu_n$ and Janson's inequality implies that $$\Po(|Q_n(A)|\le \mu_n/2)\le \exp\left (-\mu_n/12\right ).$$
Then for some $C>0$ we can write
$$\sum_n\Po(|Q_n(A)|\le \mu_n/2)<\sum_n \exp\left (-Cn^{1/11}\right )<\infty$$ and the Borell-Cantelli lemma implies that with probability $1$ we have $|Q_n(A)|\ge \mu_n/2\gg n^{1/11}$ for all $n$ large enough.
\end{proof}
In the proof of Proposition \ref{MP22} we use several times Lemma \ref{UU1}. We first introduce the following families of vectors, whose expected values are bounded in Lemma \ref{VV1}.
\begin{eqnarray}\label{set1}
\qquad U_{2r}&=&\{\overline x=(x_1, x_2):\ x_1+x_2=r,\ x_1\ne x_2\}\\
\qquad V_{2r}&=&\{\overline x=(x_1,x_2):\ x_1-x_2=r,\ x_1\ne x_2\}\nonumber \\
\qquad W_r&=&\{\overline x=(x_4,x_5,x_6,x_7,x_8):\ x_5+x_6-x_4=x_7+x_8-x_4=r,\ x_i\ne x_j\}.\nonumber
\end{eqnarray}
\begin{lem}\label{UU1}Let $X_r$ be any of the three families in \eqref{set1}. Then
$$\Po(X_r(A) \text{ contains a } 12\text{-d.s.v.} \text{ for some } r)=o_m(1).$$
\end{lem}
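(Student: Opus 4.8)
The strategy is to reduce Lemma \ref{UU1} to an application of Proposition \ref{ind}. That proposition says that if the expected size of a random family of vectors decays polynomially, $\E(|X_r(A)|)\ll (r+m)^{-\delta}$ for some $\delta>0$, then a $K$-d.s.v.\ appears for some $r$ with probability only $o_m(1)$, provided $K>1/\delta$. So for each of the three families $U_{2r}$, $V_{2r}$, $W_r$ it suffices to compute (or bound) the expected number of vectors and check that the decay exponent $\delta$ is large enough that $K=12$ satisfies $12>1/\delta$, i.e.\ $\delta>1/12$.

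First I would compute $\E(|U_{2r}(A)|)$ and $\E(|V_{2r}(A)|)$. Since $A\in\mathcal S_m(7/11;{\scriptstyle S\bmod N})$ and the two coordinates $x_1\ne x_2$ are distinct (hence the events $x_1\in A$, $x_2\in A$ are independent), we have $\E(|U_{2r}(A)|)=\sum_{x_1+x_2=r,\ x_1\ne x_2,\ x_i\equiv S,\ x_i>m}(x_1x_2)^{-7/11}$, and similarly for $V_{2r}$ with $x_1-x_2=r$. The constraint $x_i>m$ and $x_i\equiv S\pmod N$ only cuts down the sum; a routine estimate $\sum_{x_1+x_2=r}(x_1x_2)^{-\gamma}\ll r^{1-2\gamma}$ (splitting at $x_1\le r/2$ and using $x_2\asymp r$ there) gives $\E(|U_{2r}(A)|)\ll (r+m)^{1-14/11}=(r+m)^{-3/11}$, and the half-infinite sum defining $V_{2r}$ similarly contributes $\ll (r+m)^{-3/11}$ — here one must use $x_2>m$ to keep the tail in $x_2$ convergent, which is exactly the role of the parameter $m$. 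In both cases $\delta=3/11>1/12$, so Proposition \ref{ind} with $K=12$ applies and yields the claim for $U_{2r}$ and $V_{2r}$.

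The main work is the family $W_r$, a $5$-tuple $(x_4,x_5,x_6,x_7,x_8)$ with $x_5+x_6-x_4=x_7+x_8-x_4=r$ and all coordinates distinct. Eliminating $x_4=x_5+x_6-r=x_7+x_8-r$, the free parameters are essentially $(x_5,x_6,x_7)$ with $x_8=x_5+x_6-x_7$ determined, so $\E(|W_r(A)|)\ll\sum (x_4x_5x_6x_7x_8)^{-7/11}$ over the admissible range. I would bound this by $\sum_{x_5,x_6,x_7}(x_5x_6x_7(x_5+x_6-x_7))^{-7/11}$ (dropping the $x_4$ factor, which only helps), then sum in $x_8=x_5+x_6-x_7$ first, then in $x_7,x_6,x_5$, each summation costing at worst a factor $(r+m)^{1-7/11}=(r+m)^{4/11}$ when a variable ranges freely but absorbing into a convergent tail (using $>m$) when its exponent exceeds $1$. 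Keeping careful track of which of the $\approx r$-sized sums survive — there are two independent linear constraints, so morally three summations are "free" but the product of five factors with exponent $7/11$ beats them — should give $\E(|W_r(A)|)\ll (r+m)^{-\delta}$ with $\delta>1/12$; this bound is recorded as Lemma \ref{VV1}, so I may simply invoke it. Once all three expectations have the required polynomial decay, Proposition \ref{ind} with $h\le 5$ and $K=12$ finishes the proof, the three statements being combined by a union bound (a sum of three $o_m(1)$ terms is $o_m(1)$).

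The step I expect to be the genuine obstacle is the bookkeeping for $W_r$: one must verify that after using the two linear relations to eliminate two of the five variables, the remaining triple sum of $(\cdot)^{-7/11}$ factors genuinely decays in $r$ rather than merely staying bounded, and this requires that at least one of the surviving summation variables contributes a convergent tail (exponent $>1$ in that variable) rather than a growing $(r+m)^{4/11}$ factor. The choice $\gamma=7/11$ (with $5/8<\gamma<2/3$ allowed) is exactly what makes $5\gamma - (\text{number of free sums})$ land safely above $1/12$; pinning down that arithmetic, and confirming the $x_i>m$ truncation is what tames the otherwise-divergent tails, is the crux. The distinctness conditions $x_i\ne x_j$ are used only to guarantee independence of the boolean variables $I(x_i\in A)$, so they cause no extra trouble.
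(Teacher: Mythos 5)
Your proposal matches the paper's proof exactly: the paper also invokes Lemma \ref{VV1} for the bounds $\E(|U_{2r}(A)|),\E(|V_{2r}(A)|)\ll (r+m)^{-3/11}$ and $\E(|W_r(A)|)\ll (r+m)^{-2/11}$, then applies Proposition \ref{ind} with $K=12>11/2$. Your extra sketch of how the $W_r$ sum is estimated is simply a preview of the proof of Lemma \ref{VV1} itself, not a different route.
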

\begin{proof}
Lemma \ref{VV1} implies that $\E(|X_r(A)|)\ll (r+m)^{-2/11}$ and then apply Proposition \ref{ind}.
\end{proof}
\begin{prop}\label{MP22} With probability $1-o_m(1)$,  $|T_n(A)|\le 10^{28}$ holds for any $n$.
\end{prop}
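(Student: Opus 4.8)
The plan is to run the vectorial sunflower machinery of \S3 on the families $T_n$. By Corollary~\ref{KV} with $h=8$ it is enough to fix $K=12$ and to show that, with probability $1-o_m(1)$, no random family $T_n(A)$ contains a vectorial sunflower of $K$ petals for any $n$; indeed this gives $|T_n(A)|\le 8!\,((8^2-8+1)\cdot 12)^8=8!\cdot 684^8<10^{28}$ simultaneously for all $n$, which is exactly the assertion.

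To rule out vectorial sunflowers I would sum, over $n$ and over the finitely many types $I\subseteq\{1,\dots,8\}$, the probability that $T_n(A)$ contains a vectorial sunflower of $12$ petals of type $I$. When $I=\emptyset$ such a configuration is a $12$-d.s.v.\ inside $T_n(A)$, and here I would apply Proposition~\ref{ind}: the input is the bound $\E(|T_n(A)|)\ll (n+m)^{-1/11}$, which is Lemma~\ref{Tn} (its proof, which uses the restriction of the random sequence to residues in $S$ modulo $N$ in order to keep the list of coincidence patterns short, is deferred to \S6), and since $\delta=1/11$ satisfies $K=12>1/\delta$, Proposition~\ref{ind} yields probability $o_m(1)$. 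When $I\ne\emptyset$, I delete from each of the $12$ petal vectors the coordinates indexed by $I$; the resulting vectors form a $12$-d.s.v.\ and all of their entries still lie in $A$. Since $\{x_1,x_2,x_3\}\in Q_n$ and $x_1\equiv x_5\equiv x_7$, $x_4\equiv x_6\equiv x_8\pmod N$ while $S$ is a Sidon set in $\Z_N$, almost every pattern of coincidences among $x_1,\dots,x_8$ is impossible; after discarding the $O(1)$ vectors coming from each surviving degenerate pattern, the remaining constraints collapse so that a $\mathrm{Set}$-injective projection onto a suitable pair (or quintuple) of coordinates carries the $12$-d.s.v.\ into $U_{2r}(A)$, $V_{2r}(A)$ or $W_r(A)$ of \eqref{set1}, for some $r$ depending on $n$ and on the common coordinates of the sunflower. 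By Lemma~\ref{UU1} this event has probability $o_m(1)$. Summing the finitely many contributions, and then invoking Corollary~\ref{KV}, finishes the proof.

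The step I expect to be the main obstacle is the case $I\ne\emptyset$: one has to enumerate the patterns of coincidences among $x_1,\dots,x_8$ compatible with $\text{cond}(T_n)$ and verify, case by case, that after removing the coordinates of $I$ the surviving coordinates are constrained either by a single relation of the shape $y+y'=r$ or $y-y'=r$ (two coordinates left, producing $U_{2r}$ or $V_{2r}$) or by the defining relations of $W_r$ (five coordinates left), the degenerate patterns contributing only $O(1)$ vectors and hence no $12$-d.s.v. This is precisely where the Sidon property of $S$ in $\Z_N$ does the bookkeeping work by eliminating almost all of the possible collisions, and it is the only genuinely laborious part. The choice $K=12$ is dictated by the exponent $\gamma=7/11$: Proposition~\ref{ind} requires every relevant decay exponent to exceed $1/K$, and the bounds $1/11$ (Lemma~\ref{Tn}) and $2/11$ (Lemma~\ref{VV1}, which underlies Lemma~\ref{UU1}) both exceed $1/12$, so all the series summed above converge and every probability collected is $o_m(1)$.
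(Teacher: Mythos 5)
Your proposal takes essentially the same route as the paper: apply Corollary~\ref{KV} with $h=8$ and $K=12$, prove that vectorial sunflowers of $12$ petals do not occur in any $T_n(A)$ by a case analysis on the type $I$, handle $I=\emptyset$ via Lemma~\ref{Tn} and Proposition~\ref{ind}, and for $I\ne\emptyset$ project the resulting $12$-d.s.v.\ onto a suitable pair or quintuple of coordinates so that it lands in $U_{2r}(A)$, $V_{2r}(A)$ or $W_r(A)$ and invoke Lemma~\ref{UU1}. One small point to straighten out when you actually write the case analysis: the mechanism that excludes the ``missing'' types $I$ in Proposition~\ref{MP22} is not the Sidon property of $S\pmod N$ (that property and the coincidence-pattern bookkeeping it enables are what make the deferred expected-value computation of Lemma~\ref{Tn} tractable), but the purely algebraic observation that if $I$ contains all but one of the indices appearing in one of the defining equations $x_1+x_2+x_3=n$, $x_1+x_4=x_5+x_6$, $x_1+x_4=x_7+x_8$, $x_5+x_6=x_7+x_8$, then the remaining coordinate is forced and a vectorial sunflower of that type cannot exist; the surviving types are exactly those with $|I\cap\{1,2,3\}|\le 1$ and, on the $B_2[2]$ block, $I$ meeting each of $\{1,4,5,6\}$, $\{1,4,7,8\}$, $\{5,6,7,8\}$ in at most two elements, and those are what the paper reduces to $U_{2r}$, $V_{2r}$, $W_r$.
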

\begin{proof} We claim that

\textbf{Claim.}
\emph{With probability $1-o_m(1)$, $T_n(A)$ does not contain vectorial sunflowers of $12$ petals for any $n$.}

Assuming the Claim  we can apply Corollary \ref{KV} to the families $T_n$ to deduce that with probability $1-o_m(1)$, we have that $|T_n(A)|\le 8!((8^2-8+1)12)^8<10^{28}$ for all $n$. Hence the Claim implies Proposition \ref{MP22}.

We prove the Claim for the distinct possible types $I\subset \{1,\dots,8\}$ of the vectorial sunflowers in $T_n(A)$. The types we analyze below will cover all the cases, as we will explain later.

\begin{itemize}
  \item [1.]$I=\emptyset$.   Lemma \ref{Tn}  $\implies \E(|T_n(A)|)\ll (n+m)^{-1/11}$.

 Proposition \ref{ind} $\implies \Po(T_n(A) \text{ has  a $12$-d.s.v.  for some } n)=o_m(1)\implies $

 $\implies$ the Claim holds for vectorial sunflowers of type $I=\emptyset$.

\

  \item [2.]$|I\cap \{1,2,3\}|=1$.   Suppose that $I\cap \{1,2,3\}=\{1\}$. The other two cases are similar.

If  $T_{n}(A)$ contains  a vectorial sunflower (of $12$ petals) of type $I$ for some $n$ (denote by $l_1$ the common first coordinate) then there is a $12$-d.s.v. $\overline x_j=(x_{2j},x_{3j}),\ j=1,\dots ,12$ such that $x_{2j}+x_{3j}=n-l_1$. Thus, for $r=n-l_1$, $U_{2r}(A)$ contains a $12$-d.s.v. and Lemma \ref{UU1} implies the Claim for vectorial sunflowers of this type.

\

  \item [3.]$|I\cap \{1,4,5,6,7,8\}|=1$.
  Suppose that $I\cap \{1,4,5,6,7,8\}=\{1\}$. The other  cases are similar.

If  $T_{n}(A)$ contains  a vectorial sunflower (of $12$ petals) of type $I$ for some $n$ (denote by $l_1$  the common first  coordinate) then there is a $12$-d.s.v. $\overline x_j=(x_{4j},x_{5j},x_{6j},x_{7j},x_{8j}),\ j=1,\dots 12$ such that $x_{5j}+x_{6j}=x_{7j}+x_{8j}=l_1+x_{4j}$. Thus, for  $r=l_1$, $W_{r}(A)$ contains a $12$-d.s.v. and Lemma \ref{UU1} implies  the Claim for vectorial sunflowers of this type.

\

  \item [4.]$|I\cap \{1,4,5,6\}|=2$ or $|I\cap \{1,4,7,8\}|=2$ or $|I\cap \{5,6,7,8\}|=2$.
  Suppose that $|I\cap \{1,4,5,6\}|=2$. The other  cases are similar. We need to distinguish between two essentially distinct cases:
  \begin{itemize}
    \item [i)] $I\cap \{1,4,5,6\}=\{1,4\}$.
    If $T_n(A)$ contains a vectorial sunflower (of $12$ petals) of  type $I$ (denote by $l_1,l_4$  the value of the common coordinates) then there is a $12$-d.s.v. $\overline x_j=(x_{5j},x_{6j}),\ j=1,\dots 12$ such that $x_{5j}+x_{6j}=l_1+l_4$. Thus, for  $r=l_1+l_4$, $U_{2r}(A)$ has a $12$-d.s.v. Lemma \ref{UU1} implies the Claim for vectorial sunflowers of this type.

\

     \item [ii)] $I\cap \{1,4,5,6\}=\{1,5\}$. If $T_n(A)$ contains a vectorial sunflower (of $12$ petals) of type $I$ (denote by $l_1,l_5$  the value of the common coordinates and assume that $l_1>l_5$) we have that there is an $12$-d.s.v. $\overline x_j=(x_{4j},x_{6j}),\ j=1,\dots 12$ such that $x_{6j}-x_{4j}=l_1-l_5$. Thus, for $r=l_1-l_5$, $V_{2r}(A)$ contains a $12$-disjoint set and Lemma \ref{UU1} implies the Claim for vectorial sunflowers of this type.
              \end{itemize}
 \end{itemize}
The sets $I$ considered in the previous analysis cover all the possible cases. The point is that if  the indexes of one of the equations $x_1+x_2+x_3=n$, $\ x_1+x_4=x_5+x_6$, $\ x_1+x_4=x_7+x_8$, $\ x_5+x_6=x_7+x_8$ are all in $I$ except one of them, then a vectorial sunflower of that type $I$ cannot exist. For example the cases such that $|I\cap\{1,2,3\}|=2$ are not possible because if two vectors have the same coordinates $x_1,x_2$, also $x_3$ must be the same in both vectors. For example, if $x_4,x_5,x_6\in I$ then these coordinates must be the same in all the vectors of the sunflower, but also $x_1$ should be the same in all of them. The reader can check that the types not studied above are of this kind. Thus we have proved the Claim.
\end{proof}
\section{Sidon basis of order $3+\varepsilon$} In this section we will prove Theorem \ref{Main1}. The proof follows the same steps than the proof of Theorem \ref{Main2} but is a little more involved because we have to distinguish an element $x_i\le n^ {\varepsilon}$.
\subsection{Strategy of the proof of Theorem \ref{Main1}}We start by fixing a cyclic group $\Z_N$ and a set $S\subset \Z_N$ satisfying the conditions of Theorem \ref{modular3}. Throughout this section  we will consider the probabilistic space $\mathcal S_m(\gamma,{\scriptstyle S\pmod{N}})$ with $$\gamma=\frac 23+\frac{\varepsilon}{9+9\varepsilon}.$$ Indeed we could take any $\gamma$ with $\frac{2+3\varepsilon}{3+4\varepsilon}<\gamma<\frac{2+\varepsilon}{3+\varepsilon}$.
We consider the families of sets
\begin{eqnarray*}
R_n&=&\Big \{ \omega=\{x_1,x_2,x_3,x_4\}:\ \text{ satisfying the conditions cond}(R_n)\Big \}\quad  \text{ where}\\
\text{cond}(R_n)&=&\begin{cases} x_1+x_2+x_3+x_4=n,\\ \min(x_1,x_2,x_3,x_4)\le n^{\varepsilon}
\\ x_i\not \equiv x_j\pmod{N},\ 1\le i<j\le 4.\end{cases}  \end{eqnarray*}

Given a sequence of positive integers $A$ we define the families:
 \begin{eqnarray*}R_n(A)&=&\Big \{\omega\in R_n:\ \omega\subset A\Big \}. \end{eqnarray*}
\begin{defn}[Sidon lifting process] The Sidon lifting process of a sequence $A$ consists in removing  from $A$ those elements $a\in A$ such that there exist $a',a'',a'''\in A$ with $a+a'=a''+a''',\ \{a+a'\}\ne \{a'',a'''\}$.

We  denote by $A_{\text{Sidon}}$ the surviving elements of $A$ after this process.
\end{defn}

We define
\begin{eqnarray*}B_{n}(A)&=&\{\overline x=(x_1,\dots, x_7):\ x_i\in A,\ \overline x\text{ satisfies  } \text{cond}(B_{n})\}\quad \text{ where}
\\ \text{cond}(B_{n})&:=&\begin{cases}\{x_1,x_2,x_3,x_4\}\in R_n \\x_1+x_5=x_6+x_7,\qquad \{x_1,x_5\}\ne \{x_6,x_7\}\\ x_1\equiv x_6\pmod{N},\ x_5\equiv x_7\pmod{N}.\end{cases}   \end{eqnarray*}

By a similar argument as the one used in the proof of Theorem \ref{Main2} we can see that $|B_{n}(A)|$ is an upper bound for the number of the representations of $n$ counted in $R_n(A)$ but destroyed in the Sidon lifting process of $A$. Thus,
$$|R_n(A_{\text{Sidon}})|\ge |R_n(A)|-|B_{n}(A)|.$$

Since $A_{\text{Sidon}}$ is a Sidon sequence, to prove Theorem \ref{Main1} it is enough to prove that there exists a sequence $A$ such  that $|R_n(A)|\gg n^{\delta}$ for some $\delta>0$ and for $n$ large enough, and such that  $|B_{n}(A)|\ll 1$.
\begin{prop}\label{MP1} With probability 1 we have that $|R_n(A)|\gg n^{\frac{2\varepsilon^2}{9+9\varepsilon}}$ for $n$ large enough.
\end{prop}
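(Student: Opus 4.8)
The plan is to follow the proof of Proposition~\ref{MP12} at the structural level: apply Janson's inequality to the family $\Omega=R_n$ and the random variable $X=|R_n(A)|$, with $A$ a random sequence in $\mathcal S_m(\gamma,{\scriptstyle S\pmod N})$, $\gamma=\frac23+\frac{\varepsilon}{9+9\varepsilon}$. Everything hinges on two moment estimates, which I would isolate as separate lemmas and push to the computational last section, exactly as Lemma~\ref{Qn} and Proposition~\ref{DeltaQn} were split off there: first, a lower bound for the mean $\mu_n:=\E(|R_n(A)|)\gg n^{2\varepsilon^2/(9+9\varepsilon)}$; second, an upper bound $\Delta(R_n)\ll n^{-\eta}$ for some $\eta>0$, where as usual $\Delta(R_n)=\sum_{\omega\sim\omega'}\Po(\omega\cup\omega'\subset A)$.

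For the first moment, write $x_4\le n^\varepsilon$ for the distinguished small coordinate. The main term of $\mu_n$ comes from quadruples with $x_1,x_2,x_3\asymp n$ and $x_4\asymp n^\varepsilon$: there are $\asymp n^\varepsilon$ admissible values of $x_4$ and $\asymp n^2$ admissible pairs $(x_1,x_2)$ (which determine $x_3=n-x_1-x_2-x_4$), each quadruple lying in $A$ with probability $\asymp(x_1x_2x_3)^{-\gamma}x_4^{-\gamma}\asymp n^{-(3+\varepsilon)\gamma}$. The constraints that the $x_i$ lie in prescribed pairwise distinct residue classes of $\Z_N$ — where $S\subset\Z_N$ is the modular Sidon basis furnished by Corollary~\ref{modular3} — only change the implied constants, and this is precisely the point of the modular trick: it guarantees that enough representations of the residue class of $n$ survive. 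One gets $\mu_n\asymp n^{\varepsilon+2-(3+\varepsilon)\gamma}$, and substituting the chosen $\gamma$ shows the exponent equals $\tfrac{2\varepsilon^2}{9+9\varepsilon}$.

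For the correlation term, one sums over pairs $\omega,\omega'\in R_n$ sharing exactly $k$ elements, $1\le k\le 3$; each overlap pattern contributes a power of $n$ whose exponent one checks is strictly negative for every $0<\varepsilon<1$. The delicate cases are $k=1$, where the fewest degrees of freedom are lost, and within them one must keep track of whether the common element is the distinguished coordinate $\le n^\varepsilon$ or one of the large ones, since this changes which variables range over $[1,n^\varepsilon]$ rather than $[1,n]$. Again, restricting to residue classes $s\in S\pmod N$ with $S$ Sidon collapses almost all of the coincidence casuistry, just as in Lemmas~\ref{Tn} and~\ref{B1n}.

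Granting these two bounds, $\Delta(R_n)<\mu_n$ for $n$ large, so Janson's inequality gives $\Po(|R_n(A)|\le\mu_n/2)\le\exp(-\mu_n/12)\le\exp(-Cn^{2\varepsilon^2/(9+9\varepsilon)})$ for some $C>0$. This is summable over $n$, so the Borel--Cantelli lemma yields that with probability $1$ we have $|R_n(A)|\ge\mu_n/2\gg n^{2\varepsilon^2/(9+9\varepsilon)}$ for all $n$ large enough. The main obstacle is the bookkeeping in $\Delta(R_n)$: the extra condition $\min(x_1,\dots,x_4)\le n^\varepsilon$ roughly doubles the number of overlap patterns compared with the $B_2[2]$ analysis behind Theorem~\ref{Main2}, and each pattern must be weighted correctly according to which of its variables are small — which is exactly why it is natural to relegate this computation to the final section.
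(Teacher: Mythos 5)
Your proposal follows the paper's proof exactly: Janson's inequality applied to $R_n$ in $\mathcal S_m(\gamma,{\scriptstyle S\bmod N})$ with $\gamma=\frac23+\frac{\varepsilon}{9+9\varepsilon}$, the mean lower bound $\mu_n\gg n^{2\varepsilon^2/(9+9\varepsilon)}$ (the paper's Lemma~\ref{Rn}, using Corollary~\ref{modular3} for the residue classes), the correlation bound $\Delta(R_n)\ll n^{-\eta}$ (Proposition~\ref{Deltan}), and Borel--Cantelli. The only slip is a trivial one: in the $\Delta(R_n)$ analysis only overlaps of size $k=1$ or $k=2$ can occur, since the linear constraint $x_1+x_2+x_3+x_4=n$ forces two quadruples sharing three elements to coincide.
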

\begin{proof}
We apply Janson's inequality to $\Omega=R_n$ and $X=|R_n(A)|=\{\omega\in R_n:\ \omega\subset A\}$ where $A$ is a random sequence in $\mathcal S_m(\gamma,{\scriptstyle S\mod N})$. In Proposition \ref{Rn} we prove that $\mu_n=\E(R_n(A))\gg n^{\frac{2\varepsilon^2}{9+9\varepsilon}}$ and in Proposition \ref{Deltan} we prove that $\Delta(R_n)\ll  n^{\frac{-3\varepsilon+2\varepsilon^2}{9+9\varepsilon}}$ for $$\Delta(R_n)=\sum_{\substack{\omega,\omega'\in R_n\\ \omega\sim\omega'}}\Po(\omega,\omega'\in A).$$ Thus for $n$ large enough we have $\Delta(R_n)<\mu_n$ and Janson's inequality implies that $$\Po(|R_n(A)|\le \mu_n/2)\le \exp\left (-\mu_n/12\right ).$$
Then for some $C>0$ we have
$$\sum_n\Po(|R_n(A)|\le \mu_n/2)<\sum_n \exp\left (-Cn^{\frac{2\varepsilon^2}{9+9\varepsilon}}\right )<\infty$$ and the Borell-Cantelli lemma implies that with probability $1$ we have $|R_n(A)|\ge \mu_n/2\gg n^{\frac{2\varepsilon^2}{9+9\varepsilon}}$ for all $n$. This proves Proposition \ref{MP1}.
\end{proof}

In the proof of Proposition \ref{MP2} we use several times Lemma \ref{UUU}. We first introduce the following families of vectors, whose expected values are bounded in Lemma \ref{VV2}.
\begin{eqnarray}\label{set2}
\qquad U_{2r}&=&\{\overline x=(x_1, x_2):\ x_1+x_2=r,\ x_1\ne x_2\}\\
\qquad U_{3r}&=&\{\overline x=(x_1, x_2,x_3):\ x_1+x_2+x_3=r,\ x_i\ne x_j\}\nonumber \\
\qquad V_{2r}&=&\{\overline x=(x_1,x_2):\ x_1-x_2=r,\ x_1\ne x_2\}\nonumber \\
\qquad V_{3r}&=&\{\overline x=(x_1,x_2,x_3):\ x_1+x_2-x_3=r,\ x_i\ne x_j\}.\nonumber
\end{eqnarray}
\begin{lem}\label{UUU}Let $K$ a positive integer such that $K>18/\varepsilon^2$. Then for any of the four families $X_r$ in \eqref{set2}
$$\Po(X_r(A) \text{ contains a } K\text{-d.s.v.} \text{ for some } r)=o_m(1).$$
\end{lem}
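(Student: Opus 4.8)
The plan is to deduce the lemma from Proposition \ref{ind} alone, exactly as Lemma \ref{UU1} was deduced in the treatment of Theorem \ref{Main2}. First I would appeal to Lemma \ref{VV2}, whose role is precisely to record a bound of the form $\E(|X_r(A)|)\ll (r+m)^{-\delta}$, uniform in $r$, for each of the four families in \eqref{set2}, with a single $\delta=\delta(\varepsilon)>0$. The binding constraint comes from the three‑coordinate families $U_{3r}$ and $V_{3r}$: since every element of $A$ is $>m$ and lies in a fixed union of residue classes mod $N$, a standard estimate gives $\E(|U_{3r}(A)|),\ \E(|V_{3r}(A)|)\ll (r+m)^{2-3\gamma}$, whereas the two‑coordinate families satisfy the stronger $\E\ll (r+m)^{1-2\gamma}$. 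With $\gamma=\tfrac{2}{3}+\tfrac{\varepsilon}{9+9\varepsilon}$ one has $2-3\gamma=-\tfrac{\varepsilon}{3+3\varepsilon}$ and $1-2\gamma<-\tfrac{1}{3}$, so the single value $\delta=\tfrac{\varepsilon}{3+3\varepsilon}$ works for all four families.

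Next I would check the arithmetic linking $\delta$ to the hypothesis on $K$. One has $1/\delta=\tfrac{3+3\varepsilon}{\varepsilon}=\tfrac{3}{\varepsilon}+3$, and for $0<\varepsilon<1$ this is $\le 18/\varepsilon^2$ (clearing denominators, the inequality reads $18\ge 3\varepsilon+3\varepsilon^2$, which is trivial). Hence the assumption $K>18/\varepsilon^2$ forces $K>1/\delta$, and Proposition \ref{ind}, applied separately to each of the four families $\{X_r\}$ with this $\delta$, yields $\Po(X_r(A)\text{ contains a }K\text{-d.s.v. for some }r)=o_m(1)$, which is exactly the assertion.

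There is no real difficulty internal to this lemma: it is bookkeeping once Lemma \ref{VV2} is available, and all the genuine work — evaluating the exponents of $\gamma$ that appear in those expected values, and verifying that the bounds are uniform in $r$ (for small $r$ the families are empty because of the cutoff $x_i>m$, while for $r\gg m$ one has $r+m\asymp r$) — is deferred to the final section. The constant $18/\varepsilon^2$ in the statement is merely a clean over‑estimate of $1/\delta$; any value exceeding $1/\delta$ would serve equally well.
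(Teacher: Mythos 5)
Your proposal is correct and follows essentially the same route as the paper: invoke Lemma \ref{VV2} for a uniform bound $\E(|X_r(A)|)\ll (r+m)^{-\delta}$ and then apply Proposition \ref{ind}. The only cosmetic difference is that the paper passes through the weaker intermediate exponent $-\varepsilon/6\le-\varepsilon^2/18$ so that $1/\delta=18/\varepsilon^2$ matches the hypothesis on $K$ exactly, whereas you keep the sharper $\delta=\varepsilon/(3+3\varepsilon)$ and verify $18/\varepsilon^2\ge 1/\delta$ by hand.
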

\begin{proof}
Lemma \ref{VV2} implies  $\E(|X_r(A)|)\ll (r+m)^{\varepsilon/6}\ll (r+m)^{-\varepsilon^2/18}$ and then apply Proposition \ref{ind}.
\end{proof}

\begin{prop}\label{MP2} With probability $1-o_m(1)$ we have  $ |B_n(A)|\ll 1.$
\end{prop}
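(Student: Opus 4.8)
The plan is to mirror the proof of Proposition \ref{MP22}. Since the vectors of $B_n$ have $h=7$ coordinates, it suffices to prove that, with probability $1-o_m(1)$, $B_n(A)$ contains no vectorial sunflower of $K$ petals for any $n$, where $K$ is a fixed integer chosen with $K>18/\varepsilon^2$ and also $K>1/\delta$, $\delta>0$ being the exponent in the bound $\E(|B_n(A)|)\ll(n+m)^{-\delta}$ supplied by Lemma \ref{B1n} (proved in the last section, by analogy with Lemma \ref{Tn}). Indeed, granting this, Corollary \ref{KV} applied to the families $B_n$ yields $|B_n(A)|\le 7!\,\big((7^2-7+1)K\big)^7=7!\,(43K)^7\ll 1$ for all $n$ with probability $1-o_m(1)$, which is the statement.

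To rule out the vectorial sunflowers I would run through the possible types $I\subseteq\{1,\dots,7\}$, exactly as in Proposition \ref{MP22}. For $I=\emptyset$ a vectorial sunflower of $K$ petals is a $K$-d.s.v., so by Lemma \ref{B1n} and Proposition \ref{ind} (which applies because $K>1/\delta$) the probability that $B_n(A)$ contains one for some $n$ is $o_m(1)$. For nonempty $I$ I would first discard the impossible types: if $I$ contains every index but one of the equation $x_1+x_2+x_3+x_4=n$, or every index but one of $x_1+x_5=x_6+x_7$, then after fixing the common coordinates $l_i$ ($i\in I$) the missing coordinate is determined, hence constant across all petals, contradicting the d.s.v.\ condition on the non-$I$ coordinates; and $I=\{1,\dots,7\}$ is excluded since a sunflower has at least two distinct vectors.

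For the remaining nonempty types I would fix the $l_i$ ($i\in I$) and exploit that the non-$I$ coordinates of the petals form a $K$-d.s.v., as does any sub-tuple of them. If $I\cap\{1,2,3,4\}=\emptyset$, then $\emptyset\neq I\subseteq\{5,6,7\}$ with $I\neq\{5,6,7\}$, and substituting the $l_i$ into $x_1+x_5=x_6+x_7$ leaves, among the indices $\{1,5,6,7\}$, a relation of the form $x_a+x_b=r$, $x_a-x_b=r$ or $x_a+x_b-x_c=r$ with $r$ constant (according to which of $5,6,7$ lie in $I$); this produces a $K$-d.s.v.\ in one of $U_{2r}(A)$, $V_{2r}(A)$, $V_{3r}(A)$ for some $r$, contradicting Lemma \ref{UUU} with probability $1-o_m(1)$ (swapping two coordinates if needed so that a difference is nonnegative). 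If instead $I\cap\{1,2,3,4\}\neq\emptyset$, then, having removed the impossible types, either $\{1,2,3,4\}\subseteq I$, whence $1\in I$ and the equation $x_1+x_5=x_6+x_7$ reduces us to the analysis of $I\cap\{5,6,7\}$ just described, or $1\le|I\cap\{1,2,3,4\}|\le 2$, so that the two or three free indices among $\{1,2,3,4\}$ satisfy $\sum x_i=r$ with $r$ constant, giving a $K$-d.s.v.\ in $U_{2r}(A)$ or $U_{3r}(A)$, again contradicting Lemma \ref{UUU}. A union bound over the finitely many types $I$ finishes the argument.

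The hard part is the bookkeeping of the types $I$ (as in Proposition \ref{MP22}), and the one point needing care is that, when $I$ avoids all of $\{1,2,3,4\}$, one must extract the d.s.v.\ from $x_1+x_5=x_6+x_7$ rather than from $x_1+x_2+x_3+x_4=n$, since the latter would leave four free coordinates and we have no family $U_{4r}$ at our disposal. Coincidences among coordinates within a single petal are treated as in Proposition \ref{MP22}; this is precisely the casuistry that the modular restriction $x_i\equiv S\pmod N$ is there to simplify. The size condition $\min(x_1,x_2,x_3,x_4)\le n^{\varepsilon}$ and all the congruence conditions in $\text{cond}(R_n)$ and $\text{cond}(B_n)$ only shrink the families and are inherited by the petals, so they pose no obstruction to the reductions; the quantitative inputs are Lemma \ref{B1n} (for $I=\emptyset$) and Lemma \ref{UUU} (for $I\neq\emptyset$).
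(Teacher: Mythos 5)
Your proposal is correct and follows essentially the same route as the paper: apply Corollary \ref{KV} to reduce to ruling out vectorial sunflowers of $K$ petals, handle $I=\emptyset$ via Lemma \ref{B1n} and Proposition \ref{ind}, discard the impossible types where all but one index of one of the two defining equations lies in $I$, and for the remaining nonempty types fix the common coordinates and extract a $K$-d.s.v.\ in one of $U_{2r}$, $U_{3r}$, $V_{2r}$, $V_{3r}$, invoking Lemma \ref{UUU}. Your grouping of the nonempty types (by whether $I\cap\{1,2,3,4\}$ is empty, full, or has size $1$ or $2$) is a harmless reorganization of the paper's case list $\{|I\cap\{1,2,3,4\}|\in\{1,2\},\ |I\cap\{1,5,6,7\}|\in\{1,2\}\}$, and the reductions land on the same target families.
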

\begin{proof}
\textbf{Claim:} \emph{Let $K$ a positive integer such that $K>18/\varepsilon^2$. Then  $B_n(A)$ does not contain vectorial sunflowers of $K$ petals for all $n$, with probability $1-o_m(1).$}

 Assuming the Claim  we can apply Corollary \ref{KV} to the families $B_n$ to deduce that $|B_n(A)|\le 7!((7^2-7+1)K)^7$ for all $n$,
with probability $1-o_m(1)$.

We prove the Claim for the distinct possible type $I\subset \{1,\dots,7\}$ of the vectorial sunflowers in $B_n(A)$. The types we analyze below will cover all the cases. It is clear that if $I\cap \{1,2,3,4\}=\{1,2,3\}$, then vectorial sunflowers of type $I$ cannot exists because the conditions on $B_{n}$ implies that also the 4th-coordinate is common for all vectors. The same argument works for any $I$ such that $|I\cap \{1,2,3,4\}|=3$ or $|I\cap \{1,5,6,7\}|=3$. Also it is clear that there do not exist vectorial sunflowers of type $I=[7]$. Thus we have to consider  the  types: $I=\emptyset$, $|I\cap \{1,2,3,4\}|=1$, $|I\cap \{1,2,3,4\}|=2$, $|I\cap \{1,5,6,7\}|=1$, $|I\cap \{1,5,6,7\}|=2$.

\begin{itemize}
  \item [1.]$I=\emptyset$. Lemma \ref{B1n} $\implies \E(|B_n(A)|)\ll (n+m)^{-\frac{\varepsilon^2}{18}}$.

  Proposition \ref{ind} $\implies \Po(B_n(A) \text{ has  a $K$-d.s.v.  for some } n)=o_m(1)\implies $

  $\implies$ the Claim holds for vectorial sunflowers of type $I=\emptyset$.

  \

  \item [2.]$|I\cap \{1,2,3,4\}|=1$.  Suppose that $I\cap \{1,2,3,4\}=\{1\}$. The other three cases are similar.

If  $B_{n}(A)$ contains  a vectorial sunflower of $K$ petals of this type for some $n$ (denote by $l_1$ to the common first coordinate) we have that there exists an $K$-d.s.v. $\overline x_j=(x_{2j},x_{3j},x_{4j}),\ \text{Set}(\overline x_j)\subset A,\ j=1,\dots, K$ such that $x_{2j}+x_{3j}+x_{4j}=n-l_1$. Thus, for $r=n-l_1$,  $U_{3r}(A)$ contains a $K$-d.s.v. and Lemma \ref{UUU} implies the Claim for vectorial sunflowers of this type.

\
  \item [3.]$|I\cap \{1,2,3,4\}|=2$.  Suppose that $I\cap \{1,2,3,4\}=\{1,2\}$. The other six cases are similar.

If  some $B_{n}(A)$ contains  a vectorial sunflower of $K$ petals of this type  for some $n$ (denote by $l_1,l_2$  the common first and second coordinate) we have that there exists an $K$-d.s.v. $\overline x_j=(x_{3j},x_{4j}),\ \text{Set}(\overline x_j)\subset A,\ j=1,\dots K$ such that $x_{3j}+x_{4j}=n-l_1-l_2$. Thus, for $r=n-l_1-l_2$ we have that $U_{2r}(A)$ contains a $K$-d.s.v. and Lemma \ref{UUU} implies the Claim for vectorial sunflowers of this type.

\

  \item [4.]$|I\cap \{1,5,6,7\}|=1$.  Suppose that $I\cap \{1,5,6,7\}=\{1\}$. The other four cases are similar.

  If some $B_{n}(A)$ contains  a vectorial sunflower of $K$ petals of this type   (denote by $l_1$ the common first  coordinate) we have that there exists an $K$-d.s.v. $\overline x_j=(x_{5j},x_{6j},x_{7j}),\ \text{Set}(\overline x_j)\subset A,\ j=1,\dots K$ such that $x_{6j}+x_{7j}-x_{5j}=l_1$. Thus, for $r=l_1$ we have that $V_{3r}(A)$ contains a $K$-d.s.v. and Lemma \ref{UUU} implies the Claim for vectorial sunflowers of this type.

\

  \item [5.]$|I\cap \{1,5,6,7\}|=2$.  We  distinguish
 two essentially distinct cases:

\begin{itemize}
  \item [i)] $I\cap \{1,5,6,7\}=\{1,5\}.$ The case $I\cap \{1,5,6,7\}=\{6,7\}$ is similar.

  If  some $B_{n}(A)$ contains  a vectorial sunflower of $K$ petals of this type  (denote by  $l_1,l_5$  the common first and 5 coordinate)  we have that there exists an $K$-d.s.v. $\overline x_j=(x_{6j},x_{7j}),\ \text{Set}(\overline x_j) \subset A,\ j=1,\dots K$ such that $x_{6j}+x_{7j}=l_1+l_5$. Thus, for $r=l_1+l_5$ we have that $U_{2r}(A)$ contains a $K$-d.s.v. and Lemma \ref{UUU} implies the Claim for vectorial sunflowers of this type.

  \

  \item [ii)]$I\cap \{1,5,6,7\}=\{1,6\}$.  The case $I\cap \{1,5,6,7\}=\{5,7\}$ is similar.

  If  some $B_{n}(A)$ contains  a vectorial sunflower of $K$ petals of this type  (denote by  $l_1,l_6$  the common first and 5 coordinate and assume that $l_1>l_6$)  we have that there exists an $K$-d.s.v. $\overline x_j=(x_{5j},x_{7j}),\ \text{Set}(\overline x_j) \subset A,\ j=1,\dots K$ such that $x_{7j}-x_{5j}=l_1-l_6$. Thus, for $r=l_1-l_6$ we have that $V_{2r}(A)$ contains a $K$-d.s.v. and Lemma \ref{UUU} implies the Claim for vectorial sunflowers of this type.
\end{itemize}
 \end{itemize}
\end{proof}
\section{Expected values} We define the quantities:
\begin{eqnarray*}
\sigma_{\alpha,\beta}(n)&=&\sum_{\substack{x,y\ge 1\\ x+y=n}}x^{-\alpha}y^{-\beta}= \sum_{\substack{1\le x<n}}x^{-\alpha}(n-x)^{-\beta},\\
\tau_{\alpha,\beta}(n)&=&\sum_{\substack{x,y\ge 1\\ x-y=n}}x^{-\alpha}y^{-\beta}=\sum_{\substack{1\le x}}x^{-\alpha}(n+x)^{-\beta}
\end{eqnarray*}
and in general
\begin{eqnarray*}
\sigma_{\alpha,\beta}(n;m)=\sum_{\substack{x,y>m\\ x+y=n}}x^{-\alpha}y^{-\beta},\qquad
\tau_{\alpha,\beta}(n;m)=\sum_{\substack{x,y>m\\ x-y=n}}x^{-\alpha}y^{-\beta}.
\end{eqnarray*}
The next Lemma will be apply later many times. We will write $\stackrel{*}{\ll} $ to mean that we are using Lemma \ref{ab} in an inequality. All $x_i$ appearing in this section are positive integers.
\begin{lem}\label{ab}For any $\alpha,\beta<1$ with $\alpha+\beta>1$ we have
\begin{itemize}
  \item [i)] $\sigma_{\alpha,\beta}(n;m)\ll  (n+m)^{1-\alpha-\beta}.\qquad \quad {\rm iii)}\ \sigma_{\alpha,\beta}(n)\ll  n^{1-\alpha-\beta}.$
  \item [ii)] $ \tau_{\alpha,\beta}(n;m)\ll  (n+m)^{1-\alpha-\beta}.\qquad \quad \ {\rm iv)}\ \tau_{\alpha,\beta}(n)\ll  n^{1-\alpha-\beta}.$
\end{itemize}
\end{lem}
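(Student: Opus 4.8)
\textbf{Proof plan for Lemma \ref{ab}.}

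The plan is to prove all four estimates by splitting the sum according to whether one variable is small or comparable to $n+m$. I would first treat $\sigma_{\alpha,\beta}(n;m)$. Write the sum over $x,y>m$ with $x+y=n$, so $m<x<n-m$. By symmetry it suffices to bound the contribution of $m<x\le n/2$ (so that $y=n-x\ge n/2$) and double it. On this range $y=n-x\asymp n$, hence $y^{-\beta}\ll n^{-\beta}$, and the remaining sum is $\sum_{m<x\le n/2}x^{-\alpha}$. Since $\alpha<1$ this is $\ll (n/2)^{1-\alpha}\ll n^{1-\alpha}$ if $\alpha\ge 0$, and if $\alpha<0$ (allowed since only $\alpha<1$ is assumed, though in our applications $\alpha,\beta\ge 0$), the sum is still $\ll n\cdot n^{-\alpha}=n^{1-\alpha}$; in all cases we get $\ll n^{1-\alpha}$. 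Multiplying, the half-sum is $\ll n^{1-\alpha-\beta}$. Finally, $n\le n+m$ and $1-\alpha-\beta<0$ give $n^{1-\alpha-\beta}\ll (n+m)^{1-\alpha-\beta}$ only in the wrong direction, so instead one should note $n\asymp n+m$ is false in general; the correct move is: when $m\le n$ we have $n+m\asymp n$ so the bound $n^{1-\alpha-\beta}\ll(n+m)^{1-\alpha-\beta}$ holds, and when $m>n$ the sum $\sigma_{\alpha,\beta}(n;m)$ is empty (since $x>m>n$ forces $x+y>n$), so the estimate is vacuous. This yields (i), and (iii) is the special case $m=0$ (with the convention $x,y\ge 1$), handled identically with $x\le n/2$.

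For $\tau_{\alpha,\beta}(n;m)$ I would argue similarly but now the sum is infinite: $x-y=n$, $x,y>m$, so $x=y+n$ with $y>m$. Split at $y\le n$ and $y>n$. For $m<y\le n$ we have $x=y+n\asymp n$, so $x^{-\alpha}\ll n^{-\alpha}$, and $\sum_{m<y\le n}y^{-\beta}\ll n^{1-\beta}$ since $\beta<1$; the contribution is $\ll n^{1-\alpha-\beta}$. For $y>n$ we have $x=y+n\asymp y$, so $x^{-\alpha}y^{-\beta}\ll y^{-\alpha-\beta}$, and $\sum_{y>n}y^{-\alpha-\beta}\ll n^{1-\alpha-\beta}$ because $\alpha+\beta>1$ makes the tail convergent with the stated order. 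Adding the two pieces gives $\tau_{\alpha,\beta}(n;m)\ll n^{1-\alpha-\beta}$, and again replacing $n$ by $n+m$ is legitimate exactly when $m\le n$, while for $m>n$ one repeats the same split at $y=m$ instead of $y=n$: for $m<y\le n+m$, $x\asymp n+m$ and the $y$-sum is $\ll (n+m)^{1-\beta}$; for $y>n+m$, $x\asymp y$ and the tail is $\ll (n+m)^{1-\alpha-\beta}$. This gives (ii) uniformly, and (iv) is the case $m=0$.

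I do not expect any serious obstacle here; the only point requiring a little care is the bookkeeping of the two regimes $m\le n$ versus $m>n$ so that the bound is genuinely in terms of $n+m$ and holds \emph{for all} $n,m$ (as required by the $\ll$ convention fixed in the introduction, which unlike $\gg$ does not permit finitely many exceptions). A secondary subtlety is that the hypotheses only assume $\alpha,\beta<1$ and $\alpha+\beta>1$, not $\alpha,\beta\ge 0$; one should check the elementary bounds $\sum_{k\le t}k^{-\alpha}\ll t^{1-\alpha}$ and $\sum_{k>t}k^{-s}\ll t^{1-s}$ (for $s>1$) remain valid for possibly negative exponents, which they do since $1-\alpha>0$ and $1-s<0$ control the partial and tail sums respectively by comparison with the corresponding integrals.
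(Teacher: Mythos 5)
Your proof is correct and follows essentially the same route as the paper's: split $\sigma$ at $x=n/2$, split $\tau$ at a threshold near $\max(n,m)$, and bound each piece by elementary comparison of $\sum_{k\le t}k^{-\alpha}$ with $t^{1-\alpha}$ and of the tail $\sum_{k>t}k^{-\alpha-\beta}$ with $t^{1-\alpha-\beta}$; the only cosmetic difference is that the paper uses the cleaner case split $n<2m$ (sum empty) versus $n\ge 2m$ for $\sigma$, and $n<m$ versus $n\ge m$ for $\tau$. One small point in your favor: in the $n<m$ case for $\tau$ the paper bounds $(n+x)^{-\beta}\le x^{-\beta}$, which silently uses $\beta\ge0$ even though the lemma only hypothesizes $\beta<1$; your two-regime split using $x\asymp y$ avoids that assumption, which you correctly flag.
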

\begin{proof}  If $n<2m$, i) holds because $\sigma_{\alpha,\beta}(n;m)=0$. If $n\ge 2m$ we have
\begin{eqnarray*}
\sigma_{\alpha,\beta}(n;m)&\le  & \sum_{1\le x\le n/2}x^{-\alpha}(n-x)^{-\beta}+\sum_{n/2<x< n}x^{-\alpha}(n-x)^{-\beta}\\
&\ll & \sum_{1\le x\le n/2}x^{-\alpha}n^{-\beta}+\sum_{n/2<x<n}n^{-\alpha}(n-x)^{-\beta}\\
&\ll & n^{1-\alpha-\beta}\ll (n+m)^{1-\alpha-\beta}.
\end{eqnarray*}
To prove ii) we distinguish two cases.  If $n<m$  we have
\begin{eqnarray*}
\tau_{\alpha,\beta}(n;m)\le \sum_{x> m}x^{-\alpha}(n+x)^{-\beta}&\le  & \sum_{x> m}x^{-\alpha-\beta}\ll m^{1-\alpha-\beta}\ll (n+m)^{1-\alpha-\beta}.
\end{eqnarray*}
If $n\ge m$ we have
\begin{eqnarray*}
\tau_{\alpha,\beta}(n;m)\le \sum_{x> m}x^{-\alpha}(n+x)^{-\beta}&= & \sum_{m< x<n}x^{-\alpha}(n+x)^{-\beta}+\sum_{x\ge n}x^{-\alpha}(n+x)^{-\beta}\\
&\ll &n^{-\beta}\sum_{1\le x<n}x^{-\alpha}+\sum_{x\ge n}x^{-\alpha-\beta}\ll n^{1-\alpha-\beta}\ll (n+m)^{1-\alpha-\beta}.
\end{eqnarray*}
The cases iii) and iv) follow from i) and ii) taking $m=0$.
\end{proof}
\begin{lem}\label{abab} Let $a,b$ be positive integers. Then for any $\gamma,\quad 1/2<\gamma<1$,
$$\sum_{1\le x}x^{-\gamma}(x+a)^{-\gamma}(x+b)^{1-2\gamma}\ll (ab)^{1-2\gamma}.$$
\end{lem}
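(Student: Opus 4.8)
The plan is to take advantage of the hypothesis $\gamma>1/2$, which makes the exponent $1-2\gamma$ strictly negative; this means the factor $(x+b)^{1-2\gamma}$ is \emph{decreasing} in $x$ and can simply be discarded by the pointwise bound $(x+b)^{1-2\gamma}\le b^{1-2\gamma}$, valid for every $x\ge 1$ since $x+b\ge b$ and $t\mapsto t^{1-2\gamma}$ is decreasing on $(0,\infty)$. Once this factor is pulled out of the sum, what remains is exactly a value of the function $\tau_{\gamma,\gamma}$, which has already been estimated in Lemma \ref{ab}.

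Carrying this out: I would first write
$$\sum_{1\le x}x^{-\gamma}(x+a)^{-\gamma}(x+b)^{1-2\gamma}\le b^{1-2\gamma}\sum_{1\le x}x^{-\gamma}(x+a)^{-\gamma}=b^{1-2\gamma}\,\tau_{\gamma,\gamma}(a),$$
and then apply part iv) of Lemma \ref{ab} with $\alpha=\beta=\gamma$: the required conditions $\alpha,\beta<1$ and $\alpha+\beta>1$ are precisely $\gamma<1$ and $\gamma>1/2$, so $\tau_{\gamma,\gamma}(a)\ll a^{1-2\gamma}$. Combining the two estimates gives
$$\sum_{1\le x}x^{-\gamma}(x+a)^{-\gamma}(x+b)^{1-2\gamma}\ll a^{1-2\gamma}b^{1-2\gamma}=(ab)^{1-2\gamma},$$
which is the claim.

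There is no real obstacle here; the only thing to notice is that one should not try to exploit the factor $(x+b)^{1-2\gamma}$ at all — because its exponent is negative, the crude pointwise bound by $b^{1-2\gamma}$ already produces the optimal power of $b$, and everything else is the previously proved $\tau$-estimate. (Alternatively one could split the range of summation at $\min(a,b)$ and at $\max(a,b)$ and bound each of the three pieces directly, which reproduces the same estimate but with considerably more case analysis; the argument above bypasses all of it.) Note also that although the summand is not symmetric in $a$ and $b$, the symmetric right-hand side $(ab)^{1-2\gamma}$ comes out automatically from this computation.
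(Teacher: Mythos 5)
Your proof is correct, and it is a genuine (if modest) simplification of the paper's argument. Both proofs hinge on the same two facts: the pointwise bound $(x+b)^{1-2\gamma}\le b^{1-2\gamma}$, valid because $x+b\ge b$ and the exponent $1-2\gamma$ is negative, and the estimate $\tau_{\gamma,\gamma}(a)\ll a^{1-2\gamma}$ from Lemma~\ref{ab}. The paper, after assuming $a<b$, applies the first bound only on the range $x\le b$ and treats the tail $x>b$ separately by bounding each factor by a power of $x$, obtaining the two contributions $b^{1-2\gamma}a^{1-2\gamma}$ and $b^{2-4\gamma}$, of which the second is absorbed by the first because $a<b$. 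You notice that the pointwise bound $(x+b)^{1-2\gamma}\le b^{1-2\gamma}$ is already valid for \emph{all} $x\ge1$, so no split is needed: the whole sum is at most $b^{1-2\gamma}\tau_{\gamma,\gamma}(a)\ll(ab)^{1-2\gamma}$ directly. This eliminates both the case analysis and the ``suppose $a<b$'' normalization, which is a small but real gain in clarity, especially since the summand is not symmetric in $a$ and $b$ and the paper's reduction to the case $a<b$ requires one to re-check that the split should go at $\max(a,b)$.
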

\begin{proof}
Suppose that $a<b$ and split the sum:
\begin{eqnarray*}
S&=&\sum_{x\le b}x^{-\gamma}(x+a)^{-\gamma}(x+b)^{1-2\gamma}+
\sum_{x> b}x^{-\gamma}(x+a)^{-\gamma}(x+b)^{1-2\gamma}\\
&\ll &b^{1-2\gamma}\sum_{x\le b}x^{-\gamma}(x+a)^{-\gamma}+\sum_{x> b}x^{1-4\gamma}{\stackrel{*}{\ll}} b^{1-2\gamma}a^{1-2\gamma}+b^{2-4\gamma}\ll (ab)^{1-2\gamma}.
\end{eqnarray*}
\end{proof}
\subsection{Expected values in $\mathcal S_m(7/11,{\scriptstyle S\pmod N})$}
\begin{lem}\label{VV1}We  have
\begin{enumerate}
\item[i)]$\E(|U_{2r}(A)|)\ll  (r+m)^{-3/11}.$
  \item [ii)]$\E(|V_{2r}(A)|)\ll  (r+m)^{-3/11}.$
  \item [iii)]$\E(|W_{r}(A)|)\ll  (r+m)^{-2/11}.$
\end{enumerate}
\end{lem}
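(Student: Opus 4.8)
The plan is to expand each of the three expectations over its family of vectors and reduce every sum to the elementary estimates of Lemma \ref{ab}, specialized to $\gamma=7/11$ (so $1-2\gamma=-3/11$ and $2-4\gamma=-6/11$). The starting observation is that every vector occurring in $U_{2r}$, $V_{2r}$ or $W_r$ has pairwise distinct coordinates, so the single-element events $\{x_i\in A\}$ are independent and hence
$$\Po(\text{Set}(\overline x)\subset A)=\prod_i\Po(x_i\in A)\le \prod_i x_i^{-\gamma},$$
using that $\Po(x\in A)\le x^{-\gamma}$ always holds (it is $0$ unless $x>m$ and $x\equiv S\pmod N$). Dropping the congruence and size restrictions only increases the sums, so it suffices to estimate the resulting ``full'' sums.

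For i), I would write
$$\E(|U_{2r}(A)|)\le \sum_{\substack{x_1+x_2=r\\ x_1,x_2>m}}x_1^{-\gamma}x_2^{-\gamma}=\sigma_{\gamma,\gamma}(r;m)\stackrel{*}{\ll}(r+m)^{1-2\gamma}=(r+m)^{-3/11},$$
applying Lemma \ref{ab} i) with $\alpha=\beta=\gamma=7/11$ (both $<1$, sum $14/11>1$). Statement ii) is proved identically, with $\sigma_{\gamma,\gamma}(r;m)$ replaced by $\tau_{\gamma,\gamma}(r;m)$ and Lemma \ref{ab} ii) invoked in its place.

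For iii) I would sum over $x_4$ first. Writing $s=x_4+r$, the defining conditions of $W_r$ become $x_5+x_6=x_7+x_8=s$; summing over the two pairs independently (which overcounts, and discarding the remaining distinctness constraints only enlarges the sum, losing at most $O(1)$ terms per $x_4$) gives
$$\E(|W_r(A)|)\le \sum_{x_4>m}x_4^{-\gamma}\Big(\sum_{\substack{x_5+x_6=s\\ x_5,x_6>m}}x_5^{-\gamma}x_6^{-\gamma}\Big)^2\stackrel{*}{\ll}\sum_{x_4>m}x_4^{-\gamma}(x_4+r)^{2-4\gamma},$$
where in the last step I used $\sigma_{\gamma,\gamma}(s;m)\ll(s+m)^{1-2\gamma}\le(x_4+r)^{1-2\gamma}$ (the exponent being negative). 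Since $2-4\gamma=-6/11<0$ and the exponents $4\gamma-2=6/11$ and $\gamma=7/11$ are both in $(0,1)$ with sum $13/11>1$, the last sum is $\tau_{4\gamma-2,\gamma}(r;m)\stackrel{*}{\ll}(r+m)^{1-13/11}=(r+m)^{-2/11}$, which is iii).

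The only step needing a little care is the nesting in iii): one must check that after discarding pairwise distinctness the inner sum is a genuine $\sigma$-sum and that the sign $2-4\gamma<0$ makes the outer sum a $\tau$-sum with exponents in the admissible range of Lemma \ref{ab}; beyond that, everything is a direct substitution into Lemma \ref{ab}.
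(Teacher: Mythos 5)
Your proof is correct and follows the paper's argument essentially verbatim: parts i) and ii) are direct applications of Lemma \ref{ab}, and for iii) you sum over $x_4$, apply the $\sigma$-estimate to the inner squared sum, and then finish with a $\tau$-estimate, exactly as the paper does. The only cosmetic difference is that you drop the $+m$ inside the inner bound (using that the exponent is negative) before invoking the $\tau$-estimate, whereas the paper carries it along; both are valid.
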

\begin{proof}
\begin{eqnarray*}
\E(|U_{2r}(A)|)&=& \sum_{\substack{x,y>m\\x+y=r}}(xy)^{-\gamma}\ll (r+m)^{1-2\gamma}\ll (r+m)^{-3/11}.\qquad \qquad \qquad \qquad \qquad \\
\E(|V_{2r}(A)|)&=&  \sum_{\substack{x,y>m\\x-y=m}}(xy)^{-\gamma}\ll (r+m)^{1-2\gamma}\ll (r+m)^{-3/11}.\qquad \qquad \qquad \qquad \qquad  \\
\E(|W_{r}(A)|)&\le &\sum_{\substack{x_4,x_5,x_6,x_7,x_8>m\\x_5+x_6=x_7+x_8=r+x_4}}(x_4x_5x_6x_7x_8)^{-\gamma}
\ll \sum_{x_4}x_4^{-\gamma}\Big (\sum_{\substack{x,y>m\\ x+y=r+x_4}}(xy)^{-\gamma}\Big )^2\\
&{\stackrel{*}{\ll}} & \sum_{x_4}x_4^{-\gamma}(r+m+x_4)^{2-4\gamma}{\stackrel{*}{\ll}} (r+m)^{3-5\gamma}\ll (r+m)^{-2/11}.
\end{eqnarray*}
\end{proof}

\begin{lem}\label{Qn}
$\E(|Q_n(A)|)\gg n^{1/11}$ for $n$ large enough.
\end{lem}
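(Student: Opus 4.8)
The plan is a direct first--moment computation, made clean by the modular set--up. By the independence of the events $x\in A$ in $\mathcal S_m(7/11;\,{\scriptstyle S\mod N})$, a set $\omega=\{x_1,x_2,x_3\}\in Q_n$ contributes $(x_1x_2x_3)^{-7/11}$ to $\E(|Q_n(A)|)$ when all $x_i>m$ and $x_i\bmod N\in S$, and contributes $0$ otherwise. Thus
$$\E(|Q_n(A)|)=\frac16\sum(x_1x_2x_3)^{-7/11},$$
the sum running over ordered triples $(x_1,x_2,x_3)$ of positive integers with $x_1+x_2+x_3=n$, each $x_i>m$, each $x_i\bmod N\in S$, and $x_1,x_2,x_3$ lying in pairwise distinct residue classes modulo $N$. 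First I would keep only the triples with $x_1,x_2\in[n/4,3n/8]$, which forces $x_3=n-x_1-x_2\in[n/4,n/2]$; on this box $x_1x_2x_3\le n^3$, so $(x_1x_2x_3)^{-7/11}\ge n^{-21/11}$, and for $n$ large in terms of $m$ we automatically have $x_i>m$. It then suffices to show that the number $\mathcal N(n)$ of admissible ordered triples in this box satisfies $\mathcal N(n)\gg n^2$, since then $\E(|Q_n(A)|)\gg n^{-21/11}\cdot n^2=n^{2-21/11}=n^{1/11}$.

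To bound $\mathcal N(n)$ I would group the triples by their residues $(s_1,s_2,s_3)=(x_1,x_2,x_3)\bmod N$. By Theorem \ref{modular}, the residue class of $n$ modulo $N$ can be written as $s_1+s_2+s_3\equiv n\pmod N$ with $s_1,s_2,s_3\in S$ pairwise distinct (its proof in fact produces $\gg\sqrt N$ such representations, but a single one already suffices here). Fixing one such triple $(s_1,s_2,s_3)$, the integers $x_1\in[n/4,3n/8]$ with $x_1\equiv s_1\pmod N$ number $\gg n/N$, and likewise the choices of $x_2$; once $x_1,x_2$ are chosen, $x_3=n-x_1-x_2$ lies in $[n/4,n/2]$ and automatically satisfies $x_3\equiv n-s_1-s_2\equiv s_3\pmod N$, and since $s_1,s_2,s_3$ are pairwise distinct so are the residue classes of $x_1,x_2,x_3$. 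This yields $\gg(n/N)^2\gg n^2$ admissible triples, where the implied constants are allowed to depend on the fixed data $N,S,m$; hence $\mathcal N(n)\gg n^2$.

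The only genuine ingredient is Theorem \ref{modular}: it is what guarantees that the favourable box contains admissible residue patterns at all, and this is exactly the purpose of working inside residue classes coming from a modular Sidon basis. Everything else is the elementary estimate $(x_1x_2x_3)^{-7/11}\ge n^{-21/11}$ on the box together with the arithmetic $2-3\cdot\frac7{11}=\frac1{11}$, so I do not anticipate any real difficulty in this lemma.
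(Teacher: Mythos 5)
Your proposal is correct and follows essentially the same route as the paper: both bound $\Po(x_1,x_2,x_3\in A)$ from below by a quantity of size $n^{-3\gamma}=n^{-21/11}$, invoke Theorem \ref{modular} to fix a triple $(s_1,s_2,s_3)\in S^3$ of pairwise distinct residues with $s_1+s_2+s_3\equiv n\pmod N$, and then count $\gg n^2$ lifts of that residue pattern summing to $n$. Your added restriction of $x_1,x_2$ to a box of length $\asymp n$ is a harmless cosmetic variant of the paper's substitution $x_i=s_i+Ny_i$ and count over $y_1+y_2+y_3=l$.
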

\begin{proof} We have
$\E(|Q_{n}(A)|)=\sum_{\{x_1,x_2,x_3\}\in Q_n}\Po(x_1,x_2,x_3\in A)\ge n^{-3\gamma}|Q'_n|,$ where $$Q_n'=\Big \{ \{x_1,x_2,x_3\}\in Q_n:\ x_i\equiv S\pmod N,\ x_i>m\Big \}.$$
We observe that $S\subset \Z_N$ is such that $n\equiv s_1+s_2+s_3\pmod N$ for some pairwise distinct $s_1,s_2,s_3$. We fix $s_1,s_2,s_3$ and write $x_i=s_i+Ny_i$ and $l=\frac{n-s_1-s_2-s_3}{N}$.
Then $|Q'_n|\ge |Q_n^*| $ where
 \begin{eqnarray*}|Q^*_n|=\left | \Big \{\{y_1,y_2,y_3\}:\ y_1+y_2+y_3=l:\ y_i>m \Big \}\right |\asymp l^2\gg n^{2}, \end{eqnarray*}
 if $l>10mN$. Thus, $\E(|Q_n(A)|)\ge n^{-3\gamma}|Q_n^*|\gg n^{-3\gamma+2}\gg n^{1/11}$ for $n$ large enough.
\end{proof}
\begin{prop}\label{DeltaQn}
$\Delta(Q_n)\ll n^{-2/11}.$
\end{prop}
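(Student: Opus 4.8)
The plan is to first determine the shape of a pair $\omega\sim\omega'$ in $Q_n$. Since $\omega,\omega'$ are distinct $3$-element sets each summing to $n$, they cannot share two elements (the third would then be forced to equal $n$ minus the sum of the two shared ones, giving $\omega=\omega'$), so $\omega\cap\omega'$ consists of a single element $a$ and $\omega\cup\omega'$ is a $5$-element set. Writing $\{b,c\}=\omega\setminus\{a\}$ and $\{d,e\}=\omega'\setminus\{a\}$, we have $b+c=d+e=n-a$, the pairs $\{b,c\}$ and $\{d,e\}$ are disjoint, and all five integers exceed $m$ (they also lie in classes of $S$, but this will not be needed).

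Next I would pass to an upper bound by discarding the congruence constraints and the within-triple Sidon conditions (which only restrict the range of summation) and, using the independence of the events $x\in A$, bound $\Po(\omega\cup\omega'\subset A)\le\prod_{x\in\omega\cup\omega'}x^{-\gamma}$. Grouping the sum according to the common element $a$ and the two independently chosen pairs summing to $n-a$, and absorbing the bounded factor coming from the orderings of $\{\omega,\omega'\}$ and of the elements within each pair, this yields
\[
\Delta(Q_n)\ \ll\ \sum_{a>m}a^{-\gamma}\Big(\sum_{\substack{x,y>m\\x+y=n-a}}(xy)^{-\gamma}\Big)^{2}
=\sum_{a>m}a^{-\gamma}\,\sigma_{\gamma,\gamma}(n-a;m)^{2}.
\]

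It then remains to estimate this by two applications of Lemma \ref{ab}. Since $\gamma=7/11$ satisfies $\gamma<1$ and $2\gamma>1$, part (i) gives $\sigma_{\gamma,\gamma}(n-a;m)\ll(n-a+m)^{1-2\gamma}$, hence
\[
\Delta(Q_n)\ \ll\ \sum_{a}a^{-\gamma}(n-a+m)^{2-4\gamma}\ \le\ \sum_{1\le a<n}a^{-\gamma}(n-a)^{-(4\gamma-2)}=\sigma_{\gamma,\,4\gamma-2}(n),
\]
where I used that $2-4\gamma<0$ to drop the $+m$ and that the terms with $a\ge n$ vanish. Now $4\gamma-2=6/11<1$ and $\gamma+(4\gamma-2)=5\gamma-2=13/11>1$, so Lemma \ref{ab}(iii) applies and gives $\sigma_{\gamma,4\gamma-2}(n)\ll n^{1-\gamma-(4\gamma-2)}=n^{3-5\gamma}=n^{-2/11}$, as claimed.

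I do not anticipate a real obstacle: the proof is essentially two invocations of Lemma \ref{ab} once the decomposition of $\omega\cup\omega'$ is in place. The only points needing care are the observation that $\omega\sim\omega'$ forces exactly one common element — so that the union has five distinct elements and the product of probabilities factors with no repeated variable — and the bookkeeping with the parameter $m$ and the summation endpoints.
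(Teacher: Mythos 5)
Your proof is correct and takes essentially the same route as the paper: observe that two distinct $\omega,\omega'\in Q_n$ with $\omega\sim\omega'$ share exactly one element, majorize $\Delta(Q_n)$ by a sum over the common element times the square of a pair-sum, and then apply Lemma~\ref{ab} twice to obtain $n^{3-5\gamma}=n^{-2/11}$. The only difference is that you carry the parameter $m$ and the disjointness of $\omega\cup\omega'$ explicitly, while the paper silently drops the $x_i>m$ constraints in the upper bound; both are valid.
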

\begin{proof}
If $\omega\sim \omega'$ with $\omega,\omega'\in Q_n$, both sets have exactly one common element, say $x_1$. Thus
\begin{eqnarray*}\Delta(Q_n)=\sum_{\substack{\omega,\omega'\in Q_n\\ \omega\sim \omega'}}\Po(\omega,\omega'\subset A)&\ll &\sum_{\substack{x_1,x_2,x_3,x_2',x_3'\\ x_2+x_3=n-x_1\\x_2'+x_3'=n-x_1}}(x_1x_2x_3x_2'x_3')^{-\gamma}\\
&\le &\sum_{x_1<n}x_1^{-\gamma}\Big (\sum_{\substack{x,y\\ x+y=n-x_1}}(xy)^{-\gamma}   \Big )^2{\stackrel{*}{\ll}} \sum_{x_1<n}x_1^{-\gamma} (n-x_1)^{2-4\gamma}\\ &{\stackrel{*}{\ll}} & n^{3-5\gamma}\ll n^{-2/11}. \end{eqnarray*}
\end{proof}
\begin{lem}\label{Tn}
$\E(|T_n(A)|)\ll (n+m)^{-1/11}. $
\end{lem}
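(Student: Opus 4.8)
We must bound $\E(|T_n(A)|)$, where $T_n$ consists of vectors $\overline x=(x_1,\dots,x_8)$ with $\{x_1,x_2,x_3\}\in Q_n$ (so $x_1+x_2+x_3=n$, the $x_i$ pairwise incongruent mod $N$), together with a "triple coincidence" $x_1+x_4=x_5+x_6=x_7+x_8$ with $\{x_1,x_4\}\ne\{x_5,x_6\}\ne\{x_7,x_8\}$ and the modular alignment $x_1\equiv x_5\equiv x_7$, $x_4\equiv x_6\equiv x_8\pmod N$.

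**Plan.** The strategy is the standard one for these expected-value computations: write $\E(|T_n(A)|)=\sum_{\overline x\in T_n}\Po(\mathrm{Set}(\overline x)\subset A)$, and split the sum according to which coordinates of $\overline x$ coincide. For vectors with pairwise distinct coordinates the probability factorizes as $\prod_{i=1}^8 x_i^{-\gamma}$, and here the modular trick pays off: since all $x_i$ lie in fixed residue classes of a Sidon set $S\pmod N$, the relations $x_1+x_4=x_5+x_6=x_7+x_8$ force $x_1\equiv x_5\equiv x_7$ and $x_4\equiv x_6\equiv x_8\pmod N$ automatically (these are exactly the alignment conditions built into $\mathrm{cond}(T_n)$), so essentially the only coincidence patterns one has to worry about are $x_5=x_8$, $x_6=x_7$ (the two sums $x_5+x_6$ and $x_7+x_8$ sharing a term), together with $x_1,x_2,x_3$ distinct from everything and from each other. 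This is precisely the point of restricting to $\mathcal S_m(\gamma;{\scriptstyle S\mod N})$: it collapses the casuistry to a handful of cases.

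The main term is the generic case. Summing first over $x_2,x_3$ with $x_2+x_3=n-x_1$ gives a factor $\sigma_{\gamma,\gamma}(n-x_1;m)\stackrel{*}{\ll}(n+m)^{1-2\gamma}$ by Lemma \ref{ab}; summing over $x_5,x_6$ with $x_5+x_6=x_1+x_4$ and over $x_7,x_8$ with $x_7+x_8=x_1+x_4$ each contributes a factor $\sigma_{\gamma,\gamma}(x_1+x_4;m)\stackrel{*}{\ll}(x_1+x_4+m)^{1-2\gamma}$. We are thus left with
\begin{equation*}
\E(|T_n(A)|)\ll (n+m)^{1-2\gamma}\sum_{x_1,x_4>m}x_1^{-\gamma}x_4^{-\gamma}(x_1+x_4+m)^{2-4\gamma},
\end{equation*}
and the inner double sum is handled by Lemma \ref{abab} (with the roles of $a,b$ played by $x_1+m$, or by applying Lemma \ref{ab} again to the $x_4$-sum and then the $x_1$-sum): it is $\ll (n+m)^{3-5\gamma}$ up to lower-order, so altogether $\E(|T_n(A)|)\ll (n+m)^{1-2\gamma}\cdot (n+m)^{3-5\gamma}$-type exponent — one checks that with $\gamma=7/11$ every such exponent is at most $-1/11$. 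Concretely: $x_1+x_4$ ranges up to $\asymp n$ because $x_1<n$, and the full chain of summations yields an exponent $(1-2\gamma)+(3-5\gamma)$ worth of contribution only after the intermediate $\sigma$ estimates are combined; plugging $\gamma=7/11$ gives $(1-14/11)+(3-35/11)=-3/11-2/11=-5/11\le -1/11$, with the remaining degenerate cases (where one coincidence $x_5=x_8$ etc.\ removes a variable, hence removes a factor $(n+m)^{-\gamma}$ and gains back at most $(n+m)^{\gamma}$ worth of range) only smaller.

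**Main obstacle.** The only real work is bookkeeping the coincidence patterns and checking that each degenerate one is genuinely smaller than the generic bound $(n+m)^{-1/11}$ — there is no single hard inequality, just the discipline of not missing a case and of always applying Lemma \ref{ab} in the direction that keeps $\alpha,\beta<1$ and $\alpha+\beta>1$. The conditions $\{x_1,x_4\}\ne\{x_5,x_6\}\ne\{x_7,x_8\}$ guarantee that in every case at least the "full" structure $x_2+x_3=n-x_1$ and one nontrivial two-term sum survive, so each case carries at least $\sigma_{\gamma,\gamma}$ twice, which is what produces the negative exponent; the modular alignment conditions are what prevent, e.g., $x_1=x_6$ from being a separate free coincidence. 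One concludes $\E(|T_n(A)|)\ll(n+m)^{-1/11}$, as claimed.
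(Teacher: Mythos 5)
Your identification of the coincidence cases is wrong, and that is the heart of this lemma. You claim the only coincidences to guard against are $x_5=x_8$ and $x_6=x_7$, while $x_1,x_2,x_3$ are ``distinct from everything.'' But $x_5=x_8$ combined with $x_5+x_6=x_7+x_8$ forces $x_6=x_7$, and then $\{x_5,x_6\}=\{x_7,x_8\}$, which $\mathrm{cond}(T_n)$ explicitly forbids---so these coincidences never occur and you are guarding against a non-event. The coincidences that \emph{can} occur (because the modular alignment does not exclude them) are precisely the ones the paper's proof handles: $x_7=x_8$ (possible whenever $x_1\equiv x_4\pmod N$, and similarly $x_5=x_6$), and $x_4$, $x_6$, or $x_8$ coinciding with $x_2$ or $x_3$ (since only $x_1$, not $x_2,x_3$, is aligned with $x_5,x_7$). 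Your blanket assumption that $x_2,x_3$ are disjoint from the rest throws away exactly the nontrivial degenerate sums ($S_3$ and $S_4$ in the paper), which contribute at the same order $n^{4-7\gamma}$ as the bound and cannot be dismissed as ``only smaller.''

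The generic-case estimate is also incorrect as written. You bound $\sigma_{\gamma,\gamma}(n-x_1;m)$ by $(n+m)^{1-2\gamma}$, but Lemma~\ref{ab} gives $\ll\big((n-x_1)+m\big)^{1-2\gamma}$, and since $1-2\gamma<0$ this is an \emph{increasing} function as $n-x_1$ shrinks; for $x_1$ near $n$ it greatly exceeds $(n+m)^{1-2\gamma}$, so the inequality is flipped. One must keep the factor $(n-x_1)^{1-2\gamma}$ inside and do the $x_1$-sum last, yielding $\sum_{x_1<n}x_1^{3-6\gamma}(n-x_1)^{1-2\gamma}\ll n^{5-8\gamma}=n^{-1/11}$---note this is $-1/11$, not the $-5/11$ you obtain, so the bound is actually sharp. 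Finally, the claim that $\sum_{x_1,x_4>m}x_1^{-\gamma}x_4^{-\gamma}(x_1+x_4+m)^{2-4\gamma}\ll(n+m)^{3-5\gamma}$ is false: $x_4$ is unbounded, the inner $x_4$-sum gives $\asymp(x_1+m)^{3-5\gamma}$, and the resulting $\sum x_1^{3-6\gamma}$ has exponent $-9/11>-1$ and diverges without the cutoff $x_1<n$; with the cutoff it gives $n^{4-6\gamma}$, not $(n+m)^{3-5\gamma}$. The remark that $x_1+x_4\asymp n$ is likewise unjustified since $x_4$ ranges freely.
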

\begin{proof} It is clear that $\E(|T_n(A)|)=0$ if $n<3m$, so it is enough to prove that $\E(|T_n(A)|)\ll n^{-1/11}. $

We observe that if $(x_1,\dots,x_8)\in T_n$ then some of these restrictions happens:
\begin{itemize}
  \item [i)] All $x_i$ are pairwise distinct.
  \item [ii)] $x_7=x_8$ and $x_1,x_2,x_3,x_4,x_5,x_6,x_7$ are pairwise distinct.
  \item [iii)] $x_4\in \{x_2,x_3\}$ and $x_1,x_2,x_3,x_5,x_6,x_7,x_8$ are pairwise distinct.
  \item [iv)] $x_6\in \{x_2,x_3\}$ and $x_1,x_2,x_3,x_4,x_5,x_7,x_8$ are pairwise distinct.
  \item [v)] $x_8\in \{x_2,x_3\}$ and $x_1,x_2,x_3,x_4,x_5,x_6,x_7$ are pairwise distinct.
\end{itemize}
In order to simplify these conditions we observe that iv) and v) are essentially the
same condition and that $x_2$ and $x_3$ play the same role, so iii) can be substituted by
$x_4 = x_2$ and iv) and v) by $x_6 = x_2$. Thus we have
\begin{eqnarray*}
\E(|T_n(A)|)&\ll & \sum_{\substack{x_1,x_2,x_3,x_4,x_5,x_6,x_7,x_8\\x_1+x_2+x_3=n\\x_1+x_4=x_5+x_6=x_7+x_8}}(x_1x_2x_3x_4x_5x_6x_7x_8)^{-\gamma}+\sum_{\substack{x_1,x_2,x_3,x_4,x_5,x_6,x_7
\\x_1+x_2+x_3=n\\x_1+x_4=x_5+x_6=2x_7}}(x_1x_2x_3x_4x_5x_6x_7)^{-\gamma}\\
&+ & \sum_{\substack{x_1,x_2,x_3,x_5,x_6,x_7,x_8\\x_1+x_2+x_3=n\\x_1+x_2=x_5+x_6=x_7+x_8}}(x_1x_2x_3x_5x_6x_7x_8)^{-\gamma}+\sum_{\substack{x_1,x_2,x_3,x_4,x_5,x_7,x_8
\\x_1+x_2+x_3=n\\x_1+x_4=x_5+x_2=x_7+x_8}}(x_1x_2x_3x_4x_5x_7x_8)^{-\gamma}\\
&=&S_1+S_2+S_3+S_4.
\end{eqnarray*}

\begin{eqnarray*}
S_1&\ll &\sum_{x_1,x_4}(x_1x_4)^{-\gamma}\sum_{\substack{x_2,x_3\\ x_2+x_3=n-x_1}}(x_2x_3)^{-\gamma}\sum_{\substack{x_5,x_6\\ x_5+x_6=x_1+x_4}}(x_5x_6)^{-\gamma}
\sum_{\substack{x_7,x_8\\ x_7+x_8=x_1+x_4}}(x_7x_8)^{-\gamma}\\&{\stackrel{*}{\ll}}&  \sum_{x_1,x_4}(x_1x_4)^{-\gamma}(n-x_1)^{1-2\gamma}(x_1+x_4)^{2-4\gamma} {\stackrel{*}{\ll}} \sum_{x_1}x_1^{3-6\gamma}(n-x_1)^{1-2\gamma}{\stackrel{*}{\ll}} n^{5-8\gamma}.\end{eqnarray*}

\begin{eqnarray*}S_2&\ll & \sum_{x_1,x_4}\left (x_1x_4\tfrac{x_1+x_4}2\right )^{-\gamma}\sum_{\substack{x_2,x_3\\x_2+x_3=n-x_1}}(x_2x_3)^{-\gamma}\sum_{\substack{x_5,x_6\\x_5+x_6=x_1+x_4}}(x_5x_6)^{-\gamma}\\
&{\stackrel{*}{\ll}}& \sum_{x_1,x_4}\left (x_1x_4(x_1+x_4)\right )^{-\gamma}(n-x_1)^{1-2\gamma}(x_1+x_4)^{1-2\gamma} \\
&\ll & \sum_{x_1}x_1^{-\gamma}(n-x_1)^{1-2\gamma}\sum_{x_4}x_4^{-\gamma}(x_1+x_4)^{1-3\gamma}
 {\stackrel{*}{\ll}}   \sum_{x_1}x_1^{2-5\gamma}(n-x_1)^{1-2\gamma}{\stackrel{*}{\ll}}   n^{4-7\gamma}.\end{eqnarray*}

\begin{eqnarray*}S_3&\ll & \sum_{x_1,x_2}(x_1x_2(n-x_1-x_2))^{-\gamma}\sum_{\substack{x_5,x_6>m\\x_5+x_6=x_1+x_2}}(x_4x_5)^{-\gamma}\sum_{\substack{x_7,x_8>m\\x_7+x_8=x_1+x_2}}
(x_7x_8)^{-\gamma}\qquad \\&\ll &\sum_{l}\sum_{\substack{x_1,x_2\\x_1+x_2=l}}(x_1x_2)^{-\gamma}(n-l)^{-\gamma} \sum_{\substack{x_5,x_6>m\\x_5+x_6=l}}(x_4x_5)^{-\gamma}\sum_{\substack{x_7,x_8>m\\x_7+x_8=l}}
(x_7x_8)^{-\gamma}\\ &{\stackrel{*}{\ll}}& \sum_{l}(n-l)^{-\gamma}l^{3-6\gamma}{\stackrel{*}{\ll}} n^{4-7\gamma}.\qquad \end{eqnarray*}
The estimate of $S_4$ is more involved.  We observe  that given $x_1,x_3,x_4$ the values of $x_2$ and $x_5$ are determined by
$$x_2=n-x_3-x_1,\qquad x_5=x_4+2x_1+x_3-n.$$
\begin{eqnarray*}S_4&\ll & \sum_{x_1}\sum_{x_3<n-x_1}\sum_{x_4}x_1^{-\gamma}(n-x_3-x_1)^{-\gamma}x_3^{-\gamma}x_4^{-\gamma}(x_4+2x_1+x_3-n)^{-\gamma}\sum_{\substack{x_7,x_8\\x_7+x_8=x_1+x_4}}(x_7x_8)^{-\gamma}
\\
&{\stackrel{*}{\ll}} & \sum_{x_1}\sum_{x_3<n-x_1}x_3^{-\gamma}x_1^{-\gamma}(n-x_3-x_1)^{-\gamma}\sum_{x_4}x_4^{-\gamma}(x_4+2x_1+x_3-n)^{-\gamma}(x_4+x_1)^{1-2\gamma}.\end{eqnarray*}
Now we apply Lemma \ref{abab} to the last sum and later we write $x_3=n-x_1-z,\ z>0$ to get
\begin{eqnarray*}
S_4&\ll & \sum_{x_1}\sum_{x_3<n-x_1}x_3^{-\gamma}x_1^{1-3\gamma}(n-x_3-x_1)^{-\gamma}(2x_1+x_3-n)^{1-2\gamma}\\
&\ll & \sum_{x_1}\sum_{z<n-x_1}(n-x_1-z)^{-\gamma}x_1^{1-3\gamma}z^{-\gamma}(x_1+z)^{1-2\gamma}\\
&\ll & \sum_{x_1}x_1^{2-5\gamma}\sum_{z<n-x_1}(n-x_1-z)^{-\gamma}z^{-\gamma}
{\stackrel{*}{\ll}} \sum_{x_1}x_1^{2-5\gamma}(n-x_1)^{1-2\gamma}{\stackrel{*}{\ll}} n^{4-7\gamma}.\qquad \qquad
\end{eqnarray*}
\end{proof}

\subsection{Expected values in $\mathcal S_m(\frac 23+\frac{\varepsilon}{9+9\varepsilon},{\scriptstyle S\pmod N})$}
\begin{lem}\label{VV2}We have
\begin{enumerate}
\item[i)]$\E(|U_{2r}(A)|)\ll (r+m)^{-1/3},\qquad \quad \text{\rm ii) } \E(|U_{3r}(A)|)\ll (r+m)^{-\varepsilon/6}.$
  \item [iii)]$\E(|V_{2r}(A)|)\ll (r+m)^{-1/3},\qquad \quad \text{\rm iv) } \E(|V_{3r}(A)|)\ll (r+m)^{-\varepsilon/6}.$
\end{enumerate}
\end{lem}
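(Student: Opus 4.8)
The plan is to bound each of the four expectations by a plain sum of products of powers of its coordinates and then feed it into Lemma \ref{ab}. Throughout one may drop the restrictions $x_i\equiv S\pmod N$ and $x_i\ne x_j$, since they only decrease the sums. The only facts about the exponent that are needed are $\gamma>2/3$, $\ 1-2\gamma\le -1/3$, and $\ 2-3\gamma\le -\varepsilon/6$; the last one amounts to $-\frac{\varepsilon}{3+3\varepsilon}\le-\frac{\varepsilon}{6}$, which holds precisely because $\varepsilon<1$. I would record these three inequalities at the outset.

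Parts (i) and (iii) are immediate: $\E(|U_{2r}(A)|)\le\sigma_{\gamma,\gamma}(r;m)$ and $\E(|V_{2r}(A)|)\le\tau_{\gamma,\gamma}(r;m)$, and Lemma \ref{ab}(i),(ii) with $\alpha=\beta=\gamma$ (legitimate since $\gamma<1<2\gamma$) give $\ll(r+m)^{1-2\gamma}\ll(r+m)^{-1/3}$.

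For (ii) and (iv) I would sum over $x_1,x_2$ first. In the case of $U_{3r}$ this produces $\sum_{x_3>m}x_3^{-\gamma}\,\sigma_{\gamma,\gamma}(r-x_3;m)$; Lemma \ref{ab}(i) replaces the inner factor by $(r-x_3+m)^{1-2\gamma}$, and after the substitution $y=r+m-x_3$ the remaining sum is exactly $\sigma_{\gamma,\,2\gamma-1}(r+m;m)$, which Lemma \ref{ab}(i) bounds by $(r+m)^{2-3\gamma}\ll(r+m)^{-\varepsilon/6}$. For $V_{3r}$ the inner constraint reads $x_1+x_2=r+x_3$, so one obtains $\sum_{x_3>m}x_3^{-\gamma}(x_3+r+m)^{1-2\gamma}$, which is $\tau_{2\gamma-1,\,\gamma}(r+m;m)$, again $\ll(r+m)^{2-3\gamma}$ by Lemma \ref{ab}(ii).

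The main point to watch — and the reason $\gamma$ is taken strictly above $2/3$ rather than equal to it — is the second application of Lemma \ref{ab} in (ii) and (iv): there the exponents are $\{\gamma,\,2\gamma-1\}$, and the hypothesis $\alpha+\beta>1$ becomes $3\gamma-1>1$, i.e. $\gamma>2/3$. If $\gamma$ were $2/3$ the outer sum over $x_3$ would fail to decay in $r$. A minor bookkeeping point is that the substitutions must keep both summation variables above $m$ so that the notation $\sigma_{\cdot,\cdot}(\cdot\,;m)$, $\tau_{\cdot,\cdot}(\cdot\,;m)$ applies: for $U_{3r}$ the summand vanishes unless $r-x_3>2m$, which forces $x_3<r$ and hence $y=r+m-x_3>m$; for $V_{3r}$ the variable $x_3+r+m>m$ trivially. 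Everything else is the routine arithmetic of exponents already encapsulated in Lemma \ref{ab}.
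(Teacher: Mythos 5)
Your argument is correct and is essentially the paper's: drop the congruence and distinctness constraints, factor out the two innermost variables as a $\sigma_{\gamma,\gamma}$ or $\tau_{\gamma,\gamma}$ sum, invoke Lemma \ref{ab} to get a factor $(\cdot+m)^{1-2\gamma}$, and then for (ii), (iv) invoke Lemma \ref{ab} a second time (with exponents $\gamma$ and $2\gamma-1$, whose admissibility is exactly the condition $\gamma>2/3$) to collapse the remaining one-variable sum. Your bookkeeping about the ranges of the surviving variable and your explicit identification of the iterated sum with $\sigma_{\gamma,2\gamma-1}(r+m;m)$ and $\tau_{2\gamma-1,\gamma}(r+m;m)$ are slightly more pedantic than the paper's bare $\stackrel{*}{\ll}$ steps, but the content and the route are the same.
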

\begin{proof}
\begin{eqnarray*}
\E(|U_{2r}(A)|)&=& \sum_{\substack{x,y>m\\x+y=r}}(xy)^{-\gamma}\ll (r+m)^{1-2\gamma}\ll (r+m)^{-1/3}.\qquad \qquad \qquad \qquad \qquad \\
\E(|V_{2r}(A)|)&=&  \sum_{\substack{x,y>m\\x-y=m}}(xy)^{-\gamma}\ll (r+m)^{1-2\gamma}\ll (r+m)^{-1/3}.\qquad \qquad \qquad \qquad \qquad \end{eqnarray*}\begin{eqnarray*}
\E(|U_{3r}(A)|)&\le & \sum_{\substack{x,y,z>m\\x+y+z=r}}(xyz)^{-\gamma}\le \sum_{z>0}z^{-\gamma}\sum_{\substack{x,y>m\\x+y=r-z}}(xy)^{-\gamma}\\ &{\stackrel{*}{\ll}} & \sum_{z}z^{-\gamma}(r-z+m)^{1-2\gamma}{\stackrel{*}{\ll}} (r+m)^{2-3\gamma}\ll (r+m)^{-\varepsilon/6}.\qquad \qquad \qquad \\
\E(|V_{3r}(A)|)&\le & \sum_{\substack{x,y,z>m\\x+y-z=r}}(xyz)^{-\gamma}=\sum_{z}z^{-\gamma}\sum_{\substack{x,y>m\\x+y=r+z}}(xy)^{-\gamma}\\ &{\stackrel{*}{\ll}} & \sum_{z}z^{-\gamma}(r+z+m)^{1-2\gamma}{\stackrel{*}{\ll}} (r+m)^{2-3\gamma}\ll (r+m)^{-\varepsilon/6}.\qquad \qquad \qquad  \end{eqnarray*}
\end{proof}
\begin{lem}\label{Rn}
$\E(R_{n}(A))\gg n^{\frac{2\varepsilon^2}{9+9\varepsilon}}.$
\end{lem}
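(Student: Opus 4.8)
The plan is to imitate the proof of Lemma~\ref{Qn}, replacing Theorem~\ref{modular} by Corollary~\ref{modular3}. Since $\E(|R_n(A)|)=0$ for $n\le 4m$ it is enough to give the lower bound for $n$ large. By Corollary~\ref{modular3}, the set $S\subset\Z_N$ we have fixed satisfies $n\equiv s_1+s_2+s_3+s_4\pmod N$ for some \emph{pairwise distinct} residues $s_1,s_2,s_3,s_4\in S$; fix one such representation. I would then bound $\E(|R_n(A)|)=\sum_{\{x_1,x_2,x_3,x_4\}\in R_n}\Po(x_1,x_2,x_3,x_4\in A)$ from below by restricting the sum to $4$-tuples with $x_i\equiv s_i\pmod N$ and $x_i>m$. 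Writing $x_i=s_i+Ny_i$ and $l=(n-s_1-s_2-s_3-s_4)/N$, the equation $x_1+x_2+x_3+x_4=n$ becomes $y_1+y_2+y_3+y_4=l$, and all the conditions $x_i\not\equiv x_j\pmod N$ (hence also $x_i\ne x_j$ as integers) hold automatically because the $s_i$ are pairwise distinct.

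Next I would restrict further to the subfamily in which $x_4$ is the small coordinate, say $\tfrac12 n^{\varepsilon}<x_4\le n^{\varepsilon}$, and $x_1,x_2$ lie in $(M/4,M/3]$ with $M=n-x_4\asymp n$, so that $x_3=M-x_1-x_2\in[M/3,M/2)$ is of order $n$ as well, and all four coordinates exceed $m$ once $n$ is large. On this subfamily $\Po(x_1,x_2,x_3,x_4\in A)=(x_1x_2x_3x_4)^{-\gamma}\gg n^{-3\gamma}(n^{\varepsilon})^{-\gamma}=n^{-(3+\varepsilon)\gamma}$. For the count: there are $\asymp n^{\varepsilon}$ admissible values of $x_4$ in the progression $s_4+N\Z$, and for each of them $\asymp n^{2}$ admissible pairs $(x_1,x_2)$ in the prescribed ranges and progressions $s_1+N\Z$, $s_2+N\Z$; the value $x_3=n-x_1-x_2-x_4$ is then determined and automatically lies in $s_3+N\Z$ (because $n\equiv s_1+s_2+s_3+s_4\pmod N$) and, for a positive proportion of the pairs, in the range $[M/3,M/2)$. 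Hence the number of admissible tuples is $\gg n^{2+\varepsilon}$ and
\[
\E(|R_n(A)|)\gg n^{2+\varepsilon}\cdot n^{-(3+\varepsilon)\gamma}=n^{\,2+\varepsilon-(3+\varepsilon)\gamma}.
\]

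It remains to check the exponent. With $\gamma=\tfrac23+\tfrac{\varepsilon}{9+9\varepsilon}$ one has $(3+\varepsilon)\gamma=2+\tfrac{2\varepsilon}{3}+\tfrac{\varepsilon(3+\varepsilon)}{9(1+\varepsilon)}$, so
\[
2+\varepsilon-(3+\varepsilon)\gamma=\frac{\varepsilon}{3}-\frac{\varepsilon(3+\varepsilon)}{9(1+\varepsilon)}=\frac{\varepsilon\bigl(3(1+\varepsilon)-(3+\varepsilon)\bigr)}{9(1+\varepsilon)}=\frac{2\varepsilon^{2}}{9+9\varepsilon},
\]
which is exactly the exponent claimed. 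There is no genuine obstacle here; the only mildly delicate point is the book-keeping needed to show that the size restrictions ($x_i>m$, $x_4\le n^{\varepsilon}$, $x_1,x_2,x_3\asymp n$) and the four congruence conditions modulo $N$ can be imposed simultaneously on a family of $\gg n^{2+\varepsilon}$ tuples. Both the observation that the constraint $x_i>m$ becomes vacuous once $n$ is large compared with $m$, and the pairwise distinctness of the $s_i$ (which forces $x_3$ into the correct residue class for free), are used precisely at this step.
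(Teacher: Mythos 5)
Your proof is correct and follows essentially the same route as the paper: use Corollary~\ref{modular3} to fix pairwise-distinct residues $s_1,\dots,s_4$ with $n\equiv s_1+s_2+s_3+s_4\pmod N$, restrict to tuples with $x_i\equiv s_i\pmod N$ and $x_i>m$, bound each probability below by $n^{-(3+\varepsilon)\gamma}$, count $\gg n^{2+\varepsilon}$ admissible tuples, and compute the exponent. The only cosmetic difference is that you spell out explicit dyadic ranges for $x_1,x_2,x_3,x_4$ while the paper simply counts $\{y_1,y_2,y_3,y_4\}$ with $y_1+\cdots+y_4=l$ and $m<\min y_i\le n^{\varepsilon}/N$ and asserts the count is $\asymp n^{\varepsilon}l^2$; both yield the same bound. (Minor note: with $x_1,x_2\in(M/4,M/3]$ one in fact always has $x_3=M-x_1-x_2\in[M/3,M/2)$, so the ``positive proportion'' hedge is unnecessary.)
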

\begin{proof}
We have $\E(|R_{n}(A)|)=\sum_{\{x_1,x_2,x_3,x_4\}\in R_n}\Po(x_1,x_2,x_3,x_4\in A)\ge n^{-(3+\varepsilon)\gamma}|R'_n|,$ where
$$R_n'=\{\{x_1,x_2,x_3,x_4\}\in R_n:\ x_i\equiv S\pmod N,\ x_i>m\}.$$
We observe that $S\subset \Z_{N}$ is such that $n\equiv s_1+s_2+s_3+s_4\pmod{N}$ for some pairwise distinct $s_1,s_2,s_3,s_4$. We fix $s_1,s_2,s_3,s_4$ and write $x_i=s_i+256y_i$ and $l=\frac{n-s_1-s_2-s_3-s_4}{N}$.
Then $|R'_n|\ge |R_n^*| $ where
 \begin{eqnarray*}|R^*_n|&=&\#\left \{\{y_1,y_2,y_3,y_4\}:\ y_1+y_2+y_3+y_4=l,\ m<\min(y_1,y_2,y_3,y_4)\le n^{\varepsilon}/256 \right \}\\ &\asymp & n^{\varepsilon}l^2\asymp n^{2+\varepsilon} \end{eqnarray*}
Thus, $\E(|R_n(A)|)\ge n^{-(3+\varepsilon)\gamma}|R_n^*|\gg n^{-(3+\varepsilon)\gamma+2+\varepsilon}\gg n^{\frac{2\varepsilon^2}{9+9\varepsilon}}$.
\end{proof}
\begin{prop}\label{Deltan}
$\Delta(R_n)\ll n^{\frac{-3\varepsilon+2\varepsilon^2}{9+9\varepsilon}}. $
\end{prop}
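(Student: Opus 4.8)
The plan is to classify the pairs $\omega\sim\omega'$ in $R_n$ by $|\omega\cap\omega'|$ and by which block of coordinates contains a ``small'' element (one that is $\le n^{\varepsilon}$), and then to estimate each of the four resulting multiple sums with Lemma \ref{ab}. Two structural remarks keep this routine. First, every $\omega\in R_n$ consists of four integers in four \emph{distinct} residue classes mod $N$, hence of four distinct integers; and if $\omega\neq\omega'$ shared three elements the fourth would be forced by $x_1+x_2+x_3+x_4=n$, so $|\omega\cap\omega'|\in\{1,2\}$ and $\omega\cup\omega'$ is a set of $8-|\omega\cap\omega'|$ distinct integers. Hence $\Po(\omega\cup\omega'\subset A)$ equals the product of $x^{-\gamma}$ over those integers (and is $0$ unless they all lie in $S\bmod N$ and exceed $m$), so for an upper bound we may use $\Po(x\in A)\le x^{-\gamma}$ and drop all the distinctness and congruence constraints, enlarging the ranges of summation. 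Second, recall $\gamma=\tfrac23+\tfrac{\varepsilon}{9+9\varepsilon}$, so $\tfrac23<\gamma<\tfrac34$; in particular $\sigma_{\gamma,\gamma}$, $\sigma_{\gamma,2\gamma-1}$, $\sigma_{\gamma,4\gamma-2}$, $\sigma_{\gamma,5\gamma-3}$ all satisfy the hypotheses of Lemma \ref{ab}, and two applications of it give $\sum_{y_1+y_2+y_3=s}(y_1y_2y_3)^{-\gamma}\stackrel{*}{\ll}s^{2-3\gamma}$.

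The main term comes from pairs with $|\omega\cap\omega'|=1$ whose common element $x_1$ satisfies $x_1\le n^{\varepsilon}$: writing $\omega=\{x_1,x_2,x_3,x_4\}$, $\omega'=\{x_1,x_5,x_6,x_7\}$ with $x_2+x_3+x_4=x_5+x_6+x_7=n-x_1$, its contribution is
\[
\ll\sum_{x_1\le n^{\varepsilon}}x_1^{-\gamma}\Big(\sum_{y_1+y_2+y_3=n-x_1}(y_1y_2y_3)^{-\gamma}\Big)^{2}\stackrel{*}{\ll}\sum_{x_1\le n^{\varepsilon}}x_1^{-\gamma}(n-x_1)^{4-6\gamma}\ll n^{4-6\gamma}n^{\varepsilon(1-\gamma)},
\]
and a short computation gives $4-6\gamma+\varepsilon(1-\gamma)=\tfrac{-3\varepsilon+2\varepsilon^{2}}{9+9\varepsilon}$, exactly the target exponent; this is the case that pins down the value of $\gamma$. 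When instead $x_1>n^{\varepsilon}$, a small element lies in each of the two triples: bounding one triple using that one of its coordinates is $\le n^{\varepsilon}$ (which gives $\ll(n-x_1)^{1-2\gamma}n^{\varepsilon(1-\gamma)}$) and the other crudely by $(n-x_1)^{2-3\gamma}$ gives $\ll n^{\varepsilon(1-\gamma)}\sum_{x_1}x_1^{-\gamma}(n-x_1)^{3-5\gamma}\stackrel{*}{\ll}n^{4-6\gamma}n^{\varepsilon(1-\gamma)}$, again the target.

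The two cases with $|\omega\cap\omega'|=2$ give strictly smaller exponents. If the shared pair $\{x_1,x_2\}$ contains an element $\le n^{\varepsilon}$, say $x_1$, then with $\omega=\{x_1,x_2,x_3,x_4\}$, $\omega'=\{x_1,x_2,x_5,x_6\}$ and $x_3+x_4=x_5+x_6=n-x_1-x_2$ the contribution is
\[
\ll\sum_{x_1\le n^{\varepsilon}}x_1^{-\gamma}\sum_{x_2}x_2^{-\gamma}\,\sigma_{\gamma,\gamma}(n-x_1-x_2)^{2}\stackrel{*}{\ll}\sum_{x_1\le n^{\varepsilon}}x_1^{-\gamma}(n-x_1)^{3-5\gamma}\ll n^{3-5\gamma}n^{\varepsilon(1-\gamma)}.
\]
If the shared pair contains no small element, a small element lies in each of the two remaining pairs; bounding one pair-sum using that one of its coordinates is $\le n^{\varepsilon}$ (which gives $\ll(n-x_1-x_2)^{-\gamma}n^{\varepsilon(1-\gamma)}$) and the other by $\sigma_{\gamma,\gamma}(n-x_1-x_2)\ll(n-x_1-x_2)^{1-2\gamma}$ gives $\ll n^{\varepsilon(1-\gamma)}\sum_{x_1,x_2}(x_1x_2)^{-\gamma}(n-x_1-x_2)^{1-3\gamma}\ll n^{\varepsilon(1-\gamma)}n^{1-2\gamma}$, where the last step uses the elementary bound $\sum_{1\le x<k}x^{-\gamma}(k-x)^{1-3\gamma}\ll k^{-\gamma}$ (valid because $3\gamma-1>1$) followed by Lemma \ref{ab}. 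Since $0<\varepsilon<1$ and $\gamma>2/3$, one checks that $3-5\gamma+\varepsilon(1-\gamma)$ and $1-2\gamma+\varepsilon(1-\gamma)$ are both strictly below $\tfrac{-3\varepsilon+2\varepsilon^{2}}{9+9\varepsilon}$, so these two cases are absorbed by the main term.

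Summing the four bounds yields $\Delta(R_n)\ll n^{(-3\varepsilon+2\varepsilon^{2})/(9+9\varepsilon)}$. The only points requiring care are the exhaustive and non-overlapping enumeration of the cases --- which the mod-$N$ distinctness makes straightforward --- and checking that the inner estimates hold \emph{uniformly} in the summation variables, in particular when $n-x_1$ or $n-x_1-x_2$ happens to be as small as $O(n^{\varepsilon})$ (in which case all the elements involved are themselves $O(n^{\varepsilon})$ and the same $\sigma$-bounds apply, with $n$ replaced by that smaller quantity). There is no analytic difficulty beyond repeated invocation of Lemma \ref{ab}.
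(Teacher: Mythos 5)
Your proof is correct and follows essentially the same strategy as the paper: classify pairs $\omega\sim\omega'$ by $|\omega\cap\omega'|\in\{1,2\}$ and by whether the common part contains the small element, reduce each case to a weighted convolution sum, and estimate by repeated application of Lemma~\ref{ab}, with the case ``shared element is the small one'' producing the dominant exponent $4-6\gamma+\varepsilon(1-\gamma)$. The one deviation is that in the two cases where the shared part has no small element you argue somewhat more crudely than the paper (which splits the sum over $x_2$ into the ranges $x_2\le n-2n^{\varepsilon}$ and $n-2n^{\varepsilon}<x_2<n$ to obtain sharper exponents), but your cruder bounds $n^{4-6\gamma+\varepsilon(1-\gamma)}$ and $n^{1-2\gamma+\varepsilon(1-\gamma)}$ are still at or below the main term, so the argument goes through unchanged.
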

\begin{proof} We have that $$\Delta(R_n)=\sum_{\substack{\omega,\omega'\in R_n\\ \omega\sim\omega'}}\Po(\omega,\omega'\in A)$$
We split $\Delta(R_n) $ in several sums according to the common elements of $\omega$ and $\omega'$. We suppose that $i=1$ is the index for which $x_1\le n^{\varepsilon}$.

  1. $\omega\cap \omega'=x_1$ \begin{eqnarray*}\sum_{\substack{x_1,x_2,x_3,x_4,x_2',x'_3,x'_4\\ x_1+x_2+x_3+x_4=n\\x_1+x'_2+x'_3+x'_4=n\\x_1\le n^{\varepsilon}}}(x_1x_2x_3x_4x_2'x_3'x_4')^{-\gamma}&\ll& \sum_{x_1\le n^{\varepsilon}}x_1^{-\gamma}\Big (\sum_{\substack{x_2,x_3,x_4\\ x_2+x_3+x_4=n-x_1}}(x_2x_3x_4)^{-\gamma}\Big )^2\\ & {\stackrel{*}{\ll}} & \sum_{x_1\le n^{\varepsilon}}x_1^{-\gamma} (n-x_1)^{4-6\gamma}\ll n^{4-6\gamma+\varepsilon(1-\gamma)}. \end{eqnarray*}
  2. $\omega\cap \omega'=x_j$ for some $j=2,3,4$. Without lost of generality we consider the case $x_2=x_2'$: \begin{eqnarray*} \sum_{\substack{x_1,x_2,x_3,x_4,x_1',x'_3,x'_4\\x_1,x_1'\le n^{\varepsilon}\\x_3+x_4=n-x_2-x_1\\x_3'+x_4'=n-x_2-x_1'\\}}(x_1x_2x_3x_4x_1'x_3'x_4')^{-\gamma}&\ll & \sum_{x_2}x_2^{-\gamma}\Big (\sum_{x_1\le n^{\varepsilon}}x_1^{-\gamma}\sum_{\substack{x,y\\x+y=n-x_2-x_1}}(xy)^{-\gamma}\Big )^2\end{eqnarray*}
  Now we split the sum in two sums according to $x_2\le n-2n^{\varepsilon}$ or $n-2n^{\varepsilon}<x_2< n$.
\begin{eqnarray*}
     \sum_{x_2\le n-2n^{\varepsilon}}x_2^{-\gamma}\left (\sum_{x_1\le n^{\varepsilon}}x_1^{-\gamma}(n-x_2-x_1)^{1-2\gamma}\right )^2&\ll &\sum_{x_2\le n}x_2^{-\gamma}\left (\frac{n-x_2}2\right )^{2-4\gamma}\left (\sum_{x_1\le n^{\varepsilon}}x_1^{-\gamma}\right )^2\\
     &{\stackrel{*}{\ll}} &n^{3-5\gamma}n^{2(1-\gamma)\varepsilon}\ll n^{(2-2\gamma)\varepsilon+3-5\gamma}.
       \end{eqnarray*}
       \begin{eqnarray*}
       \sum_{n-2n^{\varepsilon}<x_2\le n}x_2^{-\gamma}\left (\sum_{x_1\le n^{\varepsilon}}x_1^{-\gamma}(n-x_2-x_1)^{1-2\gamma}\right )^2&{\stackrel{*}{\ll}} &\sum_{n-2n^{\varepsilon}<x_2\le n}x_2^{-\gamma}(n-x_2)^{4-6\gamma}\\ &\ll &n^{-\gamma}\sum_{n-2n^{\varepsilon}<x_2\le n}(n-x_2)^{4-6\gamma}\ll n^{-\gamma+(5-6\gamma)\varepsilon}.
       \end{eqnarray*}
  3. $\omega\cap \omega'=\{x_1,x_j\}$ for some $j=2,3,4$. Without lost of generality we consider the case $x_1=x_1'$ and $x_2=x_2'$:
 \begin{eqnarray*}\sum_{\substack{x_1,x_2,x_3,x_4,x'_3,x'_4\\ x_1+x_2+x_3+x_4=n\\x_1+x_2+x_3'+x_4'=n\\ x_1\le n^{\varepsilon}}}(x_1x_2x_3x_4x_3'x_4')^{-\gamma}&\ll &\sum_{x_1\le n^{\varepsilon}}x_1^{-\gamma}\sum_{x_2}x_2^{-\gamma}
 \Big (\sum_{\substack{x_3,x_4\\x_3+x_4=n-x_1-x_2}}(x_3x_4)^{-\gamma}  \Big )^2\\
 &{\stackrel{*}{\ll}}&  \sum_{x_1\le n^{\varepsilon}}x_1^{-\gamma}\sum_{x_2}x_2^{-\gamma}(n-x_1-x_2)^{2-4\gamma}  \\&{\stackrel{*}{\ll}} & \sum_{x_1\le n^{\varepsilon}}x_1^{-\gamma}(n-x_1)^{3-5\gamma}\ll n^{3-5\gamma+\varepsilon(1-\gamma)}.
 \end{eqnarray*}
  4. $\omega\cap\omega'=\{x_j,x_k\}$ for some $2\le j<k\le 4$. Without lost of generality we consider the case $x_2=x_2'$ and $x_3=x_3'$:
  \begin{eqnarray*}& &\sum_{\substack{x_1,x_2,x_3,x_4,x'_1,x'_4\\ x_1+x_2+x_3+x_4=n\\x_1'+x_2+x_3+x_4'=n\\ x_1,x_1'\le n^{\varepsilon}}}(x_1x_2x_3x_4x_1'x_4')^{-\gamma}\ll \sum_{\substack{x_2,x_3\\x_2+x_3<n}}(x_2x_3)^{-\gamma}\left (\sum_{x_1\le n^{\varepsilon}}x_1^{-\gamma}(n-x_2-x_3-x_1)^{-\gamma}\right )^2\\
  &{\stackrel{*}{\ll}} & \sum_{\substack{x_2,x_3\\x_2+x_3<n-2n^{\varepsilon}}}(x_2x_3)^{-\gamma}\left (n^{-\varepsilon \gamma}\sum_{x_1\le n^{\varepsilon}}x_1^{-\gamma} \right )^2+\sum_{\substack{x_2,x_3\\n-2n^{\varepsilon}\le x_2+x_3<n}}(x_2x_3)^{-\gamma}\left ((n-x_2-x_3)^{1-2\gamma}\right )^2\\ &\ll & \sum_{\substack{x_2,x_3\\x_2+x_3<n-2n^{\varepsilon}}}(x_2x_3)^{-\gamma}\left (n^{-\varepsilon \gamma}n^{\varepsilon(1-\gamma)}\right )^2  +\sum_{n-2n^{\varepsilon}\le l<n }  \sum_{\substack{x_2,x_3\\ x_2+x_3=l}}(x_2x_3)^{-\gamma}(n-l)^{2-4\gamma}\\
  &{\stackrel{*}{\ll}} &\sum_{\substack{x_2,x_3\\x_2+x_3<n-2n^{\varepsilon}}}(x_2x_3)^{-\gamma}n^{\varepsilon(2-4\gamma)}  +n^{1-2\gamma}\sum_{n-2n^{\varepsilon}\le l<n }(n-l)^{2-4\gamma}
  {\stackrel{*}{\ll}} n^{1-2\gamma+\varepsilon(2-4\gamma)}+ n^{1-2\gamma+\varepsilon(3-4\gamma)}.
  \end{eqnarray*}

Observe that if $\omega \ne \omega'$ it is not possible that they have  three common coordinates.
Putting $\gamma=\frac 23+\frac{\varepsilon}{9+9\varepsilon}$ in each estimate we have that \begin{eqnarray*}\Delta(R_n)&\ll & n^{4-6\gamma+\varepsilon(1-\gamma)}+n^{3-5\gamma+ \varepsilon(1-\gamma)} + n^{-\gamma+\varepsilon(5-6\gamma)}+n^{1-2\gamma+\varepsilon(2-4\gamma)}+ n^{1-2\gamma+\varepsilon(3-4\gamma)}\\ &\ll & n^{\frac{-3\varepsilon+2\varepsilon^2}{9+9\varepsilon}}+ n^{\frac{-3-5\varepsilon+2\varepsilon^2}{9+9\varepsilon}}+n^{\frac{-6+2\varepsilon+3\varepsilon^2}{9+9\varepsilon}}
+n^{\frac{-3-11\varepsilon-10\varepsilon^2}{9+9\varepsilon}}+ n^{\frac{-3-2\varepsilon-\varepsilon^2}{9+9\varepsilon}} \ll n^{\frac{-3\varepsilon+2\varepsilon^2}{9+9\varepsilon}}.\end{eqnarray*}
\end{proof}

\begin{lem}\label{B1n}
$\E(B_{n}(A))\ll (n+m)^{-\frac{\varepsilon^2}{18}}.$
\end{lem}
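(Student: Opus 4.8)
The plan is to follow the scheme of Lemma~\ref{Tn}. I would write $\E(|B_n(A)|)=\sum_{\overline x\in B_n}\Po(\text{Set}(\overline x)\subset A)$; this vanishes unless $n>4m$ (all coordinates must exceed $m$), so it is enough to prove the bound with $n^{-\varepsilon^2/18}$ in place of $(n+m)^{-\varepsilon^2/18}$. I would then split the sum according to the coincidence pattern of the seven coordinates $x_1,\dots,x_7$, and inside each pattern according to which of $x_1,x_2,x_3,x_4$ is the coordinate $\le n^{\varepsilon}$. When all seven coordinates are distinct, $\Po(\text{Set}(\overline x)\subset A)=\prod_{i=1}^{7}x_i^{-\gamma}$ (and is $0$ if some $x_i\le m$); a degenerate pattern gives a sum in fewer variables with weaker linear constraints.

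The first key point is that the modular conditions in $\text{cond}(B_n)$ cut the list of patterns down to a handful, exactly as in the proof of Theorem~\ref{Main2}. Since $x_1,x_2,x_3,x_4$ are pairwise incongruent mod $N$, they are pairwise distinct; since $x_6\equiv x_1\pmod N$ we get $x_6\notin\{x_2,x_3,x_4\}$, and $x_6=x_1$ is impossible because it forces $x_5=x_7$, hence $\{x_1,x_5\}=\{x_6,x_7\}$. One checks similarly that $x_5\ne x_7$, $x_5\ne x_6$ and $x_7\ne x_1$ (each would again force $\{x_1,x_5\}=\{x_6,x_7\}$), and that no two of the surviving coincidences can occur at once. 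Hence the only patterns to treat are: all coordinates distinct; $x_1=x_5$; $x_6=x_7$; $x_5\in\{x_2,x_3,x_4\}$; and $x_7\in\{x_2,x_3,x_4\}$. Using the symmetry among $x_2,x_3,x_4$ one may take the last two coincidences to involve $x_2$, and the small coordinate to be $x_1$ or $x_2$, leaving a short explicit list of sums.

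Each of these sums I would bound by repeated application of Lemma~\ref{ab}, using Lemma~\ref{abab} in the one delicate sub-case where a coincidence such as $x_7=x_2$ couples the two defining equations (the analogue of $S_4$ in Lemma~\ref{Tn}). The basic step is to sum out the pairing variables: for fixed $x_1$,
\[
\sum_{\substack{x_5,x_6,x_7\\ x_6+x_7=x_1+x_5}}(x_5x_6x_7)^{-\gamma}=\sum_{x_5}x_5^{-\gamma}\,\sigma_{\gamma,\gamma}(x_1+x_5)\ \ll\ \tau_{\gamma,\,2\gamma-1}(x_1)\ \ll\ x_1^{\,2-3\gamma},
\]
the last inequality using Lemma~\ref{ab} together with $\gamma>2/3$ (so that the pair $\gamma,\,2\gamma-1$ meets its hypothesis); the degenerate sub-cases give the same or a smaller power of $x_1$. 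One is then left to sum $x_1^{\,2-3\gamma}(x_1x_2x_3x_4)^{-\gamma}$ over $x_1+x_2+x_3+x_4=n$ with one variable $\le n^{\varepsilon}$, which I would again handle with Lemma~\ref{ab} for the large variables together with $\sum_{x\le n^{\varepsilon}}x^{-\gamma}\ll n^{\varepsilon(1-\gamma)}$ and $\sum_{x\le n^{\varepsilon}}x^{\,2-4\gamma}\ll n^{\varepsilon(3-4\gamma)}$ (valid since $2/3<\gamma<3/4$). For the representative case of all seven coordinates distinct with $x_1\le n^{\varepsilon}$, this gives
\[
\E(|B_n(A)|)\ \ll\ n^{\,2-3\gamma}\sum_{x_1\le n^{\varepsilon}}x_1^{\,2-4\gamma}\ \ll\ n^{\,2-3\gamma+\varepsilon(3-4\gamma)},
\]
and substituting $\gamma=\tfrac23+\tfrac{\varepsilon}{9+9\varepsilon}$ turns the exponent into $-\tfrac{\varepsilon^2}{9+9\varepsilon}\le-\tfrac{\varepsilon^2}{18}$ (using $\varepsilon<1$). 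Every other case yields a strictly smaller exponent — for instance ``all distinct, $x_2\le n^{\varepsilon}$'' contributes $n^{\,4-6\gamma+\varepsilon(1-\gamma)}$ and the coincidence cases contribute at most $n^{\,2-3\gamma}$ — so taking the maximum over the finite list gives $\E(|B_n(A)|)\ll n^{-\varepsilon^2/18}$.

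I expect the bookkeeping above to be the main obstacle: checking that the modular argument really exhausts the coincidence patterns, that the condition $\min(x_1,\dots,x_4)\le n^{\varepsilon}$ is exploited on the right variable in each pattern (it is essentially tight when the small variable is $x_1$ and leaves slack otherwise), and that the exponent coming from the single worst sub-case still beats $-\varepsilon^2/18$. It is this last requirement that determines the admissible range of $\gamma$, hence the choice $\gamma=\tfrac23+\tfrac{\varepsilon}{9+9\varepsilon}$.
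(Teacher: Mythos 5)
Your proposal is correct and follows essentially the same scheme as the paper's proof: reduce to $n>4m$, enumerate the coincidence patterns among $x_1,\dots,x_7$ that the modular conditions permit, bound each resulting sum by iterated applications of Lemma~\ref{ab} (with Lemma~\ref{abab} in the sub-case where a coincidence couples the two defining equations), and finally substitute $\gamma=\tfrac23+\tfrac{\varepsilon}{9+9\varepsilon}$ into the dominating exponent. One minor but real difference: the paper's list (i)--(v) includes the vacuous pattern $x_6\in\{x_2,x_3,x_4\}$ (ruled out since $x_6\equiv x_1\pmod N$) and omits the genuine pattern $x_1=x_5$, whereas your modular case-check produces exactly the right list; the extra $x_1=x_5$ term contributes only $\ll n^{2-3\gamma}$ and is dominated by the main term $n^{2-3\gamma+\varepsilon(3-4\gamma)}$, so the final bound $n^{-\varepsilon^2/(9+9\varepsilon)}\le n^{-\varepsilon^2/18}$ agrees with the paper.
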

\begin{proof} It is clear that $\E(|B_n(A)|)=0$ if $n<4m$, so it is enough to prove that $\E(B_{n}(A))\ll n^{-\frac{\varepsilon^2}{9+9\varepsilon}}.$

We observe that if $(x_1,\dots, x_7)\in B_{n}(A)$   then some of these restrictions holds:
\begin{itemize}
  \item [i)]All $x_i$ are pairwise distinct.
  \item [ii)]$x_6=x_7$ and all $x_1,x_2,x_3,x_4,x_5,x_6$ are pairwise distinct.
  \item [iii)]$x_5\in \{x_2,x_3,x_4\}$ and all $x_1,x_2,x_3,x_4,x_6,x_7$ are pairwise distinct.
  \item [iv)]$x_6\in \{x_2,x_3,x_4\}$ and all $x_1,x_2,x_3,x_4,x_5,x_7$ are pairwise distinct.
\item [v)] $x_7\in \{x_2,x_3,x_4,\}$ and all $x_1,x_2,x_3,x_4,x_5,x_6$ are pairwise distinct.
\end{itemize}Thus we have
$$ \E(|B_{n}(A)|)\le  \sum'_{\substack{(x_1,x_2,x_3,x_4,x_5,x_6, x_7)\\ x_1+x_2+x_3 +x_4=n\\ x_1+x_5=x_6+x_7\\\min(x_1,x_2,x_3,x_4)\le n^{\varepsilon}}}\Po(x_1,\dots,x_7\in A)$$ and $\sum'$ means that $\overline x$ satisfies  i), ii), iii), iv) or v).

In order to simplify these conditions we observe that iv) and v) are essentially the same condition and that $x_2,x_3,x_4$ play the same role, so iii) can be substituted by $x_5=x_2$ and iv) and v) by $x_7=x_2$.  Thus we have that
\begin{eqnarray*}
\E(B_{n}(A))&\ll &\sum_{\substack{x_1,x_2,x_3,x_4,x_5,x_6, x_7\\ x_1+x_2+x_3 +x_4=n\\ x_1+x_5=x_6+x_7\\\min(x_1,x_2,x_3,x_4)\le n^{\varepsilon}}}(x_1x_2x_3x_4x_5x_6x_7)^{-\gamma}+
\sum_{\substack{x_1,x_2,x_3,x_4,x_5,x_6\\ x_1+x_2+x_3 +x_4=n\\ x_1+x_5=2x_6\\\min(x_1,x_2,x_3,x_4)\le n^{\varepsilon}}}(x_1x_2x_3x_4x_5x_6)^{-\gamma}\\
&+&
\sum_{\substack{x_1,x_2,x_3,x_4,x_5,x_6\\ x_1+x_2+x_3 +x_4=n\\ x_1+x_2=x_6+x_7\\\min(x_1,x_2,x_3,x_4)\le n^{\varepsilon}}}(x_1x_2x_3x_4x_6x_7)^{-\gamma}+
\sum_{\substack{x_1,x_2,x_3,x_4,x_5,x_6\\ x_1+x_2+x_3 +x_4=n\\ x_1+x_5=x_6+x_2\\\min(x_1,x_2,x_3,x_4)\le n^{\varepsilon}}}(x_1x_2x_3x_4x_5x_6)^{-\gamma}\\ &=& S_1+S_2+S_3+S_4.
\end{eqnarray*}
We write $S_1\le S_1'+S_1''$  to distinguish the cases $x_1\le n^{\varepsilon}$ and $x_2\le n^{\varepsilon}$.
\begin{eqnarray*}S_1'&\ll &\sum_{x_1\le n^{\varepsilon}}\sum_{x_2<n}\sum_{x_5}(x_1x_2x_5)^{-\gamma} \sum_{\substack{x_3,x_4\\x_3+x_4=n-x_1-x_2}}(x_3x_4)^{-\gamma}\sum_{\substack{x_6,x_7\\x_6+x_7=x_1+x_5}}(x_6x_7)^{-\gamma}\\
&{\stackrel{*}{\ll}} & \sum_{x_1\le n^{\varepsilon}}\sum_{x_2<n}\sum_{x_5}(x_1x_2x_5)^{-\gamma} (n-x_1-x_2)^{1-2\gamma}(x_1+x_5)^{1-2\gamma}\\
&{\stackrel{*}{\ll}} & \sum_{x_1\le n^{\varepsilon}}\sum_{x_2<n}x_1^{2-4\gamma}x_2^{-\gamma} (n-x_1-x_2)^{1-2\gamma}\ll  \sum_{x_1\le n^{\varepsilon}}x_1^{2-4\gamma} (n-x_1)^{2-3\gamma}\\&{\stackrel{*}{\ll}}& n^{2-3\gamma+\varepsilon(3-4\gamma)}\ll n^{-\frac{\varepsilon}{3+3\varepsilon}+\varepsilon(\frac 13-\frac{4\varepsilon}{9+9\varepsilon})}\ll n^{-\frac{\varepsilon^2}{9+9\varepsilon}}\ll n^{-\varepsilon^2/18}.
 \end{eqnarray*}
 \begin{eqnarray*}S_1''&\ll &\sum_{x_2\le n^{\varepsilon}}\sum_{x_1<n}\sum_{x_5}(x_1x_2x_5)^{-\gamma} \sum_{\substack{x_3,x_4\\x_3+x_4=n-x_1-x_2}}(x_3x_4)^{-\gamma}\sum_{\substack{x_6,x_7\\x_6+x_7=x_1+x_5}}(x_6x_7)^{-\gamma}\\
&{\stackrel{*}{\ll}} & \sum_{x_2\le n^{\varepsilon}}\sum_{x_1<n}\sum_{x_5}(x_1x_2x_5)^{-\gamma} (n-x_1-x_2)^{1-2\gamma}(x_1+x_5)^{1-2\gamma}\\
&{\stackrel{*}{\ll}} & \sum_{x_2\le n^{\varepsilon}}\sum_{x_1<n}x_1^{2-4\gamma}x_2^{-\gamma} (n-x_1-x_2)^{1-2\gamma}{\stackrel{*}{\ll}} \sum_{x_2\le n^{\varepsilon}}x_2^{-\gamma} (n-x_2)^{4-6\gamma}\\&{\stackrel{*}{\ll}}& n^{4-6\gamma+\varepsilon(1-\gamma)}\ll n^{-\frac{2\varepsilon}{3+3\varepsilon}+\varepsilon(\frac 13-\frac{\varepsilon}{9+9\varepsilon}  )}\ll n^{\frac{-3\varepsilon+2\varepsilon^2}{9+9\varepsilon}}\ll n^{-\varepsilon^2/18}.
 \end{eqnarray*}
In the estimates of $S_2,S_3$ and $S_4$ we remove the annoying condition $\min(x_1,x_2,x_3,x_4)\le n^{\varepsilon}$.
\begin{eqnarray*}
S_2&\le &\sum_{\substack{x_1,x_2,x_3,x_4,x_5,x_6\\ x_1+x_2+x_3 +x_4=n\\ x_1+x_5=2x_6}}(x_1x_2x_3x_4x_5x_6)^{-\gamma}\\ &\le & \sum_{x_1,x_2}(x_1x_2)^{-\gamma}\Big ( \sum_{x_5}(x_5(x_1+x_5)/2)^{-\gamma}\Big )\Big (\sum_{\substack{x_3,x_4\\ x_3+x_4=n-x_1-x_2}}(x_3x_4)^{-\gamma}\Big )\\
&{\stackrel{*}{\ll}} &\sum_{x_1,x_2}(x_1x_2)^{-\gamma}x_1^{1-2\gamma}(n-x_1-x_2)^{1-2\gamma}{\stackrel{*}{\ll}} \sum_{x_1}x_1^{1-3\gamma}(n-x_1)^{2-3\gamma}\ll n^{4-6\gamma}.
\end{eqnarray*}

\smallskip

\begin{eqnarray*}
S_3&\le &\sum_{\substack{x_1,x_2,x_3,x_4,x_5,x_6\\ x_1+x_2+x_3 +x_4=n\\ x_1+x_2=x_6+x_7}}(x_1x_2x_3x_4x_6x_7)^{-\gamma}\\ &\le & \sum_{x_1,x_2}(x_1x_2)^{-\gamma}\sum_{\substack{x_6,x_7\\x_6+x_7=x_1+x_2}}(x_6x_7)^{-\gamma}  \sum_{\substack{x_3,x_4\\x_3+x_4=n-x_1-x_2}}(x_3x_4)^{-\gamma} \\
&{\stackrel{*}{\ll}} &\sum_{x_1, x_2}(x_1x_2)^{-\gamma}(x_1+x_2)^{1-2\gamma}(n-x_1-x_2)^{1-2\gamma}\\
&\ll& \sum_l \sum_{\substack{x_1,x_2\\x_1+x_2=l}}(x_1x_2)^{-\gamma}l^{1-2\gamma}(n-l)^{1-2\gamma}
{\stackrel{*}{\ll}} \sum_l l^{2-4\gamma}(n-l)^{1-2\gamma}{\stackrel{*}{\ll}} n^{4-6\gamma}.\qquad
\end{eqnarray*}

\smallskip

\begin{eqnarray*}
S_4&\le &\sum_{\substack{x_1,x_2,x_3,x_4,x_5,x_6\\ x_1+x_2+x_3 +x_4=n\\ x_1+x_5=x_6+x_2}}(x_1x_2x_3x_4x_6x_7)^{-\gamma}\\ &\ll & \sum_{\substack{x_1,x_2\\x_1<x_2\\x_1+x_2<n}}(x_1x_2)^{-\gamma}\sum_{\substack{x_5,x_6\\x_5-x_6=x_2-x_1}}(x_6x_5)^{-\gamma}  \sum_{\substack{x_3,x_4\\x_3+x_4=n-x_1-x_2}}(x_3x_4)^{-\gamma} \qquad \qquad \qquad \\
&{\stackrel{*}{\ll}} &\sum_{\substack{x_1, x_2\\x_1<x_2\\x_1+x_2<n}}x_1^{-2\gamma}(x_2-x_1)^{1-2\gamma}(n-x_1-x_2)^{1-2\gamma}\\
&\ll &\sum_{x_1<n/2}x_1^{-2\gamma}(n-2x_1)^{3-4\gamma}\ll\sum_{x_1<n/2}x_1^{-2\gamma}(n/2-x_1)^{3-4\gamma} {\stackrel{*}{\ll}} n^{3-4\gamma}n^{1-2\gamma}.
\end{eqnarray*}
Thus, $S_2,S_3,S_4\ll n^{4-6\gamma}\ll n^{-\frac{2\varepsilon}{3+3\varepsilon}}\ll n^{\frac{-\varepsilon^2}{18}}$.
\end{proof}

\subsection{Acknowledgements} I am grateful to Igor Shparlinski for useful comments about Theorem \ref{GSZ} and to Joel Spencer and Prasad Tetali for provide me a copy of the paper \cite{ST} and some informations about Erd\H os's conjecture. I am also grateful to Rafa Tesoro for a carefully reading of the paper.

\end{document}